\documentclass[12pt]{amsart}
\usepackage{pgf,tikz,pgfplots} 
\pgfplotsset{compat=1.14}
\usepackage{mathrsfs}
\usetikzlibrary{arrows}
\usetikzlibrary{patterns}
\usepackage{pgfplots}
\usetikzlibrary{intersections, pgfplots.fillbetween}
\usepackage[colorlinks=true,urlcolor=blue, citecolor=red,linkcolor=blue,linktocpage,pdfpagelabels, bookmarksnumbered,bookmarksopen]{hyperref}
\usepackage[hyperpageref]{backref}
\usepackage{cleveref}
\usepackage{xcolor}
\usepackage{amsthm} 
\usepackage{latexsym,amsmath,amssymb}
\usepackage{accents}
\usepackage[colorinlistoftodos,prependcaption,textsize=tiny]{todonotes}
\usepackage{a4wide}
\usepackage{soul}
\usepackage{mathtools} 
\usepackage{xparse} 

\pgfdeclarelayer{ft}
\pgfdeclarelayer{bg}
\pgfsetlayers{bg,main,ft}

\title{On minimizing $W^{s,1/s}$-maps between circles}

\date{\today}
\author{Dorian Martino}
\address[Dorian Martino]{
	ETH Zürich,%
	Department of Mathematics,
	Rämistrasse 101,
	8092 Zürich, Switzerland}
\email{dorian.martino@math.ethz.ch}
\author{Katarzyna Mazowiecka}
 \address[Katarzyna Mazowiecka]{
	 Institute of Mathematics,%
	 University of Warsaw,
	 Banacha 2,
	 02-097 Warszawa, Poland}
 \email{k.mazowiecka@mimuw.edu.pl}
\author{Armin Schikorra}
 \address[Armin Schikorra]{Department of Mathematics,
	 University of Pittsburgh,
	 301 Thackeray Hall,
	 Pittsburgh, PA 15260, USA}
 \email{armin@pitt.edu}

\definecolor{indigo}{rgb}{0.29, 0.0, 0.51}
\definecolor{p1}{gray}{0.4}
\definecolor{p2}{gray}{0.6}
\definecolor{p3}{gray}{0.98}
\definecolor{p4}{gray}{0.8}
\definecolor{p5}{gray}{0.9}

plus 6pt minus 12pt
	\renewcommand{\i}{{\rm \bf i}}
	\def\eps{\varepsilon}

	\def\id{{\rm id }}

	\def\C{{\mathbb C}}

	\newcommand{\dif}{\,\mathrm{d}}
	
	\def\N{{\mathbb N}}

	\def\S{{\mathbb S}}

	\newtheorem{theorem}{Theorem}
	\newtheorem{lemma}[theorem]{Lemma}
	\newtheorem{corollary}[theorem]{Corollary}
	\newtheorem{proposition}[theorem]{Proposition}

	\newtheorem{definition}[theorem]{Definition}

	\def\lip{{\rm Lip\,}}

	\newcommand{\dd}{\,\mathrm{d}}
	\newcommand{\dx}{\dif x}
	\newcommand{\dy}{\dif y}

	\newcommand{\R}{\mathbb{R}}

	\newcommand{\Z}{\mathbb{Z}}

	\newcommand{\brac}[1]{\left (#1 \right )}
	\newcommand{\abs}[1]{\left |#1 \right |}

	\newcommand{\barint}{
		\rule[.036in]{.12in}{.009in}\kern-.16in \displaystyle\int }
	
	\newcommand{\barcal}{\mbox{$ \rule[.036in]{.11in}{.007in}\kern-.128in\int $}}
	
	\def\mvint_#1{\mathchoice
		{\mathop{\vrule width 6pt height 3 pt depth -2.5pt
				\kern -8pt \intop}\nolimits_{\kern -3pt #1}}%
	{\mathop{\vrule width 5pt height 3 pt depth -2.6pt
			\kern -6pt \intop}\nolimits_{#1}}%
	{\mathop{\vrule width 5pt height 3 pt depth -2.6pt
			\kern -6pt \intop}\nolimits_{#1}}%
	{\mathop{\vrule width 5pt height 3 pt depth -2.6pt
			\kern -6pt \intop}\nolimits_{#1}}}

\numberwithin{theorem}{section} \numberwithin{equation}{section}

\newcommand{\lap}{\Delta }
\newcommand{\aleq}{\precsim}

\let\latexchi\chi
\makeatletter
\renewcommand\chi{\@ifnextchar_\sub@chi\latexchi}
\newcommand{\sub@chi}[2]{
	\@ifnextchar^{\subsup@chi{#2}}{\latexchi^{}_{#2}}%
}
\newcommand{\subsup@chi}[3]{
	\latexchi_{#1}^{#3}%
}
\makeatother

\begin{document}
	
	\begin{abstract}
	For $s \in (\frac{1}{4},1)$ and any degree the only $W^{s,\frac{1}{s}}$-minimizers for $\S^1 \to \S^1$ maps are Blaschke products. This gives a resolution of Open Problems 23 and 24 in \cite{BM-book}, Brezis' favorite Open Problem 5.4 \cite{BFavorite} in this $s$-range. Previous results of this type were partial and restricted only to a small neighborhood of $s=\frac{1}{2}$.

In particular, we completely settle Open Problem 5.1 of Brezis' favorite problems, \cite{BFavorite}.
Moreover, as a consequence of the argument one also obtains linearized stability results.
\end{abstract}
	
	\keywords{Minimizing fractional harmonic maps, homotopy theory, regularity theory, existence}
	\sloppy
	
	\subjclass[2010]{58E20, 35B65, 35J60, 35S05}
	\maketitle
	\tableofcontents
	\sloppy

	\section{Introduction}
Let $1<p<\infty$ and let $s=\frac1p$. For maps $u\colon \S^1\to\S^1$, we define
\[
E_s(u)
\coloneqq 
\int_{\S^1}\int_{\S^1}
\frac{|u(x)-u(y)|^p}{|x-y|^2}\,\dx\,\dy.
\]
Here $\S^1$ is identified with the unit circle in $\C$, and $\dx,\dy$
denote arclength measure. For any $s \in (0,1)$ this energy is conformally invariant under domain M\"obius transformations. We are interested in minimizers of $E_s$ under the assumption that $u$ has a prescribed degree. Despite recent progress and new methods developed, for the history see, e.g., \cite{MP04,BMRS14,Sucks23,MS23,DSW23,MMS26}, the following questions were open except for $s \approx \frac{1}{2}$, cf. \cite[Open Problem 23, 24]{BM-book}:

\begin{itemize}
 \item Does there exist a minimizer for all degrees $d\in\mathbb Z$?
 \item What is the minimum energy for each degree $d\in\mathbb Z$?
\item What are the minimizers for each degree?
 \end{itemize}

It is very natural to believe that for $s \in (0,1)$ the maps $z^d\colon \S^1 \to \S^1$ are the minimizers of degree $d$. But quite surprisingly, that is wrong.
\begin{theorem}[\cite{MMS26}]
For $s \in (0,\frac{1}{8})$, the identity is not a degree-one minimizer of $E_s$.
\end{theorem}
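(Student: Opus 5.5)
To show the identity is not a degree-one minimizer for small $s$, I will exhibit an explicit competitor $v$ of degree one with $E_s(v)<E_s(\id)$ whenever $s\in(0,\frac18)$. The guiding heuristic is that for $p=\frac1s$ large the integrand $|u(x)-u(y)|^p$ strongly penalizes large chordal jumps and rewards small ones. The identity spreads its jumps uniformly: chord $|x-y|$ is mapped to chord $|x-y|$. A map that moves quickly through a short arc and lingers on the rest of the circle keeps most pairs $(x,y)$ with $|v(x)-v(y)|$ small, and $t^p$ is tiny for $t<1$ when $p$ is large. This is plausible because the weight $|x-y|^{-2}$ is scale invariant, so concentrating the degree is cheap in the conformal part but beneficial in the $p$-part.

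\textbf{Step 1: compute the reference energy.} Write $x=e^{i\alpha}$, $y=e^{i\beta}$, so that $|x-y|=2|\sin\frac{\alpha-\beta}{2}|$. Then
\[
E_s(\id)=2\pi\int_0^{2\pi}\Big(2\big|\sin\tfrac\theta2\big|\Big)^{p-2}\dif\theta .
\]
This is a Beta integral, of size roughly $2^{p}\,p^{-1/2}$ up to constants. The maximal chord $2$ is attained on a set of $\theta$ of width about $p^{-1/2}$ around $\theta=\pi$.

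\textbf{Step 2: choose a family of competitors.} I would use degree-one maps of the form $v(e^{i\alpha})=e^{i\phi(\alpha)}$ with $\phi$ monotone of total increase $2\pi$. A concrete choice is a two-step profile that moves quickly on two short arcs, placed antipodally so that the map is symmetric, and moves slowly elsewhere. The test I would compute first is the "degenerate" map that essentially jumps by $\pi$ twice: $v$ is close to $1$ on one half-circle and close to $-1$ on the other, with smooth transitions over short arcs of length $\delta$.

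\textbf{Step 3: compare the energies.} For such a $v$, pairs with $x,y$ in the same plateau contribute almost nothing. Pairs in opposite plateaus contribute $2^p\iint |x-y|^{-2}$. That integral diverges logarithmically near the transition points, which is exactly why some transition width $\delta$ is needed, so the opposite-plateau contribution is of order $2^p\log(1/\delta)$. The transition regions contribute a term behaving like the energy of a half-degree profile, which by conformal invariance is of order $2^p$ times a constant.

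\textbf{Main obstacle.} This is where the competition is delicate. Both energies carry the common factor $2^p$, so the comparison comes down to the identity's gain of $p^{-1/2}$ from the narrow band of near-maximal chords against the competitor's logarithmic loss. A naive two-plateau map may therefore lose. I would instead optimize a one-parameter Möbius-type family, $\phi$ chosen so that $v$ is a Blaschke factor composed with a power-like distortion, and evaluate $E_s(v)/E_s(\id)$ asymptotically by Laplace's method. The plan is to locate where the chord distribution of $v$ puts less mass near the maximal chord $2$ than the identity's does. The threshold $\frac18$ should then emerge from comparing the exponents in these Laplace asymptotics, and the final step would be a careful, possibly numerical, verification that the ratio stays below $1$ throughout $s<\frac18$.
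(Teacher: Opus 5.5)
The paper only quotes this result from \cite{MMS26}, so there is no proof of it here to compare against. Judged on its own, your proposal has a genuine gap: it never produces a competitor $v$ with $E_s(v)<E_s(\id)$, and the route you sketch cannot reach the stated range. You discard the two-plateau map yourself, and its replacement is never specified. The Möbius parameter of your family gains nothing, because every Blaschke factor has exactly the energy of the identity by conformal invariance, so everything would rest on an undefined ``power-like distortion''. More importantly, Laplace asymptotics are statements as $p\to\infty$. At best they give non-minimality for $p\ge p_0$ with an inexplicit $p_0$. For instance, three plateaus at the cube roots of unity with transitions of width $\delta=(3/4)^{p/2}$ give $E_s(v)\aleq 3^{p/2}p$, which beats $E_s(\id)\approx 2^p p^{-1/2}$ for large $p$. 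Nothing in such asymptotics singles out $p=8$, and a numerical check cannot cover the unbounded range $p\in(8,\infty)$. The number $\frac18$ is not a large-$p$ effect: it is where the identity becomes linearly unstable, so the relevant competitors are small perturbations of the identity, not large deformations.

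With the paper's own tools the argument is short. Let $u_\eps(e^{\i t})=e^{\i(t+\eps\cos 3t)}$, a smooth degree-one map. By the Jacobi--Anger expansion, $u_\eps^n=\sum_{m\in\Z}\i^m J_m(n\eps)e^{\i(n+3m)t}$, hence
\[
D(u_\eps^n)=\sum_{j>n/3}(3j-n)\,J_j(n\eps)^2 .
\]
This gives $D(u_\eps)=\frac{\eps^2}{2}+O(\eps^4)$ and $D(u_\eps^2)=\eps^2+O(\eps^4)$. For $n\ge3$ only $j\ge2$ occurs, so $0\le D(u_\eps^n)\le Cn^4\eps^4$. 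Insert this into Proposition~\ref{pr:Esvsid}, and sum the errors using \eqref{eq:nsquaregamman} with $\sigma=4<p$. Since $\gamma_{p,2}/\gamma_{p,1}=\frac{2-p}{p+4}$ and $\gamma_{p,1}=\frac{2\Gamma(p+1)}{\Gamma(2+\frac p2)\Gamma(\frac p2)}>0$, this yields
\[
E_s(u_\eps)-E_s(\id)=2\pi^2\eps^2\left(\gamma_{p,1}+2\gamma_{p,2}\right)+O(\eps^4)=2\pi^2\gamma_{p,1}\,\frac{8-p}{p+4}\,\eps^2+O(\eps^4).
\]
For $p>8$, i.e.\ $s<\frac18$, this is negative for small $\eps$. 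So the identity is not even a local minimizer, and the sign change of this third-mode coefficient at $p=8$ is exactly where the threshold $\frac18$ comes from.
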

In \cite{MMS26} a stability machinery was developed that in particular showed that for $s \approx \frac{1}{2}$ all degree one minimizers are M\"obius maps.
Here, we settle the above three questions in the range $(\frac{1}{4},1)$.
Before we state our main theorem, we define the Blaschke products which serve as natural candidate for minimizers of degree $d$, since the energy is conformally invariant.

\begin{definition}[Blaschke products]
For $d\ge1$, let $\mathscr{B}_d^+$ denote the family of degree $d$ finite
Blaschke products:
\[
\mathscr{B}_d^+
=
\left\{
e^{\i\alpha}
\prod_{j=1}^d
\frac{z-a_j}{1-\overline{a_j}z}
:
\alpha\in\R,\ a_1,\ldots,a_d\in\mathbb{D}
\right\}.
\]
For $d\le -1$, define
\[
\mathscr{B}_d^-
=
\left\{
\overline{B}\colon B\in \mathscr{B}_{|d|}^+
\right\}.
\]
\end{definition}

The following is our main result: all minimizers are Blaschke Products of their respective degree.

\begin{theorem}\label{th:blaschkemin}
Let $1<p\le4$, $s=\frac1p$, and let $d \in \Z$.
If $u \in W^{s,\frac{1}{s}}(\S^1,\S^1)$ has degree $d$, then
\[
E_s(u)\ge |d|E_s(\id) = |d| 2^p\pi^{3/2}
\frac{
\Gamma\left(\frac{p-1}{2}\right)
}{
\Gamma\left(\frac p2\right)
}.
\]
Moreover, equality holds if and only if the following alternatives hold:
\begin{itemize}
 \item If $d > 0$ then $u\in \mathscr{B}_d^+$
 \item If $d < 0$ then $u\in \mathscr{B}_d^-$
 \item if $d=0$ then $u$ is constant.
\end{itemize}
\end{theorem}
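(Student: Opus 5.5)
We will work with Fourier series: write $u=\sum_{k\in\Z}\hat u(k)z^k$ with $\sum_k|\hat u(k)|^2=1$ (since $|u|=1$) and degree $d=\sum_k k|\hat u(k)|^2$. The key observation is that for $p\ge 2$ we have $|u(x)-u(y)|\le 2$, so we can compare $|u(x)-u(y)|^p$ with powers of $|u(x)-u(y)|^2$. We will set $t=|u(x)-u(y)|^2/4\in[0,1]$ and write $|u(x)-u(y)|^p=2^p t^{p/2}$. We then look for an inequality
\[
t^{p/2}\ \ge\ \sum_{m\ge1} c_m\, \big(\text{something polynomial in } u(x)\overline{u(y)}\big)
\]
whose right-hand side, after integrating against the conformally invariant kernel $|x-y|^{-2}$, can be computed exactly in Fourier variables. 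Concretely, $|u(x)-u(y)|^2 = 2-2\,\mathrm{Re}\,u(x)\overline{u(y)}$. Expanding $(1-\tau)^{p/2}$ with $\tau=\mathrm{Re}\,u(x)\overline{u(y)}$ (so $t=(1-\tau)/2$) as a power series in $\tau$ gives coefficients of alternating or fixed sign according to $p$. The plan is to use a \emph{pointwise lower bound} $(1-\tau)^{p/2}\ge a(1-\tau)+b(1-\tau^2)+\dots$ that is tight at $\tau=\pm1$ and is valid exactly in the range $p\le4$; the threshold $s>\frac14$ should come out of it.

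Each term $\iint (1-\mathrm{Re}(u(x)\overline{u(y)})^m)|x-y|^{-2}$ is then a Gagliardo-type $H^{1/2}$ quantity of $u^{m}$, $\bar u^m$, or $|u|^{2}$-products. Using the identity $\iint|f(x)-f(y)|^2|x-y|^{-2}=c\sum_k|k||\hat f(k)|^2$, we will bound it below by $c\,|\deg(u^m)|=c\,m|d|$, via $\sum|k||\hat f(k)|^2\ge|\sum k|\hat f(k)|^2|=|\deg f|$ for $|f|=1$. Summing with the chosen coefficients gives $E_s(u)\ge C_p|d|$. Since for $u=\id$ every intermediate inequality is an equality (the Fourier support of $z^m$ is a single positive frequency), we get $C_p=E_s(\id)$, and the explicit Gamma value follows from computing $E_s(\id)=2\pi\int_0^{2\pi}|2\sin(\theta/2)|^{p-2}d\theta$.

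For $1<p<2$ we cannot use the same expansion, so we will rather use concavity: $t^{p/2}\ge$ a combination that majorizes from below through the chord. This may require a separate elementary inequality.

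For the equality case, equality in $\sum|k||\hat f(k)|^2\ge|\deg f|$ forces $\hat u(k)=0$ for $k<0$ (if $d>0$), i.e.\ $u$ is the boundary value of a holomorphic inner function in $W^{s,1/s}\subset$ VMO. Finite-energy inner functions with no singular part are finite Blaschke products, and the degree count gives exactly $d$ zeros. For $d<0$ we conjugate, and $d=0$ forces $\sum|k||\hat u(k)|^2=0$, so $u$ is constant.

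The main obstacle is the pointwise polynomial inequality in $\tau\in[-1,1]$ relating $(1-\tau)^{p/2}$ to terms whose integrals are controlled by degrees with equality for $\id$. It must hold simultaneously at the correct constant for all $p\le4$. The most likely failure is that such an inequality only holds for part of the range, so we will first try the two-term ansatz $\alpha(1-\tau)+\beta(1-\tau^2)$, matching the equality constant.
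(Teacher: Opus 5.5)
Your framework cannot reach the sharp constant for $2<p\le 4$, and the obstruction is structural. Any pointwise bound in $\tau=\mathrm{Re}\,u(x)\overline{u(y)}=\cos\theta$ can be rewritten as $|e^{i\theta}-1|^p\ge\sum_m c_m(1-\cos m\theta)$. After integration your chain then reads
\[
E_s(u)\ge\sum_m\frac{c_m}{2}E_{1/2}(u^m)\ge 2\pi^2|d|\sum_m c_m m ,
\]
and the second step needs $c_m\ge0$. Test this on $u=\id$: there $x\bar y$ sweeps all of $\S^1$ against the weight $|x-y|^{-2}$. So $2\pi^2\sum_m c_m m=E_s(\id)$ holds only if the pointwise bound is an identity in $\theta$. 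Tightness at $\tau=\pm1$ is not enough, and your claim that for $\id$ ``every intermediate inequality is an equality'' fails otherwise. By uniqueness of Fourier coefficients this forces $c_m=\gamma_{p,m}$, the coefficients of the paper's expansion $|a-b|^p=\frac12\sum_n\gamma_{p,n}|a^n-b^n|^2$; your binomial series in $\tau$, rewritten in terms of $\cos m\theta$, is this same expansion. But for $2<p<4$ one has $\gamma_{p,1}>0$ and $\gamma_{p,n}<0$ for every $n\ge2$, and at $p=4$ already $(1-\tau)^2=2(1-\tau)-(1-\tau^2)$. So no ansatz with nonnegative weights can be sharp, and the bound $E_{1/2}(u^n)\ge 4\pi^2 n|d|$ points the wrong way for every $n\ge2$. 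Your treatment of $1<p<2$ is backwards: there all $\gamma_{p,n}>0$, so the expansion works verbatim (this is the paper's argument). A concavity/chord bound such as $t^{p/2}\ge t$, by contrast, is strict on $(0,1)$ and loses the constant.

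The missing idea is an \emph{upper} bound on the higher powers. Write $E_{1/2}(u^n)=4\pi^2 nd+8\pi^2D(u^n)$ with the defect $D$. The paper proves $D(u^n)\le n^2D(u)$ by comparing the harmonic extension of $u^n$ with the competitor $V^n$, where $V$ is the harmonic extension of $u$, using $|V|\le1$ and $\int_{\mathbb D}|\partial_{\bar z}V|^2=\pi D(u)$. It combines this with the moment identity $\sum_n n^2\gamma_{p,n}=0$, which holds for $p>2$ because the second derivative of $|e^{i\theta}-1|^p$ vanishes at $\theta=0$. Together these give
\[
E_s(u)-E_s(z^d)=4\pi^2\sum_{n\ge2}\gamma_{p,n}\bigl(D(u^n)-n^2D(u)\bigr)\ge0,
\]
since each term is a product of two nonpositive factors. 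The sign pattern $\gamma_{p,n}\le0$ for $n\ge2$ is exactly what fails beyond $p=4$.

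Your equality case also needs more. For $s<\frac12$ a map in $W^{s,1/s}$ need not lie in $H^{1/2}$, so $\sum_k|k||\hat u(k)|^2$ may be infinite. The paper first shows that minimizers lie in $H^{1/2}$ (H\"older regularity plus a Nikol'skii bootstrap, for $s>\frac14$). It then obtains rigidity from $D(u^2)=4D(u)$ and the equality case of the defect inequality, not from equality in $\sum_k|k||\hat f(k)|^2\ge|\deg f|$.
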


\Cref{th:blaschkemin} in particular solves Brezis' favorite problem 5.1, cf. \cite[Open Problem 5.1]{BFavorite} and also \cite[Remark 7]{BrezisNewQuestions}. Indeed, we have by \Cref{la:brezis51} and \Cref{th:blaschkemin}
\begin{equation}\label{eq:brezis51}
|\deg u| \leq \frac{p-1}{4\pi}\, E_{1/p}(u), \qquad \forall p \in (1,2).
\end{equation}

\begin{corollary}
For any $s \in (\frac{1}{4},1)$ and any degree $d \in \Z$ the energy $E_s$ attains its minimum at exactly $z^d$ modulo Blaschke products.
\end{corollary}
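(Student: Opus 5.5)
The corollary is essentially a restatement of \Cref{th:blaschkemin} in terms of existence and identification of minimizers, so the plan is to read it off from the theorem with $p = \frac1s \in (1,4)$. Fix $s \in (\frac14,1)$ and $d\in\Z$, and let $X_d$ be the set of $u \in W^{s,\frac1s}(\S^1,\S^1)$ of degree $d$. This class is well defined since $s \cdot \frac1s = 1$, which is the critical (VMO) case where degree makes sense.

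\textbf{Lower bound.} \Cref{th:blaschkemin} gives $E_s(u) \ge |d|\,E_s(\id)$ for every $u\in X_d$.

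\textbf{Attainment.} I check that $z^d$ achieves $|d|E_s(\id)$.
\begin{itemize}
\item For $d>0$, $z^d\in\mathscr{B}_d^+$ (take all $a_j=0$ and $\alpha=0$), so the equality case of \Cref{th:blaschkemin} applies.
\item Even without invoking the equality characterization, one can compute directly with $|x^d-y^d| = |x-y|\,\frac{|x^d-y^d|}{|x-y|}$. I will not need that computation: the ``if'' direction of the theorem already says $E_s(B)=|d|E_s(\id)$ for every $B\in\mathscr{B}_d^+$.
\item For $d<0$, $E_s(\overline u)=E_s(u)$ because complex conjugation is an isometry of $\C$. Hence $z^{d}=\overline{z^{|d|}}\in\mathscr B_d^-$ also attains the bound.
\item For $d=0$, constants give $E_s=0=|d|E_s(\id)$.
\end{itemize}
So the minimum of $E_s$ over $X_d$ exists and equals $|d|E_s(\id)$, attained at $z^d$.

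\textbf{Uniqueness modulo Blaschke products.} Any minimizer $u$ satisfies equality, so by \Cref{th:blaschkemin} it lies in $\mathscr B_d^\pm$ (or is constant if $d=0$). To phrase this as ``exactly $z^d$ modulo Blaschke products'', I interpret it via Möbius invariance: each $B\in\mathscr B_d^+$ has the same energy as $z^d$, and the minimizers are precisely this family.
\begin{itemize}
\item For $d=\pm1$, the family is exactly $\{e^{\i\alpha}\,z\circ\phi\}$ and its conjugates, with $\phi$ a disc automorphism. So the minimizers are $z^{\pm1}$ up to rotations and domain Möbius transformations, i.e.\ the symmetry group of $E_s$.
\item For $|d|\ge2$, a general Blaschke product is not of the form $z^d\circ\phi$. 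Here the statement should be read literally: minimizers are precisely the Blaschke products of degree $d$, with $z^d$ as a representative.
\end{itemize}

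\textbf{Main obstacle.} There is no real analytic obstacle once \Cref{th:blaschkemin} is available. The only points needing care are:
\begin{itemize}
\item the endpoint bookkeeping, namely that $s\in(\frac14,1)$ corresponds to $p\in(1,4)\subset(1,4]$;
\item confirming that the equality case of the theorem includes the ``if'' direction, so that all Blaschke products are indeed minimizers.
\end{itemize}
If the ``if'' direction were unclear, I would verify it directly. Write $B$ as a product of Möbius factors and use that for a Blaschke product $|B(x)-B(y)|^p/|x-y|^2$ integrates to $d$ times the identity energy. For this I would use the pullback of the kernel $|x-y|^{-2}\,dx\,dy$ under the $d$-to-$1$ covering together with the conformal invariance of $E_s$.
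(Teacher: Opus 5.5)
Your proposal is correct and follows the paper. The paper gives no separate proof of this corollary; it reads it off from \Cref{th:blaschkemin} exactly as you do: the lower bound, attainment at $z^d$, and the identification of all minimizers as elements of $\mathscr{B}_d^\pm$ (or as constants when $d=0$). The ``if'' direction you were unsure about is supplied in the paper by \Cref{pr:Esvsid}: every power $B^n$ of a degree-$d$ Blaschke product is again a Blaschke product, so $D(B^n)=0$ and hence $E_s(B)=E_s(z^d)=|d|E_s(\id)$ by \Cref{la:energofzk}. Your backup pullback computation is therefore unnecessary.
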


Essentially as a corollary we also obtain local and global stability results in the $\dot{H}^{\frac{1}{2}}$-norm, i.e.\ the analogue of \cite[Theorem 1.3 and 1.5]{DSW23}.
\begin{theorem}
\label{s:stability}
Let $s\in(\frac12,1)$ and set $p=\frac1s\in(1,2)$. Let
$u\in W^{s,\frac1s}(\S^1,\S^1)$.
\begin{enumerate}
\item If $\deg u=1$, then there exists a constant $C_p>0$ such that
\[
\inf_{B_0\in{\rm B}_1^+}
\|u-B_0\|_{\dot H^{\frac{1}{2}}(\S^1)}^2
\le
C_p\, D(u),
\]
where $D(u)$ is the defect measure defined in \Cref{def:defect}.

\item If $\deg u=d\ge2$, then there exist constants
$\eps_{d,p}>0$ and $C_{d,p}>0$ such that, whenever
\[
\inf_{B_0\in{\rm B}_d^+}
\|u-B_0\|_{\dot H^{\frac{1}{2}}}^2
< \eps_{d,p},
\]
one has
\[
\inf_{B_0\in{\rm B}_d^+}
\|u-B_0\|_{\dot H^{\frac{1}{2}}}^2
\le
C_{d,p}\, D(u).
\]
\end{enumerate}
\end{theorem}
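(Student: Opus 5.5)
The plan is to reduce both statements to the $\dot H^{\frac12}$-stability results of \cite[Theorem 1.3 and 1.5]{DSW23}. The key intermediate estimate is a comparison of deficits:
\begin{equation}\label{eq:plan-defect-comparison}
D(u) \;\ge\; c_p\, \Big( \|u\|_{\dot H^{\frac12}(\S^1)}^2 - |d|\, \|\id\|_{\dot H^{\frac12}(\S^1)}^2 \Big), \qquad c_p>0,
\end{equation}
valid for every $u\in W^{s,\frac1s}(\S^1,\S^1)$ of degree $d\ge 1$ when $p\in(1,2)$. Here $D(u)$ is the defect of \Cref{def:defect}, which I expect to be (a normalization of) $E_s(u)-|d|E_s(\id)$. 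The right-hand side of \eqref{eq:plan-defect-comparison} is exactly the $\dot H^{\frac12}$-deficit controlled in \cite{DSW23}. Once \eqref{eq:plan-defect-comparison} holds:
\begin{itemize}
\item part (1) follows from the global degree-one estimate $\inf_{B_0}\|u-B_0\|^2_{\dot H^{1/2}}\lesssim \delta_{1/2}(u)$ of \cite[Theorem 1.3]{DSW23};
\item part (2) follows from the local estimate of \cite[Theorem 1.5]{DSW23}, whose smallness hypothesis is literally our assumption $\inf_{B_0}\|u-B_0\|^2_{\dot H^{1/2}}<\eps_{d,p}$.
\end{itemize}
In both cases the constants of \cite{DSW23} are divided by $c_p$.

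To prove \eqref{eq:plan-defect-comparison} I would reuse the mechanism of the proof of \Cref{th:blaschkemin}. Write $|u(x)-u(y)|^2 = 2-2\,\mathrm{Re}\,(u(x)\overline{u(y)})$. Then expand $|u(x)-u(y)|^p=2^{p/2}(1-\mathrm{Re}\,w)^{p/2}$, with $w=u(x)\overline{u(y)}$, in a series in powers of $\mathrm{Re}\,w$, or equivalently in the quantities $|u(x)^k-u(y)^k|^2$. This expresses $E_s(u)-|d|E_s(\id)$ as a series $\sum_k a_k(p)\,\Delta_k(u)$. Each $\Delta_k(u)$ is a quadratic $\dot H^{\frac12}$-type deficit, nonnegative by the degree constraint via the Fourier inequality $\sum_n |n||\hat v(n)|^2\ge |\deg v|$. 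The main theorem is proved by showing that all the relevant coefficients are nonnegative in the range $p\le 4$.

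For the stability statement one needs slightly more than nonnegativity. The first term must have a strictly positive coefficient, $a_1(p)>0$, and $\Delta_1(u)$ must coincide with the $\dot H^{\frac12}$-deficit of $u$ itself. Discarding all the other (nonnegative) terms then gives \eqref{eq:plan-defect-comparison} with $c_p=a_1(p)$.

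The main obstacle is this positivity step. I need to check that for $p\in(1,2)$, where $t\mapsto t^{p/2}$ is strictly concave, the coefficient in front of the pure $\dot H^{\frac12}$-deficit is strictly positive and not merely nonnegative. A naive chord bound $|u(x)-u(y)|^p\ge 2^{p-2}|u(x)-u(y)|^2$ is not sharp at $\id$, so it cannot be used; the sharp decomposition is needed. I would first verify $a_1(p)>0$ by explicit Gamma-function formulas for the coefficients, comparing with the value of $E_s(\id)$ in \Cref{th:blaschkemin}. If that fails, the fallback is to keep a small multiple $\eps$ of the $\dot H^{\frac12}$-deficit and absorb it using the slack in the higher-order terms.
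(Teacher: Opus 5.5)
This is essentially the paper's argument. The paper also combines the expansion $E_s(u)-E_s(z^d)=4\pi^2\sum_{n\ge1}\gamma_{p,n}D(u^n)$, with all $\gamma_{p,n}>0$ for $p\in(1,2)$ (\Cref{pr:Esvsid}, \Cref{co:est}), with \cite{DSW23} in the form of \Cref{la:B-controls-Blaschke}. It passes to general $u$ by smooth approximation and \Cref{la:continuityBuH12}, which is available since $W^{s,1/s}\subset H^{1/2}$ for $s>\frac12$.

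One correction: $D(u)$ in \Cref{def:defect} is not the $E_s$-deficit. It is the Fourier defect $\sum_{k<0}|k|\,|a_k|^2$, which by \eqref{eq:hhvsdefect} equals $\frac{1}{8\pi^2}\bigl(E_{1/2}(u)-4\pi^2 d\bigr)$. So for the statement as written, your key estimate is just this identity, and the conclusion is already the \cite{DSW23} step applied to $u\in H^{1/2}$. The positivity of $\gamma_{p,1}$, which you single out as the main obstacle, is not needed for the literal claim. What it buys, exactly as \Cref{co:est} does in the paper, is converting the bound into one in terms of $E_s(u)-E_s(z^d)$.

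Separately, your side remark that all coefficients are nonnegative up to $p=4$ is false for $2<p<4$, where $\gamma_{p,n}<0$ for $n\ge2$. It plays no role here.
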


\subsection*{Outline}
In \Cref{s:preliminaries} we collect some preliminary formulae.

The main idea is to compare the energy $E_s$ with the energy $E_{1/2}$.

The $L^2$-energy $E_{1/2}$ admits an (already known) explicit decomposition into a degree term and a defect measure $D(u)$, see \Cref{la:E12vsdefect} and \Cref{def:defect}.
The main novelty is that --- despite the non-Hilbertian structure of $W^{s,\frac{1}{s}}(\S^1,\R^2)$ --- we obtain the explicit relation
\[
E_s(u)-\, E_s(z^{\deg u})
=
4\pi^2
\sum_{n=1}^{\infty}\gamma_{\frac{1}{s},n}D(u^n).
\]
where $D(\cdot)$ is the old $H^{1/2}$-defect and $\gamma_{p,n}$ are computable constants, see \Cref{pr:Fourierexpansion} and \Cref{pr:Esvsid}.
This elegant and surprising formula provides the main mechanism used in
\Cref{s:proofs} to prove \Cref{th:blaschkemin} and \Cref{s:stability} -- using the regularity theory we discuss in \Cref{s:regularity}.
\subsection*{Acknowledgement}

Part of this work was carried out while K.M. and A.S. were visiting University of Bielefeld. Further mutual visits between the author's institutions are gratefully acknowledged. A.S. was an Alexander-von-Humboldt Fellow. A.S. is funded by NSF Career DMS-2044898. D.M. is funded by Swiss National Science Foundation, project SNF $200020\textunderscore 219429$.

The project is co-financed by the Polish National Agency for Academic Exchange within Polish Returns Programme - BPN/PPO/2021/1/00019/U/00001 (K.M.). The project is co-financed by National Science Centre grant 2022/01/1/ST1/00021 (K.M.).

\textbf{Usage of LLM:} The authors used chatgpt to assist with conceptualization and computations. All mathematical validation, final proof decisions, and final wording remain the sole responsibility of the human authors.

\subsection*{Notation}We shall use the notation $A\aleq B$ to indicate that there exists a constant $C>0$, independent of the relevant parameters, such that $A\leq C B$. We write $A\approx B$ if both $A\aleq B$ and $B\aleq A$ hold.

\section{Preliminaries}\label{s:preliminaries}

We begin our preliminary section by recalling well-known energy formula
\begin{lemma}[{\cite{MMS26}}]\label{la:energofzk}
	It holds
	\[
	E_\frac{1}{2}(z^k) = 4\pi^2 |k|.
	\]
	More generally, we have
	\[
	E_{s}(z^k) = |k| E_s(z).
	\]
\end{lemma}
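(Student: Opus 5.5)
We prove both claims by parametrizing the circle and reducing the double integral to a single integral over the angle difference. Write $x=e^{\i\theta}$, $y=e^{\i\varphi}$ with $\theta,\varphi\in[0,2\pi)$, so that $\dx\,\dy=\dif\theta\,\dif\varphi$ and $|x-y|=2|\sin\frac{\theta-\varphi}{2}|$. For $u=z^k$ we have $|u(x)-u(y)|=|e^{\i k\theta}-e^{\i k\varphi}|=2|\sin\frac{k(\theta-\varphi)}{2}|$. Since the integrand depends only on $t=\theta-\varphi$ modulo $2\pi$, Fubini and periodicity give
\[
E_s(z^k)=2\pi\int_0^{2\pi}\frac{2^p|\sin\frac{kt}{2}|^p}{4\sin^2\frac t2}\,\dif t .
\]
The case $k=0$ is trivial. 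For $k<0$ we use $|\sin\frac{kt}{2}|=|\sin\frac{|k|t}{2}|$, so we may assume $k\ge1$.

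\textbf{The case $s=\frac12$, $p=2$.} Here we need $\int_0^{2\pi}\frac{\sin^2(kt/2)}{\sin^2(t/2)}\,\dif t$. This is $2\pi$ times the mean of the Fejér-type kernel, since $\frac{\sin^2(kt/2)}{\sin^2(t/2)}=|\sum_{j=0}^{k-1}e^{\i jt}|^2$. By Parseval its integral is $2\pi k$. Plugging in,
\[
E_{\frac12}(z^k)=2\pi\cdot\frac{4}{4}\cdot 2\pi k=4\pi^2|k|.
\]

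\textbf{The general identity $E_s(z^k)=|k|E_s(z)$.} This is the main point, because for $p\neq2$ there is no Parseval identity. We instead exploit the covering structure $z\mapsto z^k$. Substitute $t=\tau/k$, i.e.\ in the double integral change variables to $X=x^k$, $Y=y^k$. This map is a $k$-fold covering of $\S^1$ with $|\dif X|=k\,|\dif x|$. Under it the numerator $|x^k-y^k|^p$ becomes $|X-Y|^p$.

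The key fact needed is a transfer identity for the denominator: for fixed $X\neq Y$, summing $|x-y|^{-2}$ over the $k^2$ preimage pairs gives
\[
\sum_{x^k=X,\;y^k=Y}\frac{1}{|x-y|^2}=\frac{k^2}{|X-Y|^2}.
\]
This follows from the classical identity $\sum_{\omega^k=1}\frac{1}{|1-\omega a|^2}=\frac{k^2\,\cdots}{\cdots}$. Concretely, $\sum_{j=0}^{k-1}\csc^2\!\big(\frac{\alpha+2\pi j}{k}\cdot\frac12\big)=k^2\csc^2\frac{\alpha}{2}$, which is the partial-fraction expansion of $\csc^2$, or equivalently the derivative of $\sum_j\cot(\cdot)=k\cot(k\cdot)$. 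Fix $x$ among the $k$ preimages of $X$ and sum over the $k$ preimages $y$ of $Y$. The inner sum then equals $k^2/|X-Y|^2$ after accounting for the factor $4$ in $|x-y|^2=4\sin^2$. Summing over the $k$ choices of $x$ gives $k\cdot k^2/|X-Y|^2$, and the Jacobian $k^{-2}$ converts this back.

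Collecting factors yields
\[
E_s(z^k)=\frac{1}{k^2}\cdot k\cdot k^2\int\!\!\int\frac{|X-Y|^p}{|X-Y|^2}\,\dif X\,\dif Y\cdot\frac1k\cdot k,
\]
which reduces to $kE_s(z)$. The bookkeeping here is where an error is most likely, so we check it against the $p=2$ computation above, which it must reproduce.

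Alternatively, one can do this purely in the one-variable form. Splitting $[0,2\pi]$ into $k$ pieces and substituting $\tau=kt-2\pi j$, the integral becomes $\frac1k\int_0^{2\pi}2^p|\sin\frac\tau2|^p\sum_j\frac{1}{4\sin^2(\frac{\tau+2\pi j}{2k})}\,\dif\tau$. The same cosecant identity turns the sum into $\frac{k^2}{4\sin^2(\tau/2)}$, giving $k$ times the $k=1$ integral.

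The main obstacle is verifying the cosecant summation identity and keeping track of the constants. The identity follows by differentiating $\prod_j\sin\big(\theta+\frac{\pi j}{k}\big)=2^{1-k}\sin(k\theta)$ twice logarithmically.
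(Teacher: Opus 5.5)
The paper gives no proof of this lemma; it is quoted from \cite{MMS26}, so there is no internal argument to compare with. Your proof is correct. The one-variable computation is complete: the reduction to a single integral in $t=\theta-\varphi$, the splitting of $[0,2\pi]$ into $k$ pieces, and the identity
$\sum_{j=0}^{k-1}\csc^2\bigl(\frac{\tau+2\pi j}{2k}\bigr)=k^2\csc^2\frac{\tau}{2}$
give $E_s(z^k)=|k|E_s(z)$ directly. Tonelli applies because everything is nonnegative, and $|\sin\frac{\tau}{2}|^{p-2}$ is integrable for $p>1$. Your Parseval step is also fine but not needed. For $p=2$ and $k=1$ the reduced integrand is identically $1$, so $E_{1/2}(z)=4\pi^2$, and the first claim follows from the second.

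There is one slip in the two-variable version. The displayed transfer identity is wrong as written. Summing $|x-y|^{-2}$ over all $k^2$ preimage pairs gives $k^3/|X-Y|^2$, not $k^2/|X-Y|^2$. The value $k^2/|X-Y|^2$ is the sum over the $k$ preimages $y$ with $x$ \emph{fixed}. Your next sentence in fact uses the correct count $k\cdot k^2$, and together with $\dif x\,\dif y=k^{-2}\dif X\,\dif Y$ this yields the factor $k$. The extra $\frac1k\cdot k$ in your ``collecting factors'' display is spurious. I would present only the one-variable argument, which is clean and self-contained.

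There is also an alternative route using the paper's own tools. With \Cref{la:PointwisePowerIdentity} one gets
$E_s(z^k)=\frac12\sum_{n\ge1}\gamma_{p,n}E_{1/2}(z^{kn})=2\pi^2|k|\sum_{n\ge1}n\gamma_{p,n}=|k|E_s(z)$.
The series converges absolutely by \eqref{eq:nsquaregamman} with $\sigma=1<p$, and the argument is not circular, since \Cref{la:PointwisePowerIdentity} does not use this lemma. That route needs the $p=2$ formula for every degree (your Parseval computation) plus the coefficient estimates of \Cref{pr:Fourierexpansion}. Your covering/cosecant argument needs neither, treats all $p\in(1,\infty)$ at once, and recovers the $p=2$ case for free.
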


\begin{lemma}[{\cite[Chapter 5]{NIST}, Basic facts about the Gamma function}]\label{la:GammaFacts}
	For $x>0$, the Gamma function is defined by
	\[
	\Gamma(x)\coloneqq \int_0^\infty t^{x-1}e^{-t}\,\dd t.
	\]
	It satisfies
	\[
	\Gamma(x+1)=x\Gamma(x)
	\qquad\text{for every }x>0.
	\]
	In particular for $n \in \N$
	\begin{equation}\label{eq:prodformula}
		\prod_{j=1}^n (z+j-1)
		=
		\frac{\Gamma(z+n)}{\Gamma(z)},
		\qquad
		\prod_{j=1}^n (z-j)
		=
		\frac{\Gamma(z)}{\Gamma(z-n)}.
	\end{equation}
	\begin{equation}\label{eq:Gamma12}
		\Gamma(1/2) = \sqrt{\pi}
	\end{equation}
	
	It extends meromorphically to $\mathbb C \setminus \{0,-1,-2,\ldots\}$ and $(x>0\mapsto \log\Gamma(x))$ is convex.
	
	We shall use the following two standard identities. First, the relation between $B$ and $\Gamma$-function
	\begin{equation}\label{eq:betaidentity}
		B(a,b)\coloneqq \int_0^1 t^{a-1}(1-t)^{b-1}\,\dd t
		=
		\frac{\Gamma(a)\Gamma(b)}{\Gamma(a+b)}
		\qquad
		\text{for }a,b>0.
	\end{equation}
	Second, the Legendre relation:
	\begin{equation}\label{eq:legendrerel}
		\Gamma(z)\Gamma\left(z+\frac12\right)
		=
		2^{1-2z}\sqrt{\pi}\,\Gamma(2z).
	\end{equation}
	And third, Euler's reflection formula
	\begin{equation}\label{eq:reflectionformula}
		\Gamma(z)\Gamma(1-z)
		=
		\frac{\pi}{\sin(\pi z)} \quad z \not \in \Z.
	\end{equation}
\end{lemma}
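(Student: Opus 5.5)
The plan is to derive everything from the integral definition, using the functional equation as the basic tool and the Beta identity \eqref{eq:betaidentity} as the bridge to the Legendre and reflection formulas.

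\emph{Functional equation and its direct consequences.}
\begin{enumerate}
\item For $x>0$, integrating by parts with $u=t^{x}$ and $\dd v=e^{-t}\dd t$ gives $\Gamma(x+1)=x\Gamma(x)$. The boundary terms vanish since $x>0$.
\item Both identities in \eqref{eq:prodformula} follow by induction on $n$ from the functional equation. For the second one the functional equation is applied at $z-n$, which needs the meromorphic continuation below when $z-n\le 0$.
\item The substitution $t=r^2$ gives $\Gamma(1/2)=2\int_0^\infty e^{-r^2}\dd r=\sqrt\pi$, which is \eqref{eq:Gamma12}.
\item The meromorphic extension comes from writing $\Gamma(z)=\Gamma(z+n)/\prod_{j=0}^{n-1}(z+j)$ on $\operatorname{Re} z>-n$. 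The integral defines a holomorphic function for $\operatorname{Re} z>0$ by dominated convergence.
\item Log-convexity is Hölder's inequality: for $\lambda\in(0,1)$, write $t^{\lambda x+(1-\lambda)y-1}e^{-t}$ as $(t^{x-1}e^{-t})^\lambda(t^{y-1}e^{-t})^{1-\lambda}$ and apply Hölder with exponents $1/\lambda$ and $1/(1-\lambda)$.
\end{enumerate}

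\emph{Beta identity.} Write $\Gamma(a)\Gamma(b)$ as a double integral over $(0,\infty)^2$. Substitute $s=uv$, $t=u(1-v)$, which has Jacobian $u$. The integral factorizes as $\Gamma(a+b)B(a,b)$ by Tonelli.

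\emph{Legendre relation.} For real $z>0$, apply the Beta identity with $a=b=z$. Then use symmetry about $t=1/2$ and the substitution $t=\tfrac12(1-\sqrt{\tau})$. This gives $B(z,z)=2^{1-2z}B(\tfrac12,z)$. Expressing both sides through Gamma and using $\Gamma(1/2)=\sqrt\pi$ yields \eqref{eq:legendrerel}. It extends to all admissible $z$ by the identity theorem.

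\emph{Reflection formula.} For $z\in(0,1)$, the Beta identity gives $\Gamma(z)\Gamma(1-z)=B(z,1-z)$. The substitution $t=u/(1+u)$ turns this into $\int_0^\infty \frac{u^{z-1}}{1+u}\dd u$. I expect this step to be the main obstacle, namely evaluating the integral as $\pi/\sin(\pi z)$. I would first try a keyhole contour integral of $w^{z-1}/(1+w)$, whose residue at $w=-1$ gives $\pi/\sin(\pi z)$ after accounting for the branch jump $e^{2\pi \i z}$. As an alternative, split the integral at $u=1$, expand geometrically, and recognize the partial fraction series of $\pi/\sin(\pi z)$. Finally extend to $z\notin\Z$ by analytic continuation, since both sides are meromorphic with the same poles.
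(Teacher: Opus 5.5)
Your proposal is correct. It is not a variant of an argument in the paper: the paper gives no proof of this lemma and simply quotes these classical facts from \cite[Chapter 5]{NIST}. Your route replaces that citation with a self-contained derivation in which the Beta integral is the hub. Integration by parts gives the functional equation, and hence \eqref{eq:prodformula} and the continuation. H\"older gives log-convexity. The change of variables $s=uv$, $t=u(1-v)$ gives \eqref{eq:betaidentity}. From it, $B(z,z)=2^{1-2z}B(\tfrac12,z)$ yields \eqref{eq:legendrerel}, while $B(z,1-z)=\int_0^\infty u^{z-1}(1+u)^{-1}\,\dd u$ yields \eqref{eq:reflectionformula}. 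The substitutions and the keyhole evaluation $2\pi\i\, e^{\i\pi(z-1)}/(1-e^{2\pi\i z})=\pi/\sin(\pi z)$ are right. The citation buys brevity for textbook material; your derivation buys the fact that everything the paper later needs rests on one double-integral computation and one contour integral. Two remarks. First, the geometric-series alternative is only as complete as the partial-fraction expansion of $\pi/\sin(\pi z)$, which you would have to import, so the contour argument is the genuinely self-contained one. Second, your final analytic-continuation step for \eqref{eq:reflectionformula} is essential rather than cosmetic: in \Cref{pr:Fourierexpansion} the formula is applied at $z=1+\frac p2-n$, which is negative for large $n$.
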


The following identity is also well-known.
\begin{lemma}[{\cite[formula 5.12.5]{NIST}, A trigonometric Gamma integral}]\label{la:TrigGammaIntegral}
	Let $a>-1$ and $n\ge0$. Then
	\begin{equation}\label{eq:trigintegral}
		\int_0^\pi \sin^a (x)\,\cos(2nx)\,\dx
		=
		(-1)^n
		\frac{
			\pi\Gamma(a+1)
		}{
			2^a
			\Gamma\left(1+\frac a2+n\right)
			\Gamma\left(1+\frac a2-n\right)
		}.
	\end{equation}
	If the denominator contains a pole of the Gamma function, equivalently if
	$\frac a2\in\{0,1,\ldots,n-1\}$, the right-hand side is interpreted as
	$0$.
\end{lemma}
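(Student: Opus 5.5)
The plan is to prove \eqref{eq:trigintegral} by expanding $\sin^a$ against the Fourier basis through a Beta-function computation, first for $\Re a$ large and then by analytic continuation. The starting point is the substitution $x\mapsto \pi-x$, which leaves $\sin^a(x)$ and $\cos(2nx)$ invariant. Hence
\[
\int_0^\pi \sin^a(x)\cos(2nx)\dx = \Re \int_0^\pi \sin^a(x) e^{2\i n x}\dx .
\]
I will compute the complex integral $I_n(a):=\int_0^\pi \sin^a(x)e^{2\i nx}\dx$.

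The first step is a contour argument giving a closed form. Write $\sin x = \frac{e^{\i x}-e^{-\i x}}{2\i}$, so $\sin^a(x) e^{2\i n x}$ becomes $(2\i)^{-a} e^{-\i a x}(e^{2\i x}-1)^a e^{2\i n x}$ with an appropriate branch. Substituting $w=e^{2\i x}$ and deforming to the segment $[0,1]$ turns this into a Beta integral. Rather than tracking branches, I prefer the classical alternative: the known formula
\[
\int_0^\pi \sin^{a}(x)e^{\i b x}\dx = \frac{\pi e^{\i\pi b/2}\Gamma(a+1)}{2^a\Gamma\bigl(1+\frac{a+b}{2}\bigr)\Gamma\bigl(1+\frac{a-b}{2}\bigr)},
\]
which I would prove as follows. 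Take the Beta integral \eqref{eq:betaidentity} in the form $\int_0^{\pi/2}\cos^{\alpha}\theta\cos(\beta\theta)\,d\theta$ (Cauchy's formula). Alternatively, argue directly for $\Re a>-1$: both sides are holomorphic in $b$ and of exponential type, and they agree at integer points via a binomial expansion when $a\in\N$. Carlson's theorem then extends the identity first in $b$ and then in $a$.

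The cleanest route is to verify it for $a=2m$ even, $m\in\N$, and then extend. For $a=2m$ the binomial expansion gives $\sin^{2m}x=4^{-m}\sum_k\binom{2m}{k}(-1)^{m-k}e^{2\i(m-k)x}$. Only $k=m+n$ survives after integrating against $e^{2\i nx}$ (using the conjugate-symmetric part), which yields $\pi 4^{-m}(-1)^n\binom{2m}{m+n}$. This matches the right-hand side because $\Gamma(2m+1)/(\Gamma(m+n+1)\Gamma(m-n+1))=\binom{2m}{m+n}$, with $1/\Gamma$ vanishing exactly when $m<n$; this also explains the stated convention for poles.

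The extension step is to view both sides as functions of $a$ on $\Re a>-1$. The left side is holomorphic and bounded by $\int_0^\pi\sin^{\Re a}$, so it has at most exponential growth of type $<\pi$ along the real direction. The right side is $(-1)^n\pi\,2^{-a}\Gamma(a+1)/(\Gamma(1+\frac a2+n)\Gamma(1+\frac a2-n))$. Using \eqref{eq:legendrerel} on $\Gamma(a+1)$ and \eqref{eq:reflectionformula} on $1/\Gamma(1+\frac a2-n)$, this becomes a polynomial in $a$ times $\sin(\pi a/2)$-type factors divided by $\Gamma$ ratios, which again have controlled exponential type. Their difference is holomorphic in $\Re a>-1$, vanishes at the even integers $a=2m$, and has exponential type $<\pi$ in the relevant half-plane. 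I will apply Carlson's theorem to $f(z)=\text{difference}(2z)$ on $\Re z\ge0$, where the required type is $<\pi$.

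The main obstacle is the growth estimate for the right-hand side. The issue is to show $\Gamma(2z+1)/(4^z\Gamma(1+z+n)\Gamma(1+z-n))$ has type $<\pi$ on $\Re z\ge 0$. By Legendre this equals $\Gamma(z+\frac12)\Gamma(z+1)/(\sqrt\pi\,\Gamma(z+1+n)\Gamma(z+1-n))$, a rational function of $z$, since the ratios $\Gamma(z+1)/\Gamma(z+1\pm n)$ are polynomial or reciprocal-polynomial. So the right side is in fact of polynomial growth, which settles the obstacle. The left side, $\int_0^\pi \sin^{2z}(x)\cos(2nx)\dx$, is bounded by $\pi$ for $\Re z\ge0$, so Carlson applies directly, giving the formula for all $a\ge0$ and by analytic continuation for $a>-1$.
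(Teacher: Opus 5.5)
Your route differs from the paper's. The paper makes a change of variables ($x=\frac{\pi}{2}-t$) and quotes the classical formula for $\int_0^{\pi/2}\cos^{A-1}t\,\cos(bt)\dd t$ (NIST 5.12.5) with $A=a+1$, $b=2n$. You instead verify the identity at $a=2m$ by the binomial theorem and interpolate in $a$ with Carlson's theorem. Write $L(a)$ and $R(a)$ for the two sides. Your even-integer computation is correct, including the vanishing for $m<n$, and so is the bound $\abs{L(2z)}\le\pi$ on $\Re z\ge0$.

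However, the step you yourself call the main obstacle rests on a false claim. After Legendre, $4^{-z}\Gamma(2z+1)=\pi^{-1/2}\Gamma(z+\frac12)\Gamma(z+1)$ contains only \emph{one} factor $\Gamma(z+1)$, but you use it for both ratios $\Gamma(z+1)/\Gamma(z+1\pm n)$. What you actually have is
\[
\frac{\Gamma(z+\frac12)\Gamma(z+1)}{\Gamma(z+1+n)\Gamma(z+1-n)}
=\frac{\Gamma(z+\frac12)}{\Gamma(z+1)}\cdot\frac{\prod_{j=0}^{n-1}(z-j)}{\prod_{j=1}^{n}(z+j)}.
\]
The factor $\Gamma(z+\frac12)/\Gamma(z+1)$ is not rational: it has a pole at every $z=-\frac12-k$, $k\in\N_0$. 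So ``rational, hence polynomial growth'' is wrong. As written, nothing controls $R(2z)$ on the closed right half-plane, in particular on the imaginary axis, where Carlson needs type $<\pi$.

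The needed bound is true and cheap, so the gap closes. By \eqref{eq:betaidentity}, continued analytically to $\Re z>-\frac12$, for $\Re z\ge0$ we have
\[
\abs{\frac{\Gamma(z+\frac12)}{\Gamma(z+1)}}=\frac{1}{\sqrt{\pi}}\abs{\int_0^1 t^{z-\frac12}(1-t)^{-\frac12}\dd t}\le\frac{1}{\sqrt\pi}\int_0^1 t^{-\frac12}(1-t)^{-\frac12}\dd t=\sqrt{\pi}.
\]
For $\Re z\ge0$ and $j\ge0$ one has $\abs{z-j}\le\abs{z+j}$, so the rational factor has modulus at most $\abs{z}/\abs{z+n}\le1$; for $n=0$ it equals $1$. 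Hence $\abs{R(2z)}\le\pi$ on $\Re z\ge0$. Carlson's theorem then applies to $L(2z)-R(2z)$, and the identity theorem extends the formula to $\Re a>-1$. Stirling's formula, giving $\Gamma(z+\frac12)/\Gamma(z+1)=O(\abs{z}^{-1/2})$ uniformly on $\Re z\ge0$, would work equally well.

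With this repair your argument is a self-contained proof. It also makes the pole convention transparent: the zeros of $1/\Gamma(1+m-n)$ match the vanishing binomial coefficients. The price is a complex-analytic interpolation step that the paper avoids by citing the tabulated formula.
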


\begin{proof}
	The formula \eqref{eq:trigintegral} follows (after a change of variables) from the well-known identity
	\[
	\int_0^{\pi/2}(\cos t)^{A-1}\cos(bt)\,\dif t
	=
	\frac{\pi\Gamma(A)}
	{2^A
		\Gamma\left(\frac{A+b+1}{2}\right)
		\Gamma\left(\frac{A-b+1}{2}\right)},
	\qquad A>0,
	\]
	see, e.g., \cite[formula 5.12.5]{NIST}. 
	
	Since $a>-1$, the only possible pole in the denominator is in
	$\Gamma(1+\frac a2-n)$. This occurs precisely when
	\[
	1+\frac a2-n\in \{0,-1,-2,\ldots\},
	\]
	equivalently when $\frac a2\in\{0,1,\ldots,n-1\}$. In these cases the
	right-hand side is understood through the reciprocal Gamma function, so that
	$1/\Gamma(-k)=0$.
\end{proof}

The following computation, combined with \Cref{th:blaschkemin} implies \eqref{eq:brezis51}
\begin{lemma}\label{la:brezis51}
For $1<p<2$ set
\[
F(p)\coloneqq 
\frac{\Gamma\left(\frac p2\right)}
{\Gamma\left(\frac{p-1}{2}\right)}
2^{-p}\pi^{-3/2}\frac{1}{p-1}.
\]
Then, it holds
\[
\sup_{1<p<2} F(p)=\frac{1}{4\pi}.
\]
\end{lemma}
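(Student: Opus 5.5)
The plan is to show that $F$ is strictly decreasing on $(1,2)$ and that $F(p)\to\frac{1}{4\pi}$ as $p\to1^+$. Then the supremum equals $\frac1{4\pi}$ and is not attained.

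First I remove the apparent singularity at $p=1$. By the functional equation in \Cref{la:GammaFacts} with $x=\frac{p-1}{2}>0$,
\[
(p-1)\,\Gamma\!\left(\tfrac{p-1}{2}\right)=2\,\tfrac{p-1}{2}\,\Gamma\!\left(\tfrac{p-1}{2}\right)=2\,\Gamma\!\left(\tfrac{p+1}{2}\right).
\]
Hence
\[
F(p)=\frac{\Gamma\left(\frac p2\right)}{2\,\Gamma\left(\frac{p+1}{2}\right)}\,2^{-p}\pi^{-3/2},
\]
and this expression is smooth and positive on a neighbourhood of $[1,2]$. Letting $p\to1^+$ and using \eqref{eq:Gamma12} together with $\Gamma(1)=1$ gives
\[
\lim_{p\to1^+}F(p)=\frac{\sqrt\pi}{2\cdot 2\,\pi^{3/2}}=\frac1{4\pi}.
\]

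Second I prove monotonicity by differentiating the logarithm. With $\psi=(\log\Gamma)'$ on $(0,\infty)$,
\[
\frac{d}{dp}\log F(p)=\tfrac12\,\psi\!\left(\tfrac p2\right)-\tfrac12\,\psi\!\left(\tfrac{p+1}{2}\right)-\log 2 .
\]
By the log-convexity of $\Gamma$ recorded in \Cref{la:GammaFacts}, $\psi$ is nondecreasing. Since $\frac p2<\frac{p+1}2$, the first two terms together are $\le0$, so the whole derivative is at most $-\log2<0$. Thus $F$ is strictly decreasing on $(1,2)$.

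Combining the two steps, $F(p)<\lim_{q\to1^+}F(q)=\frac1{4\pi}$ for every $p\in(1,2)$, and therefore $\sup_{1<p<2}F(p)=\frac1{4\pi}$. As a sanity check, at the other endpoint $F(2)=\frac{1}{4\pi^2}<\frac1{4\pi}$, consistent with $F$ decreasing.

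The only mildly delicate point is the limit at $p=1$, where $\Gamma\left(\frac{p-1}{2}\right)$ blows up. The functional-equation rewriting above handles it cleanly, so I do not expect a real obstacle.
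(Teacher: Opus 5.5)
Your proof is correct and follows essentially the same route as the paper. Like the paper, you rewrite $F$ via $\Gamma(x+1)=x\Gamma(x)$ as $\frac{\Gamma(p/2)}{2\Gamma((p+1)/2)}2^{-p}\pi^{-3/2}$, show strict decrease through the log-derivative and the monotonicity of $\psi$, and evaluate the limit as $p\to1^+$; the differences are cosmetic (the paper works in $x=\frac{p-1}{2}$ and uses $\Gamma(\varepsilon)\sim\varepsilon^{-1}$ for the limit, while you use continuity of the rewritten expression and get strictness from the $-\log 2$ term alone).
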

\begin{proof}
Set
\[
x\coloneqq \frac{p-1}{2}\in\left(0,\frac12\right).
\]
Then we have
\[
F(p)
=
\frac{\Gamma\left(x+\frac12\right)}{\Gamma(x)}
2^{-2x-1}\pi^{-3/2}\frac{1}{2x}
=
\frac{\Gamma\left(x+\frac12\right)}{\Gamma(x+1)}
2^{-2x-2}\pi^{-3/2}.
\]
Its log derivative in $x$ is given by 
\[
\frac{\dd}{\dd x}\log F
=
\psi\left(x+\frac12\right)-\psi(x+1)-2\log 2,
\]
where $\psi=\Gamma'/\Gamma$. Since $\psi$ is strictly increasing on $(0,\infty)$ (because $\log \Gamma$ is convex on the real axis), we have
\[
\psi\left(x+\frac12\right)-\psi(x+1)<0.
\]
Thus $F$ is strictly decreasing on $(1,2)$. Therefore
\[
\sup_{1<p<2} F(p)
=
\lim_{p\downarrow 1} F(p).
\]
Using $\Gamma(\varepsilon)\sim \varepsilon^{-1}$ as $\varepsilon\downarrow0$, we obtain 
\[
\Gamma\left(\frac{p-1}{2}\right)
\sim \frac{2}{p-1}.
\]
Hence, it holds
\[
\lim_{p\downarrow1}
\frac{\Gamma\left(\frac p2\right)}
{\Gamma\left(\frac{p-1}{2}\right)}
\frac{1}{p-1}
=
\frac{\Gamma\left(\frac12\right)}{2}
=
\frac{\sqrt\pi}{2}.
\]
Since $2^{-p}\to 2^{-1}$, we obtain
\[
\lim_{p\downarrow1}F(p)
=
\frac{\sqrt\pi}{2}\cdot \frac12\cdot \pi^{-3/2}
=
\frac{1}{4\pi}.
\]
\end{proof}

\subsection{Defect, quadratic energy and degree}
We recall the Cauchy index formula for the degree for maps $v\colon \S^1 \to \S^1$ (see, e.g., \cite[Equation  (12.1)]{BM-book})
\begin{equation}\label{eq:Cauchyindex}
\begin{split}
 \deg v = \frac{1}{2\pi \i} \int_0^{2\pi} \frac{v'(e^{\i t} )}{v(e^{\i t})} \dd t
 =
\frac1{2\pi \i}
\int_0^{2\pi}
\overline{v(e^{\i t})}\frac{\dd}{\dd t}v(e^{\i t})\,\dd t.
\end{split}
\end{equation}

\begin{lemma}\label{la:degun}
If $u\colon  \S^1 \to \S^1$ then if $\deg u \geq 0$ and $n \in \{1,2,\ldots,\}$ then
\[
 \deg(u^n) = n \deg(u).
\]
\end{lemma}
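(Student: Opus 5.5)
The plan is to apply the Cauchy index formula \eqref{eq:Cauchyindex} to $v=u^n$ and then expand the derivative with the product rule. The claim $\deg(u^n)=n\deg u$ actually holds for every integer degree; the hypothesis $\deg u\ge 0$ plays no role beyond the statement, so I would prove it in general. The only real point is that $u$ is merely in a fractional Sobolev space, not smooth, so the formula has to be justified.

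\textbf{Smooth case.} Suppose first that $u\in C^1(\S^1,\S^1)$. Then $\frac{\dd}{\dd t}u^n(e^{\i t}) = n\,u^{n-1}(e^{\i t})\,\frac{\dd}{\dd t}u(e^{\i t})$. Since $|u|=1$, we have $\overline{u^n}\,u^{n-1}=\overline{u}\,|u|^{2(n-1)}=\overline u$. Substituting into \eqref{eq:Cauchyindex} gives
\[
\deg(u^n)=\frac{1}{2\pi\i}\int_0^{2\pi} n\,\overline{u}\,\frac{\dd}{\dd t}u\,\dd t = n\deg u.
\]
An equivalent route is to write $u(e^{\i t})=e^{\i\varphi(t)}$ with a continuous lift satisfying $\varphi(2\pi)-\varphi(0)=2\pi\deg u$. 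Then $n\varphi$ is a lift of $u^n$, and its total increase is $2\pi n\deg u$. This lifting argument needs only continuity.

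\textbf{General case.} For $u\in W^{s,1/s}(\S^1,\S^1)$ the degree is defined via VMO (Brezis--Nirenberg), and it is continuous under strong convergence in $W^{s,1/s}$. I would proceed in three steps.
\begin{enumerate}
\item Approximate $u$ by smooth maps $u_k\colon\S^1\to\S^1$ in $W^{s,1/s}$. This is possible because smooth $\S^1$-valued maps are dense in the critical space.
\item Show that $u_k^n\to u^n$ in $W^{s,1/s}$. This follows from the pointwise estimate $|a^n-b^n|\le n|a-b|$ for $|a|=|b|=1$, which bounds the Gagliardo seminorm of $u_k^n-u^n$ by $n$ times that of $u_k-u$ plus cross terms, with dominated convergence along a subsequence. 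Alternatively, convergence of $u_k^n$ to $u^n$ in VMO suffices, and it holds by the same Lipschitz bound.
\item By continuity of the degree under this convergence, $\deg(u^n)=\lim_k\deg(u_k^n)=\lim_k n\deg u_k=n\deg u$.
\end{enumerate}

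The main (mild) obstacle is the density and convergence step in the fractional setting. I would handle it through the VMO stability of the degree, since the Lipschitz estimate makes $u\mapsto u^n$ continuous in BMO/VMO.
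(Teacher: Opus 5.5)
Your smooth-case computation is exactly the paper's proof: insert $u^n$ into the Cauchy index formula \eqref{eq:Cauchyindex}, apply the product rule, and use $|u|=1$ to reduce $\overline{u^n}\,u^{n-1}$ to $\overline{u}$; you are also right that $\deg u\ge 0$ is never used. The paper stops there because it only applies the lemma to smooth maps (see \Cref{pr:Esvsid}), so your approximation argument is an optional extension, and there you should rely on the $W^{s,1/s}$-convergence route, since the pointwise bound $|a^n-b^n|\le n|a-b|$ by itself does not give convergence of $u_k^n$ to $u^n$ in the BMO seminorm.
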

\begin{proof}
This is a direct computation using \eqref{eq:Cauchyindex}:
\[
\begin{split}
 \deg u^n
 &=\frac1{2\pi i}
\int_0^{2\pi}
\overline{u^n(e^{it})}\frac{\dd}{\dd t}\brac{u^n(e^{it})}\dd t\\[2mm]
 &=n \frac1{2\pi i}
\int_0^{2\pi}
\overline{u^n(e^{it})} u^{n-1}(e^{it})\frac{\dd}{\dd t}\brac{u(e^{it})}\dd t\\[2mm]
 &=n \frac1{2\pi i}
\int_0^{2\pi}
\overline{u^n(e^{it})} \frac{1}{\overline{u^{n-1}(e^{it}})}\frac{\dd}{\dd t}\brac{u(e^{it})}\dd t\\[2mm]
 &=n \frac1{2\pi i}
\int_0^{2\pi}
\overline{u(e^{it})} \frac{\dd}{\dd t}\brac{u(e^{it})}\dd t\\[2mm]
&=n \deg u.
\end{split}
\]
\end{proof}

\begin{lemma}\label{la:holomextvsdeg}
Let $v\colon \S^1 \to \S^1$ be continuous and let $V\colon  \mathbb{D} \to \C$ be any extension of $v$ to the whole disc $\mathbb{D}$. Then
\begin{equation}\label{eq:holomextvsdeg}
 \int_{\mathbb{D}}|\partial_{\bar{z}} V|^2 = \frac{1}{4} \int_{\mathbb{D}} |\nabla V|^2 - \frac{\pi}{2} \deg v,
\end{equation}
where $\partial_{\bar{z}} \coloneqq  \frac{1}{2}  \brac{\partial_{x} + \i \partial_y}$ is the usual Wirtinger derivative.
\end{lemma}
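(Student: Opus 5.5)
The plan is to expand $|\nabla V|^2$ in Wirtinger derivatives, to reduce the defect between $\frac14|\nabla V|^2$ and $|\partial_{\bar z}V|^2$ to the Jacobian of $V$, and to evaluate the Jacobian integral by Stokes' theorem, where it becomes the degree via the Cauchy index formula \eqref{eq:Cauchyindex}. Writing $\partial_z=\frac12(\partial_x-\i\partial_y)$ and $\partial_{\bar z}=\frac12(\partial_x+\i\partial_y)$, we have for a complex-valued $V=V_1+\i V_2$ the pointwise identities
\[
|\partial_z V|^2+|\partial_{\bar z}V|^2=\tfrac12|\nabla V|^2,\qquad |\partial_z V|^2-|\partial_{\bar z}V|^2=\det DV=\partial_xV_1\,\partial_yV_2-\partial_yV_1\,\partial_xV_2 .
\]
Both follow by direct expansion. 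Subtracting gives $|\partial_{\bar z}V|^2=\frac14|\nabla V|^2-\frac12\det DV$. So \eqref{eq:holomextvsdeg} is equivalent to
\[
\int_{\mathbb D}\det DV=\pi\deg v .
\]

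First assume $V\in C^1(\overline{\mathbb D})$. We write $\det DV\,\dx\wedge\dy=dV_1\wedge dV_2=d(V_1\,dV_2)$, or in complex form $\frac{1}{2\i}\,d\overline V\wedge dV$. Stokes' theorem then gives
\[
\int_{\mathbb D}\det DV=\frac{1}{2\i}\int_{\S^1}\overline{v}\,dv=\frac1{2\i}\int_0^{2\pi}\overline{v(e^{\i t})}\tfrac{\dd}{\dd t}v(e^{\i t})\,\dd t=\pi\deg v,
\]
by \eqref{eq:Cauchyindex}. Here we used that $\operatorname{Re}(\overline v\,dv)=\frac12 d|v|^2=0$ on $\S^1$, so that only $\frac{1}{2\i}$ times the imaginary-type part survives and matches the one-form $V_1\,dV_2-V_2\,dV_1$ halved. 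I will check the constant $\frac12$ carefully: $d(\frac12(V_1dV_2-V_2dV_1))=dV_1\wedge dV_2$, and $\frac12(v_1dv_2-v_2dv_1)=\frac1{2\i}\overline v\,dv$ on $\S^1$.

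For the general case (continuous $v$, with an extension $V$ of finite Dirichlet energy; if $\int|\nabla V|^2=\infty$ both sides are infinite and there is nothing to prove), I will approximate. The main obstacle is that the boundary trace $v$ is only continuous, so \eqref{eq:Cauchyindex} must be read through smoothing. The first step is to use that $\int_{\mathbb D}\det DV$ depends only on the trace $v\in H^{1/2}$. Indeed, if $W\in H^1_0$, then $\int\det D(V+W)-\int\det DV$ is a sum of terms $\int\det(DV,DW)$ and $\int\det DW$, which vanish by integrating by parts the null-Lagrangian structure, first for $W\in C_c^\infty$ and then by density in $H^1_0$, since $\det$ is continuous on $H^1\times H^1\to L^1$. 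Thus we may replace $V$ by the harmonic extension of $v$.

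The second step approximates $v$ by smooth $\S^1$-valued maps $v_k\to v$ uniformly and in $H^{1/2}$, for instance by convolution followed by normalization, which is allowed since $v$ is continuous and so $|v*\rho_\eps|\to1$ uniformly. The harmonic extensions then converge in $H^1$, so the Jacobian integrals converge. The degrees are eventually constant, equal to $\deg v$, because the $v_k$ converge uniformly. Applying the smooth case to each $v_k$ and passing to the limit concludes the proof.
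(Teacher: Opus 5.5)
Your proof is essentially the paper's. The paper also reduces \eqref{eq:holomextvsdeg} to the pointwise identity $4|\partial_{\bar z}V|^2=|\nabla V|^2-2\det(\nabla \Re V,\nabla \Im V)$, obtained by brute-force expansion rather than via $|\partial_zV|^2\pm|\partial_{\bar z}V|^2$. It then just writes ``by integration'', leaving implicit the step $\int_{\mathbb D}\det DV=\pi\deg v$ that you carry out via Stokes and \eqref{eq:Cauchyindex}. Your approximation argument for extensions with $\nabla V\in L^2$ is a correct addition that the paper does not give.

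The one claim to drop is that $\int_{\mathbb D}|\nabla V|^2=\infty$ makes both sides infinite. The left-hand side only needs $\partial_{\bar z}V\in L^2$. For example, for the harmonic extension it equals $\pi\sum_{m\ge1}m|a_{-m}|^2$ by \eqref{eq:BvvsHolom}, which is finite whenever the negative Fourier modes of $v$ have finite half-energy, regardless of the positive ones. Excluding a continuous $\S^1$-valued $v$ with $\sum_{k<0}|k||a_k|^2<\infty$ but $\sum_{k>0}k|a_k|^2=\infty$ is a nontrivial one-sided Fourier question. It is of the same nature as the issue the paper links to Brezis' Open Problem 5.5, and your argument does not settle it. So state the lemma for extensions of finite Dirichlet energy (or $V\in C^1(\overline{\mathbb D})$). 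This covers every use in the paper: the harmonic extension of $v^n$ and the extension $V^n$ built from a smooth $v$.
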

\begin{proof}
Since
\[
\begin{split}
 &4\, \partial_{\bar{z}} V \overline{\partial_{\bar{z}} V}\\[2mm]
 &= 4\, \partial_{\bar{z}} V \partial_{z} \overline{V}\\[2mm]
 &=  \brac{\partial_{x}+\i \partial_y} \brac{\Re(V) + \i \Im (V)} \brac{\partial_{x}-\i \partial_y} \brac{\Re(V) - \i \Im (V)}\\[2mm]
 &=  \brac{\brac{\partial_{x}+\i \partial_y}\Re(V) + \brac{\i \partial_{x}- \partial_y} \Im (V)}  \brac{\brac{\partial_{x}-\i \partial_y}\Re(V) -  \brac{\i \partial_{x}+ \partial_y}\Im (V)}\\[2mm]
&= \brac{\partial_{x}\Re(V) - \partial_y \Im (V) +\i \partial_y \Re(V) + \i \partial_{x}\Im (V)}  \brac{\partial_{x}\Re(V)- \partial_y\Im (V)-\i \partial_y\Re(V) -  \i \partial_{x}\Im (V)}\\[2mm]
&=  \brac{\partial_{x}\Re(V) - \partial_y \Im (V) +\i \partial_y \Re(V) + \i \partial_{x}\Im (V)}  \brac{\partial_{x}\Re(V)- \partial_y\Im (V)-\i \brac{\partial_y\Re(V) +  \partial_{x}\Im (V)}}\\[2mm]
&= \brac{\partial_{x}\Re(V) - \partial_y \Im (V)}^2 + \brac{\partial_y \Re(V) + \partial_{x}\Im (V)}^2\\[2mm]
&= \brac{\partial_{x}\Re(V)}^2 +  \brac{\partial_y \Im (V)}^2 - 2 \partial_{x}\Re(V)\, \partial_y \Im (V) + \brac{\partial_y \Re(V)}^2 + \brac{\partial_{x}\Im (V)}^2 + 2 \partial_y \Re(V)\, \partial_{x}\Im (V)\\[2mm]
&= |\nabla V|^2 -2 \det (\nabla \Re{V}, \nabla \Im{V}).
\end{split}
 \]
We have \eqref{eq:holomextvsdeg} by integration.
\end{proof}

\begin{lemma}[Fourier expansion of the quadratic energy]\label{la:E12vsdefect}
Let $v\colon \S^1\to\S^1$ be a smooth map given as
\[
v(e^{\i t})=\sum_{k\in\Z}a_k e^{\i kt}.
\]
Then
\begin{equation}\label{eq:1/2energyfourier}
E_{1/2}(v)
=
4\pi^2
\sum_{k\in\Z}|k|\,|a_k|^2,
\end{equation}
and
\begin{equation}\label{eq:degreefourier}
\deg v
=
\sum_{k\in\Z}k\,|a_k|^2.
\end{equation}
Therefore, if $\deg v\ge0$, then
\[
E_{1/2}(v)
=
4\pi^2\deg v
+
8\pi^2
\sum_{k<0}|k|\,|a_k|^2.
\]
\end{lemma}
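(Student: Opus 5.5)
We prove the three identities in turn, working in the parametrisation $x=e^{\i t}$, $y=e^{\i \tau}$ with $t,\tau\in[0,2\pi)$, so that $\dx\,\dy=\dd t\,\dd \tau$ and $|x-y|^2=|e^{\i t}-e^{\i\tau}|^2=4\sin^2\frac{t-\tau}{2}$.

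For \eqref{eq:1/2energyfourier}, note that $p=2$ when $s=\frac12$. Substituting $h=t-\tau$ and using Fubini (the integrand is nonnegative), we get
\[
E_{1/2}(v)=\int_0^{2\pi}\frac{1}{4\sin^2\frac h2}\int_0^{2\pi}|v(e^{\i(\tau+h)})-v(e^{\i\tau})|^2\,\dd\tau\,\dd h .
\]
The inner integral follows from Parseval, since the Fourier coefficients of $v(e^{\i(\cdot+h)})-v(e^{\i\cdot})$ are $a_k(e^{\i kh}-1)$:
\[
\int_0^{2\pi}|v(e^{\i(\tau+h)})-v(e^{\i\tau})|^2\,\dd\tau=2\pi\sum_k|a_k|^2|e^{\i kh}-1|^2=2\pi\sum_k|a_k|^2\,4\sin^2\tfrac{kh}{2}.
\]
By Tonelli we may exchange the sum and the $h$-integral, so it remains to compute the kernel integral
\[
\int_0^{2\pi}\frac{\sin^2(kh/2)}{\sin^2(h/2)}\,\dd h=2\pi|k| .
\]
This is the classical Fejér-kernel identity. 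To see it, write $\frac{\sin^2(kh/2)}{\sin^2(h/2)}=\bigl|\sum_{j=0}^{|k|-1}e^{\i jh}\bigr|^2$ and apply orthogonality. (Alternatively, it follows from \Cref{la:TrigGammaIntegral} with $a=0$ via $\sin^2=(1-\cos)/2$.) Collecting the constants gives
\[
E_{1/2}(v)=2\pi\cdot 2\pi\sum_k|k||a_k|^2=4\pi^2\sum_k|k||a_k|^2 .
\]
As a consistency check, \Cref{la:energofzk} gives $4\pi^2|k|$ for $v=z^k$, which matches.

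For \eqref{eq:degreefourier}, we use the Cauchy index formula \eqref{eq:Cauchyindex}. Since $v$ is smooth, $\frac{\dd}{\dd t}v(e^{\i t})=\sum_k \i k a_k e^{\i kt}$ with absolutely convergent coefficients. Parseval then gives
\[
\int_0^{2\pi}\overline{v}\,\partial_t v\,\dd t=2\pi\sum_k \i k|a_k|^2 ,
\]
and dividing by $2\pi\i$ yields $\deg v=\sum_k k|a_k|^2$.

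The final identity is algebraic. Write $|k|=k+2|k|\chi_{k<0}$, so that
\[
\sum_k|k||a_k|^2=\sum_k k|a_k|^2+2\sum_{k<0}|k||a_k|^2=\deg v+2\sum_{k<0}|k||a_k|^2 .
\]
Multiplying by $4\pi^2$ gives the claim. The hypothesis $\deg v\ge0$ is not actually needed for this identity; it only signals that the negative modes are the defect.

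The only mildly delicate step is justifying the interchange of sums and integrals near the diagonal $h=0$. This is handled by Tonelli, since all terms are nonnegative. Smoothness of $v$ ensures the series converge, and the kernel computation above is what produces the weight $|k|$.
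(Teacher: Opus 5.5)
Your proof is correct. It is more self-contained than the paper's, which does no computation here. The paper cites \cite[Lemma 12.5]{BM-book} for \eqref{eq:1/2energyfourier} and \cite[Theorem 12.6]{BM-book} for \eqref{eq:degreefourier}. The last identity is then the same rearrangement of $\sum_k|k||a_k|^2$ into $\sum_k k|a_k|^2+2\sum_{k<0}|k||a_k|^2$ that you write out.

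You instead derive both cited facts directly:
\begin{itemize}
\item Parseval in $\tau$ reduces the double integral to the scalar kernel integrals $\int_0^{2\pi}\sin^2(kh/2)/\sin^2(h/2)\,\dd h=2\pi|k|$, via the Fej\'er-kernel representation. This is exactly where the weight $|k|$ and the constant $4\pi^2$ come from.
\item Parseval applied to the Cauchy index formula \eqref{eq:Cauchyindex} gives the degree identity.
\item Tonelli legitimately justifies every interchange, since all integrands and summands are nonnegative. Smoothness of $v$ gives absolute convergence of all series.
\end{itemize}
The citation buys brevity. Your version makes the constants transparent, and you correctly point out that the hypothesis $\deg v\ge 0$ plays no role in the final identity.

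One minor caveat concerns your parenthetical alternative via \Cref{la:TrigGammaIntegral} with $a=0$: it is not really an independent route. For $a=0$ that lemma only encodes orthogonality of the cosines. It becomes useful only after you have expanded the Fej\'er kernel as the trigonometric polynomial $\sum_{|j|<|k|}(|k|-|j|)e^{\i jh}$. Splitting $\sin^2(kh/2)=(1-\cos(kh))/2$ against the denominator instead leads to the exponent $a=-2$. That lies outside the lemma's range, and there the two pieces diverge separately. Your main Fej\'er-kernel argument does not depend on this remark.
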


\begin{proof}
The proof of \eqref{eq:1/2energyfourier} can be found in \cite[Lemma 12.5]{BM-book} and the proof of \eqref{eq:degreefourier} in \cite[Theorem 12.6]{BM-book}.
\end{proof}

This motivates the following definition.
\begin{definition}[Defect]\label{def:defect}
For a smooth map $v\colon \S^1\to\S^1$ given as its Fourier series
\[
v(e^{\i t})=\sum_{k\in\Z}a_k e^{\i kt},
\]
we define the defect
\[
D(v)
\coloneqq
\sum_{k<0}|k|\,|a_k|^2.
\]
\end{definition}

Then \Cref{la:E12vsdefect} provides the well-known decomposition
\begin{equation}\label{eq:hhvsdefect}
E_{1/2}(v)=4\pi^2\deg v+8\pi^2 D(v) \quad \text{if $\deg v \geq 0$},
\end{equation}
cf. \cite[Theorem 5.3.]{BFavorite}.

It is worth pointing out the regularity needed to make sense of $D(u)$
\begin{lemma}\label{la:continuityBuH12}
We have
\[
 W^{\frac{1}{2},2}(\S^1,\S^1) \ni u \mapsto D(u)
\]
is locally uniformly continuous in the sense that
\[
 \abs{D(u_1)-D(u_2)} \aleq \brac{[u_1]_{W^{\frac{1}{2},2}(\S^1)} + [u_2]_{W^{\frac{1}{2},2}(\S^1)}} [u_1-u_2]_{W^{\frac{1}{2},2}}.
\]
\end{lemma}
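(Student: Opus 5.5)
The plan is to write $D$ as a bilinear form evaluated on the diagonal and then apply Cauchy--Schwarz. For $u\in W^{\frac12,2}(\S^1,\S^1)$ with Fourier coefficients $a_k(u)$, \Cref{def:defect} still makes sense, since $D(u)\le \sum_{k\in\Z}|k||a_k|^2$. By \eqref{eq:1/2energyfourier} this sum equals $\frac{1}{4\pi^2}E_{1/2}(u)=\frac{1}{4\pi^2}[u]^2_{W^{\frac12,2}}$. On $W^{\frac12,2}(\S^1,\C)$ I introduce the sesquilinear form
\[
Q(f,g)\coloneqq\sum_{k<0}|k|\,a_k(f)\overline{a_k(g)},
\]
so that $D(u)=Q(u,u)$. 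This is the $\dot H^{\frac12}$ inner product restricted to the negative frequencies. Cauchy--Schwarz therefore gives
\[
|Q(f,g)|\le Q(f,f)^{1/2}Q(g,g)^{1/2}\le \frac{1}{4\pi^2}[f]_{W^{\frac12,2}}[g]_{W^{\frac12,2}}.
\]
For the second inequality I use \eqref{eq:1/2energyfourier} in its form for general complex-valued $f$. It is the classical Parseval identity for the Gagliardo seminorm and does not need $|f|=1$: it comes from $\int_{\S^1}\int_{\S^1}\frac{|e^{\i kx}-e^{\i ky}|^2}{|x-y|^2}=4\pi^2|k|$ together with orthogonality of the modes.

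Next I expand
\[
D(u_1)-D(u_2)=Q(u_1,u_1)-Q(u_2,u_2)=Q(u_1-u_2,u_1)+Q(u_2,u_1-u_2).
\]
Applying the Cauchy--Schwarz bound to each term gives
\[
|D(u_1)-D(u_2)|\le\frac{1}{4\pi^2}\brac{[u_1]_{W^{\frac12,2}}+[u_2]_{W^{\frac12,2}}}[u_1-u_2]_{W^{\frac12,2}},
\]
which is the claim.

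The only point needing care is justification. Formula \eqref{eq:1/2energyfourier} is quoted for smooth $\S^1$-valued maps, but I apply it to $u_1-u_2$, which is not $\S^1$-valued. So I either re-derive the Fourier identity for general $f\in W^{\frac12,2}(\S^1,\C)$ directly, which is a routine computation, or reduce to trigonometric polynomials and pass to the limit by density, since both sides are continuous in the seminorm. Since everything is phrased through Fourier coefficients, $D$ is well defined for all $u\in W^{\frac12,2}$, and no smoothness of $u_1,u_2$ is needed.
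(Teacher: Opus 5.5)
Your proof is correct, but it takes a somewhat different route from the paper's.

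The paper does not polarize $D$ directly. It writes $8\pi^2 D(v)=E_{1/2}(v)-4\pi^2\deg v$ via \eqref{eq:hhvsdefect} and estimates the two pieces separately. For the energy it uses $\abs{[u_1]^2-[u_2]^2}\le [u_1-u_2]\brac{[u_1]+[u_2]}$, with $[\cdot]=[\cdot]_{W^{\frac12,2}}$. For the degree it uses the Cauchy index formula \eqref{eq:Cauchyindex} and integration by parts, which amounts to an $H^{1/2}$--$H^{-1/2}$ duality bound on $\int\bar v\,\partial_t v$.

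The underlying mechanism is the same in both arguments: a sesquilinear form bounded by the $\dot H^{\frac12}$ inner product, evaluated on the diagonal and then polarized. The paper applies this twice, once to the full $\dot H^{\frac12}$ form and once to the degree form $\sum_k k\,a_k\overline{b_k}$. You apply it once, to the negative-frequency part alone.

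Your version buys several things:
\begin{itemize}
\item it is self-contained;
\item it gives the explicit constant $\frac{1}{4\pi^2}$;
\item it holds for arbitrary $\C$-valued $W^{\frac12,2}$ maps;
\item it makes no reference to the sign of the degree.
\end{itemize}
The last point matters because \eqref{eq:hhvsdefect} is stated only for smooth $v$ with $\deg v\ge 0$. The paper's route therefore tacitly needs that identity for all degrees, which does follow from \eqref{eq:1/2energyfourier} and \eqref{eq:degreefourier}, and it needs a density argument. Your Fourier-side definition avoids both.

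The paper's route, in turn, reuses the decomposition \eqref{eq:hhvsdefect} it has already established. It also makes the $H^{1/2}$-continuity of the degree explicit, which is a useful statement in its own right.
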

\begin{proof}
This follows from \eqref{eq:hhvsdefect}, observing that
\[
 \abs{\deg(v_1)-\deg(v_2)} \aleq [v_1-v_2]_{H^{\frac{1}{2}}}\, \brac{[v_1]_{H^{\frac{1}{2}}}+ [v_2]_{H^{\frac{1}{2}}}}.
\]
Which follows from the degree formula and integration by parts.
\end{proof}

The defect $D(u)$ controls the difference to Blaschke products in the following sense.
\begin{lemma}\label{la:zero-or-equality-defect-blaschke}
Let
$u\in W^{\frac12,2}(\S^1,\S^1)$. Assume that its defect from \Cref{def:defect} satisfies $D(u)=0$.
Then $u$ agrees a.e. on $\S^1$ with a finite Blaschke product:
there exist $\theta\in\mathbb R$, an integer $d\ge0$, and points
\[
\alpha_1,\ldots,\alpha_d\in\mathbb D
\]
such that
\[
u(z)
=
e^{\i\theta}
\prod_{j=1}^d
\frac{z-\alpha_j}{1-\overline{\alpha_j}z}
\qquad\text{for a.e. }z\in\S^1.
\]
If $d=0$, then $u$ is constant.
\end{lemma}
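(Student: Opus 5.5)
The approach is to show that $D(u)=0$ forces $u$ to be the boundary trace of a bounded holomorphic function on $\mathbb D$ that is inner and has finite Dirichlet energy, and then to identify such functions as finite Blaschke products.

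\emph{Step 1: holomorphic extension.} Write $u(e^{\i t})=\sum_k a_k e^{\i kt}$ with $\sum_k |k||a_k|^2<\infty$, which holds since $u\in W^{\frac12,2}$. Because $D(u)=\sum_{k<0}|k||a_k|^2=0$, we get $a_k=0$ for all $k<0$. Hence $U(z)\coloneqq\sum_{k\ge0}a_kz^k$ is holomorphic on $\mathbb D$, and $U$ coincides with the Poisson extension of $u$. The maximum principle for the Poisson integral gives $|U|\le1$, and the radial limits of $U$ equal $u$ a.e., so $|U^*|=1$ a.e. Thus $U$ is an inner function. Moreover
\[
\int_{\mathbb D}|U'|^2=\pi\sum_k k|a_k|^2<\infty .
\]

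\emph{Step 2: an inner function of finite Dirichlet energy is a finite Blaschke product.} This is the main step. By the canonical factorization, $U=e^{\i\theta}B S_\mu$, where $B$ is a Blaschke product and $S_\mu$ is a singular inner function. I would argue via the area formula. The Dirichlet integral $\int_{\mathbb D}|U'|^2$ equals the area of $U(\mathbb D)$ counted with multiplicity, $\int_{\mathbb D}n_U(w)\dd w$. By Frostman's theorem, for $w$ outside a set of capacity zero, the function $(U-w)/(1-\bar wU)$ is a Blaschke product. Its zero count $n_U(w)$ is therefore finite for a.e.\ $w$, and since the integral is finite, $n_U(w)$ is finite for some $w$. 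If $U$ had infinitely many zeros, or a nontrivial singular factor, then $n_U(w)=\infty$ for every $w$ outside a capacity-zero set; for a singular factor one again applies Frostman, since the Frostman shift of a nonconstant inner function that is not a finite Blaschke product has infinitely many zeros. This would contradict finiteness of the area integral, which has positive measure to work with.

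Alternatively, one can use the degree directly. By \Cref{la:E12vsdefect}, $\deg u=\sum_k k|a_k|^2$ is a finite integer $d$. Apply the argument principle on circles $|z|=r\to1$: by VMO/$H^{1/2}$ continuity of the degree, the winding numbers of $U(re^{\i t})$ converge to $d$, so $U$ has exactly $d$ zeros in $\mathbb D$. Dividing by the finite Blaschke product $B$ formed from these zeros leaves a zero-free inner function $G=U/B$. This $G$ still lies in $W^{\frac12,2}$, being a product with a smooth unimodular function, and it has degree $0$. Then $\log G$ is holomorphic with real part $\log|G|\le0$, and its boundary values are purely imaginary, equal to $\i\phi$ with $\phi\in W^{\frac12,2}$ real; this lift uses that degree-zero $H^{1/2}\cap$VMO maps lift. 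The Poisson extension of $\i\phi$ has real part $0$, so $\log|G|\equiv0$ and $G$ is a unimodular constant. I expect the most delicate point to be justifying convergence of the degrees on $|z|=r$ and the identification $\log G=P[\i\phi]$; I would handle both by noting that $U_r\to u$ in $W^{\frac12,2}$, using \Cref{la:continuityBuH12}-type continuity of the degree.

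\emph{Step 3: conclusion.} Thus $u=e^{\i\theta}\prod_{j=1}^d\frac{z-\alpha_j}{1-\overline{\alpha_j}z}$ a.e. with $d=\deg u\ge0$. If $d=0$, then $u=e^{\i\theta}$ is constant.
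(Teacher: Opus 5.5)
Your argument is correct. The first step is the same as the paper's: all negative modes vanish, so $u$ is the trace of the holomorphic function $U=\sum_{k\ge0}a_kz^k$. After that you take a different route.

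\emph{The paper's route.} It observes that $\partial_\nu U=-\i\partial_t u$ on $\S^1$, which is parallel to $u$. Hence $u$ is a half-harmonic map into $\S^1$, and the paper cites the Millot--Sire classification of such maps as Blaschke products. Everything beyond this observation is delegated to that reference.

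\emph{Your route.} You stay inside classical function theory at the $W^{\frac12,2}$ level. You note that $U$ is an inner function with finite Dirichlet integral, and show directly that such a function is a finite Blaschke product. You do this either via the area formula and Frostman's theorem, or via the VMO degree and the argument principle.

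\emph{Comparison.} The paper's proof is two lines and fits the half-harmonic-map framework used throughout this literature. Yours is self-contained modulo classical results and avoids the regularity theory of half-harmonic maps. In the second version it also identifies the number of Blaschke factors with $\deg u$ explicitly.

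Three points need tidying.

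\textbf{Frostman version.} The sentence ``its zero count is therefore finite for a.e.\ $w$'' has the logic inverted. Finiteness of $n_U$ a.e.\ comes from $\int n_U<\infty$. Then choose one $w$ off the capacity-zero set: $U_w$ is a Blaschke product with finitely many zeros, hence a finite one. So $U$ is a finite Blaschke product too, being a disc automorphism applied to $U_w$.

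\textbf{Degree version, convergence of winding numbers.} The convergence of winding numbers on $|z|=r$ requires that $|U(re^{\i t})|\to1$ uniformly as $r\to1$. This is the VMO property of the Poisson extension. Since $U_r$ is not $\S^1$-valued, \Cref{la:continuityBuH12} does not apply to it as stated.

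\textbf{Degree version, the logarithm step.} The identity $\log G=P[\i\phi]$ cannot be read off from boundary values alone; it is precisely what fails for a singular inner factor. It is simpler to avoid logarithms altogether. The function $g=u\overline{B}$ is the trace of $G\in H^\infty$, so it has only nonnegative Fourier modes. Then $0=\deg g=\sum_{k\ge1}k|\hat g_k|^2$ forces $g$ to be constant. Alternatively, $\log|G|$ is harmonic and tends to $0$ uniformly at $\partial\mathbb D$, hence vanishes identically.
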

\begin{proof}
This is also well-known, but we discuss the proof for convenience.

If $D(u)=0$ then by the definition of $D$ we can write
\[
 u(z) = \sum_{k=0}^\infty a_k z^k, \qquad z \in \S^1.
\]
Denote by $U$ the holomorphic extension which has the same formula
\[
 U(z) = \sum_{k=0}^\infty a_k z^k, \qquad |z| \leq 1.
\]
We now compute the boundary normal derivative. Since
\[
U(re^{\i t})=\sum_{k\ge0}a_k r^k e^{\i kt},
\]
we have
\[
\partial_\nu U(e^{\i t})
=
\partial_r U(re^{\i t})\big|_{r=1}
=
\sum_{k\ge1}k a_k e^{\i kt}.
\]
On the other hand, it holds
\[
\partial_t \left[ u(e^{\i t}) \right] 
=
\sum_{k\ge1}\i k a_k e^{\i kt}.
\]
Therefore we have
\[
\partial_\nu U =-\i\partial_t u
\qquad\text{on }\S^1.
\]
Thus we obtain
\[
 \begin{cases}
  \lap U = 0 \quad &\text{in ${\mathbb D}$}\\
  u \perp \partial_\nu U = 0 \quad &\text{on $\S^1$}.
 \end{cases}
\]
Hence $u$ is a half-harmonic map into $\S^1$, and by \cite[Theorem 4.25]{MS15}, $u$ is a Blaschke product.
\end{proof}

From \cite{DSW23}, we recall the following stability result.
\begin{lemma}[The defect controls the distance to Blaschke products]
\label{la:B-controls-Blaschke}
Let $u\in \dot H^{\frac12}(\S^1,\S^1)$ with positive degree $\deg u = d\geq 1$. Let $D$ be the defect from \Cref{def:defect}.

\begin{itemize}
\item If $d = 1$, then there exists a universal
constant $C>0$ such that
\[
\inf_{B_0\in{\rm B}_1^+}
\|u-B_0\|_{\dot H^{\frac12}(\S^1)}^2
\le
C\, D(u).
\]
\item if $d\geq 2$, assuming additionally
\begin{equation}\label{hyp:smallness}
\inf_{B_0\in{\rm B}_d^+}\|u-B_0\|_{\dot H^{\frac12}(\S^1)}^2 < \eps_{d},
\end{equation}
we get similarly
\begin{equation}
 \inf_{B_0\in{\rm B}_d^+}
\|u-B_0\|_{\dot H^{\frac12}(\S^1)}^2
\le
C\, D(u).
\end{equation}
\end{itemize}
\end{lemma}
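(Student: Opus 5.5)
The plan is to treat the two cases by different methods: an exact Fourier computation after a conformal normalization when $d=1$, and a perturbative argument around the manifold $\mathscr{B}_d^+$ when $d\ge2$. Both cases rest on the fact that $D$ is invariant under precomposition with domain M\"obius maps $\varphi$. This holds because, by \eqref{eq:hhvsdefect}, $8\pi^2 D(u)=E_{1/2}(u)-4\pi^2\deg u$, and both $E_{1/2}$ and the degree are M\"obius invariant. Likewise the distance to $\mathscr{B}_d^+$ in $\dot H^{\frac12}$ is unchanged under $u\mapsto u\circ\varphi$, since $\dot H^{\frac12}$ is conformally invariant and $\mathscr{B}_d^+$ is stable under precomposition.

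\emph{Case $d=1$.} Write $u=\sum_k a_k e^{\i k t}$. First I would normalize $u$ by a M\"obius map so that $a_0=0$. This is the Douady--Earle conformal barycenter: consider the map $a\mapsto \frac{1}{2\pi}\int u\circ\varphi_a$ on $\mathbb D$. As $a\to\partial\mathbb D$, the averages of $u\circ\varphi_a$ concentrate near the values of $u$ around the boundary point, so the map has degree one on large circles. A topological degree argument then gives a zero, with VMO-continuity of $u$ replacing continuity.

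After normalization, $|u|=1$ gives $\sum|a_k|^2=1$, and \eqref{eq:degreefourier} gives $\sum k|a_k|^2=1$. Subtracting the two identities yields
\[
\sum_{k\ge2}(k-1)|a_k|^2=|a_0|^2+\sum_{k<0}(|k|+1)|a_k|^2\le 2D(u).
\]
Hence $\sum_{k\neq 1}|k|\,|a_k|^2\lesssim D(u)$, that is, $\|u-a_1z\|_{\dot H^{1/2}}^2\lesssim D(u)$. Finally I replace $a_1 z$ by $\frac{a_1}{|a_1|}z\in\mathscr{B}_1^+$. The error is
\[
(1-|a_1|)^2\le 1-|a_1|^2=\sum_{k\ne1}|a_k|^2\lesssim D(u),
\]
which finishes this case.

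\emph{Case $d\ge2$.} Here I would argue by contradiction and linearization. Suppose there are $u_n$ with $\delta_n^2:=\operatorname{dist}^2(u_n,\mathscr{B}_d^+)\to0$ and $D(u_n)/\delta_n^2\to0$. Pick almost-nearest $B_n\in\mathscr{B}_d^+$. After a M\"obius normalization I would also arrange that the zeros of $B_n$ do not all escape to $\partial\mathbb D$.

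Then write $u_n=B_ne^{\i\phi_n}$ with real $\phi_n$, where $\phi_n$ is orthogonal in $\dot H^{\frac12}$ to the tangent space $T_{B_n}\mathscr{B}_d^+$ by near-minimality, and $\|\phi_n\|\approx\delta_n$. Expanding $D(B e^{\i\phi})=D(B)+Q_B(\phi)+O(\|\phi\|^3)$, with $D(B)=0$ and vanishing first variation (Blaschke products minimize $D$), reduces the claim to a uniform coercivity estimate
\[
Q_B(\phi)\ge c\|\phi\|^2_{\dot H^{1/2}}\qquad\text{for }\phi\perp T_B\mathscr{B}_d^+ .
\]
Rescaling $\phi_n/\delta_n$ then contradicts $D(u_n)/\delta_n^2\to0$.

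\emph{Main obstacle.} I expect the key step to be this nondegeneracy. It requires showing that the kernel of the Jacobi operator of half-harmonic maps at $B$ is exactly the $(2d+1)$-dimensional tangent space of $\mathscr{B}_d^+$, which I would attempt via the holomorphic characterization underlying \Cref{la:zero-or-equality-defect-blaschke}. It also requires uniformity of $c$ over the noncompact family of $B$'s; zeros may collide or degenerate, and this is why the statement is only local, with a threshold $\eps_d$.
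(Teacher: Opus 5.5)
Your treatment of $d=1$ is correct and complete, and it differs from the paper. The paper does not argue directly: it reduces the lemma, via $8\pi^2D(u)=E_{\frac12}(u)-E_{\frac12}(z^d)$, to \cite[Theorems 1.3 and 1.5]{DSW23}. Your normalization is sound. Indeed, $\frac1{2\pi}\int_{\S^1}u\circ\varphi_a=U(a)$ for the harmonic extension $U$, by the mean value property of $U\circ\varphi_a$. Moreover $U$ has a zero, because for $u\in\mathrm{VMO}$ one has $|U|\to1$ uniformly at $\partial\mathbb D$, with winding number $\deg u=1$ near the boundary. After that, $\sum|a_k|^2=1=\sum k|a_k|^2$ and $a_0=0$ give $\sum_{k\ne1}|k|\,|a_k|^2\le5D(u)$ and $(1-|a_1|)^2\le5D(u)$ (take any unimodular multiple of $z$ if $a_1=0$). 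This is an elementary proof with an explicit constant and no spectral input.

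For $d\ge2$, however, there is a genuine gap. The step that carries the whole statement is the coercivity $Q_B(\phi)\ge c\|\phi\|^2_{\dot H^{1/2}}$ on the complement of $T_B\mathscr{B}_d^+$, with $c$ independent of $B$. You only name it as ``the main obstacle'', and it is precisely what the paper imports from \cite[Theorem 1.5]{DSW23}.

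Moreover, the contradiction scheme cannot be closed by non-degeneracy at a single limiting Blaschke product, because none need exist. A M\"obius normalization can pin one zero of $B_n$ at the origin, but hyperbolic distances between zeros are M\"obius invariant and may diverge. For example, $B_n(z)=z\,\frac{z-a_n}{1-\overline{a_n}z}$ with $a_n\uparrow1$ converges away from $z=1$ to the degree-one map $-z$, while a degree-one bubble concentrates at $1$. The threshold $\eps_d$ does not prevent this: it forces $u_n$ to be near the family, not $B_n$ to stay in a compact part of it. So your explanation of the locality is off. You would need coercivity uniform along such bubbling sequences (e.g.\ via a decoupling of the quadratic form across scales), or else cite \cite{DSW23}.

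Finally, the expansion $D(Be^{\i\phi})=Q_B(\phi)+O(\|\phi\|^3)$ is not justified in the critical space $\dot H^{1/2}$, which does not control $L^\infty$. It is cleaner to write $u=B+w$ and use the exact identity $D(B+w)=\sum_{k<0}|k|\,|\hat w_k|^2$, valid since $B$ has no negative frequencies. Then the only nonlinearity is the constraint $2\Re(\overline{B}w)=-|w|^2$.
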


\begin{proof}
From \eqref{eq:1/2energyfourier} we deduce that
\begin{equation}\label{eq:energy_zk}
E_{1/2}(e^{\i kt}) = 4\pi^2|k|.
\end{equation}
Thus, by \eqref{eq:hhvsdefect} and \eqref{eq:energy_zk},
\[
 8 \pi^2 D(v) = E_{\frac{1}{2}}(v)-E_{\frac{1}{2}}(z^k)
\]
By \cite[Theorem 1.3]{DSW23} for any degree $1$ map $v$
\[
 8 \pi^2 D(v) = E_{\frac{1}{2}}(v)-E_{\frac{1}{2}}(z) \geq c \inf_{B_0\in{\rm B}_1^+}
\|u-B_0\|_{\dot H^{\frac12}(\S^1)}^2
\]
and for any degree $d \geq 2$, by \cite[Theorem 1.5]{DSW23}, under the assumption \eqref{hyp:smallness}
\[
 8 \pi^2 D(v) = E_{\frac{1}{2}}(v)-E_{\frac{1}{2}}(z^k) \geq c \inf_{B_0\in{\rm B}_k^+}
\|u-B_0\|_{\dot H^{\frac12}(\S^1)}^2
\]
For degree-one maps, the half-harmonic energy satisfies
\[
E_{\frac12}(u)
=
4\pi^2\deg u+8\pi^2 D(u).
\]
Since $\deg u=1$
and $E_{\frac12}(\id)=4\pi^2$, we obtain
\[
E_{\frac12}(u)-E_{\frac12}(\id)
=
8\pi^2 D(u).
\]
By the quantitative stability theorem for degree-one half-harmonic maps,
there exists a universal constant $C>0$ such that
\[
\inf_{B_0\in{\rm B}_1^+}
\|u-B_0\|_{\dot H^{\frac12}(\S^1)}^2
\le
C\left(
E_{\frac12}(u)-E_{\frac12}(\id)
\right).
\]
Using the identity above gives
\[
\inf_{B_0\in{\rm B}_1^+}
\|u-B_0\|_{\dot H^{\frac12}(\S^1)}^2
\le
C\, D(u).
\]
This proves the lemma.
\end{proof}

We will need to sum over $D(v^n)$ for smooth maps $v\colon \S^1 \to \S^1$. To make the sums convergent, we observe
\begin{lemma}\label{la:Bvroughestimate}
Let $v \in \lip(\S^1,\S^1)$ then for any $n \geq 1$
\[
 D(v^n) \aleq C_{v} n
\]
\end{lemma}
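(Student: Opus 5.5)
The plan is to bound the defect directly from its definition by the quadratic energy, and then to bound the quadratic energy of $v^n$ by the Lipschitz constant of $v^n$, which grows linearly in $n$.

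First, by \Cref{def:defect} and \eqref{eq:1/2energyfourier},
\[
D(v^n)=\sum_{k<0}|k|\,|a_k(v^n)|^2 \le \sum_{k\in\Z}|k|\,|a_k(v^n)|^2 = \frac{1}{4\pi^2}E_{1/2}(v^n).
\]
This step needs no sign assumption on the degree. It therefore suffices to show $E_{1/2}(v^n)\aleq C_v\, n$.

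Second, since $|v|=1$, for $x,y\in\S^1$ we have the elementary bound
\[
|v^n(x)-v^n(y)| \le \min\{2,\; n|v(x)-v(y)|\} \le \min\{2,\; nL|x-y|\},
\]
where $L=\lip v$. The factor $n$ comes from telescoping $a^n-b^n=(a-b)\sum_{j} a^{j}b^{n-1-j}$ with $|a|=|b|=1$. Hence
\[
E_{1/2}(v^n)\le \int_{\S^1}\int_{\S^1}\frac{\min\{4,\,n^2L^2|x-y|^2\}}{|x-y|^2}\dx\dy .
\]
Fix $x$ and split the $y$-integral at $|x-y|=r\coloneqq 1/(nL)$, which we may assume is at most the diameter, i.e.\ $n$ is large (small $n$ is trivial).
\begin{itemize}
\item The near region $|x-y|<r$ contributes at most $n^2L^2\cdot 2r\aleq nL$.
\item The far region contributes at most $4\int_{|x-y|>r}|x-y|^{-2}\dy\aleq 1/r = nL$.
\end{itemize}
Integrating in $x$ gives $E_{1/2}(v^n)\aleq nL$.

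Combining the two steps gives $D(v^n)\aleq L\,n$, with $C_v=\lip v$. An alternative route would be to use $E_{1/2}(w)\aleq \|w'\|_{L^2}\|w\|_{L^2}$ via Fourier (Cauchy–Schwarz on $\sum|k||a_k|^2$), together with $\|(v^n)'\|_{L^2}\le nL\sqrt{2\pi}$ and $\|v^n\|_{L^2}=\sqrt{2\pi}$, which gives the same linear bound.

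There is no real obstacle here. The only point needing care is to avoid the naive bound $E_{1/2}\aleq \|w'\|_{L^2}^2\sim n^2$: one must use the interpolation between the Lipschitz bound and the $L^\infty$ bound, either by the splitting above or by Cauchy–Schwarz.
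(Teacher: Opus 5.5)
Your proof is correct and has the same two-step structure as the paper's: bound $D(v^n)$ by a multiple of $E_{1/2}(v^n)$, then get linear growth in $n$ by interpolating between the $L^\infty$ (or $L^2$) bound and the Lipschitz bound $\|(v^n)'\|_{L^\infty}\le nL$. The paper does the second step by citing $[w]_{W^{1/2,2}}\aleq \|w\|_{L^2}^{1/2}\|w'\|_{L^2}^{1/2}$, which is exactly your ``alternative route''; your splitting of the Gagliardo integral at scale $1/(nL)$ is a direct proof of that same interpolation.
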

\begin{proof}
We have by definition
\[
 D(v^n) \leq E_{1/2} (v^n).
\]
By interpolation, we obtain
\[
 [v^n]_{W^{\frac{1}{2},2}} \aleq \|v^n\|_{L^2}^{\frac{1}{2}} \|\nabla (v^n)\|_{L^2}^{\frac{1}{2}} \aleq \sqrt{n} \|\nabla v\|_{L^{\infty}}^{\frac{1}{2}}.
\]
Thus it holds
\[
 E_{\frac{1}{2}}(v^n) \aleq n \|v\|_{\lip}^2.
\]
\end{proof}

\begin{lemma}[Defect inequality under powers]\label{la:defect}
Let $v \in W^{\frac{1}{2},2}(\S^1,\S^1)$. Taking the defect $D$ from \Cref{def:defect}, for every integer $n\ge1$,
\[
D(v^n)\le n^2\, D(v).
\]
If for $n\geq 2$ we have equality $D(v^n) = n^2\, D(v)$ then $v$ is a Blaschke product.
\end{lemma}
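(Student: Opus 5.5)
The plan is to characterize the defect variationally through \Cref{la:holomextvsdeg}, and then to test that characterization for $v^n$ with the $n$-th power of the harmonic extension of $v$.

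\textbf{Step 1: variational formula for $D$.} Let $v$ be smooth with $v(e^{\i t})=\sum_k a_k e^{\i kt}$, and let $V$ be its harmonic extension $V(re^{\i t})=\sum_k a_k r^{|k|}e^{\i kt}$. A direct computation gives $\int_{\mathbb D}|\nabla V|^2 = 2\pi\sum_k |k||a_k|^2$. Combining this with \eqref{eq:holomextvsdeg} and \eqref{eq:degreefourier} gives
\[
\int_{\mathbb D}|\partial_{\bar z}V|^2=\frac{\pi}{2}\sum_k\brac{|k|-k}|a_k|^2=\pi D(v).
\]
Now let $W$ be any extension of $v$. In \eqref{eq:holomextvsdeg} the degree term depends only on $v$, and the harmonic extension minimizes the Dirichlet energy. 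Hence
\[
\pi D(v)=\min\Big\{\int_{\mathbb D}|\partial_{\bar z}W|^2 : W|_{\S^1}=v\Big\}.
\]

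\textbf{Step 2: the inequality.} Let $V$ be the harmonic extension of $v$. Since $v$ takes values in $\S^1$, the values of $V$ are averages of points of the unit circle, so $|V|\le1$ by convexity (Poisson representation). The map $V^n$ is an admissible extension of $v^n$, and $\partial_{\bar z}(V^n)=nV^{n-1}\partial_{\bar z}V$. Therefore
\[
\pi D(v^n)\le n^2\int_{\mathbb D}|V|^{2(n-1)}|\partial_{\bar z}V|^2\le n^2\,\pi D(v).
\]

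\textbf{Step 3: equality for $n\ge2$.} If $D(v^n)=n^2D(v)$, then
\[
\int_{\mathbb D}\brac{1-|V|^{2(n-1)}}|\partial_{\bar z}V|^2=0 .
\]
If $V$ is constant, then $v$ is constant, which is the degree-$0$ Blaschke product. Otherwise we use that $|V|^2$ is subharmonic and $|V|\le 1$. If $|V(z_0)|=1$ at some interior point $z_0$, then $|V|^2$ attains an interior maximum, so $|V|^2\equiv 1$ by the strong maximum principle. Then $0=\Delta|V|^2=2|\nabla V|^2$ (using $\Delta V=0$), so $V$ is constant, a contradiction. Hence $|V|<1$ in $\mathbb D$ when $v$ is nonconstant. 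It follows that $\partial_{\bar z}V=0$ a.e., so $D(v)=0$ by Step 1, and \Cref{la:zero-or-equality-defect-blaschke} shows that $v$ is a Blaschke product.

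\textbf{Step 4: passing to $W^{\frac12,2}$.} This regularity step is the main technical point. \Cref{la:holomextvsdeg} is stated for continuous $v$. For $v\in W^{\frac12,2}(\S^1,\S^1)$ I would argue directly rather than by approximation:
\begin{itemize}
\item the harmonic extension $V$ lies in $W^{1,2}(\mathbb D)$;
\item the Fourier identities in Step 1 hold verbatim, so $\int_{\mathbb D}|\partial_{\bar z}V|^2=\pi D(v)$ holds directly, and \eqref{eq:degreefourier} holds for $H^{1/2}$ maps via the VMO degree;
\item \Cref{la:holomextvsdeg} extends to $W^{1,2}$ extensions, because the Jacobian integral is continuous under $W^{1,2}$ convergence and equals $\pi\deg$ on the dense class of smooth maps $\S^1\to\S^1$;
\item $V^n$ is a $W^{1,2}$ extension of $v^n$ since $|V|\le1$.
\end{itemize}
Alternatively, one proves the inequality for smooth $v$ and passes to the limit using density of smooth maps in $W^{\frac12,2}(\S^1,\S^1)$ together with \Cref{la:continuityBuH12}. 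The equality case, however, must be run with the direct argument above, because equality is not preserved under approximation.
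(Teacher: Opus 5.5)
Your proposal is correct and follows the paper's proof essentially step by step: you take the harmonic extension $V$ with $|V|\le 1$, use the identity $\int_{\mathbb D}|\partial_{\bar z}V|^2=\pi D(v)$, test with $V^n$ as an extension of $v^n$ via the Dirichlet principle and \eqref{eq:holomextvsdeg}, and handle equality with the strong maximum principle. The differences are minor and in your favor. You obtain $\int_{\mathbb D}|\partial_{\bar z}V|^2=\pi D(v)$ from the Dirichlet energy and \eqref{eq:degreefourier} instead of expanding $\partial_{\bar z}V=\sum_m m a_{-m}\bar z^{m-1}$. You also treat the constant case and the non-smooth equality case explicitly, which the paper's ``by approximation'' remark passes over.
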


\begin{proof}
Observe that if $v \in W^{\frac{1}{2},2}(\S^1,\S^1)$ then $v^n \in W^{\frac{1}{2},2}(\S^1,\S^1)$, and by approximation we may first assume $v$ is smooth and take the limit to make every inequality hold for $v$. We write
\[
\begin{split}
v(e^{\i t})
&= \sum_{k\ge0}a_k e^{\i kt}
+
\sum_{m\ge1}a_{-m}e^{-\i mt}
=\sum_{k\ge0}a_k e^{\i kt} + \sum_{m\ge1}a_{-m}\overline{e^{\i mt}}.
\end{split}
\]
Set
\begin{equation}\label{eq:Vexpansion}
 V(z) \coloneqq \sum_{k\ge0}a_k z^{k}
+
\sum_{m\ge1}a_{-m}\overline{z^{m}}.
\end{equation}
Then $V\colon \mathbb{D} \to \C$ verifies $\partial_z \partial_{\bar z} V=0$ (with $\partial_{\bar{z}} \coloneqq   \frac{1}{2}  \brac{\partial_{x} + \i \partial_y}$ and $\partial_z =\frac{1}{2}  \brac{\partial_{x} - \i \partial_y}$). Hence it is the harmonic extension of $v\colon \partial \mathbb{D} \to \C$. In particular we have from the maximum principle $\sup_{\mathbb{D}} |V| \leq \sup_{\partial \mathbb{D}} |v| = 1$.
We have
\[
 \partial_{\bar{z}} V = \sum_{m \geq 1} a_{-m} m \bar{z}^{m-1}.
\]
Moreover for $\ell,k \geq 0$, it holds
\[
 \int_{\mathbb{D}} \bar{z}^\ell z^k = \int_0^1 r^{1+\ell + k} \int_{0}^{2\pi}
 \overline{e^{\i \theta \ell}} e^{\i \theta k} \dd\theta \dd r  = \frac{2\pi}{2+\ell+k} \delta_{\ell k}.
\]
Thus, recalling \Cref{def:defect}, we obtain
\begin{equation}\label{eq:BvvsHolom}
\int_{\mathbb{D}}|\partial_{\bar{z}} V|^2 = \sum_{m \geq 1} |a_{-m}|^2 |m|^2 \frac{2\pi}{2m}
=
\pi
\sum_{m\ge1}m|a_{-m}|^2
=
\pi D(v).
\end{equation}
Now fix $n \geq 1$, let $\tilde{V}_n\colon \mathbb{D} \to \C$ be the harmonic extension as above of $v^n\colon \S^1 \to \S^1$, then by \eqref{eq:BvvsHolom} we have
\[
 \pi D(v^n) = \int_{\mathbb{D}}|\partial_{\bar{z}} \tilde{V}_n|^2 \overset{\eqref{eq:holomextvsdeg}}{=} \frac{1}{4} \int_{\mathbb{D}} |\nabla \tilde{V}_n|^2 - \frac{\pi}{2} \deg (v^n).
\]
Since $\tilde{V}_n$ is the harmonic extension, the Dirichlet energy can be estimate from above, observe that $V^n\colon \mathbb{D} \to \C$ is another extension of $v^n$, and $|V^n| =|V|^n \leq 1$. Hence we have
\begin{equation}\label{eq:ineqB}
\begin{split}
\pi D(v^n) &\leq \frac{1}{4} \int_{\mathbb{D}} |\nabla V^n|^2 - \frac{\pi}{2} \deg v^n\\[2mm]
 &\overset{\eqref{eq:holomextvsdeg}}{=} \int_{\mathbb{D}}|\partial_{\bar{z}} V^n|^2\\[2mm]
 & =\int_{\mathbb{D}}|n V^{n-1} \partial_{\bar{z}} V|^2\\[2mm]
& \overset{n \geq 1}{\leq} n^2 \int_{\mathbb{D}}| \partial_{\bar{z}} V|^2\\[2mm]
& \overset{\eqref{eq:BvvsHolom}}{=} n^2 \pi D(v).
\end{split}
 \end{equation}
Assume now that for some $n\ge2$ equality holds, i.e.\ $D(v^n)=n^2D(v)$. Then by \eqref{eq:ineqB}, we have
\[
\int_{\mathbb D}
|V|^{2n-2}|\partial_{\bar z}V|^2
=
\int_{\mathbb D}
|\partial_{\bar z}V|^2 ,
\]
that is \[
\int_{\mathbb D}
\left(1-|V|^{2n-2}\right)
|\partial_{\bar z}V|^2
=
0.
\]
Since by strong maximum principle $|V| < 1$ inside of $\mathbb{D}$, we conclude $\partial_{\bar{z}} V =0$, i.e., $V$ is holomorphic. From the definition of $V$ \eqref{eq:Vexpansion} we get $a_{-m} = 0$ for all $m \geq 1$. Thus $v(e^{\i t})
= \sum_{k\ge0}a_k e^{\i kt}$ and with \Cref{def:defect} we obtain $D(v) = 0$. Thus $v$ is a finite Blaschke product by \Cref{la:zero-or-equality-defect-blaschke}.
\end{proof}

\subsection{Fourier expansion}

A substantial part of our proof rests on the following Fourier identity.
\begin{proposition}\label{pr:Fourierexpansion}
Let $p\in (1,\infty)$. There are real coefficients $\gamma_{p,n}$, depending only on $p$
and $n$, such that for any $\theta \in \R$
\begin{equation}\label{eq:eithetaexp}
|e^{i\theta}-1|^p
=
\sum_{n=1}^{\infty}
\gamma_{p,n}(1-\cos(n\theta)).
\end{equation}
They are given by
\[
\gamma_{p,n}
=
\frac{-2(-1)^n \Gamma(p+1)}
{
\Gamma\left(1+\frac p2+n\right)
\Gamma\left(1+\frac p2-n\right)
}=
\begin{cases} 
	\displaystyle \frac{2}{\pi}
\frac{\Gamma(p+1)}
{
\Gamma\left(1+\frac p2+n\right)
}
\sin\left(\pi \frac{p}{2} \right)
\Gamma\left(n-\frac p2\right)  &\text{ if }p \neq 2,\\[4mm]
2  &\text{ if }p=2 \text{ and }n=1,\\
0  &\text{ if }p=2 \text{ and }n\geq 2.
\end{cases}
\]
If the second Gamma factor has a pole, the coefficient is interpreted as
$0$.

We have the following convergence for any $p \in (1,\infty)$ and $\sigma<p$,
\begin{equation}\label{eq:nsquaregamman}
 \sum_{n =1}^\infty n^\sigma \abs{\gamma_{p,n}} < \infty.
\end{equation}
For $1<p<2$,
\begin{equation}\label{eq:gammasign1lpl2}
\gamma_{p,n}>0
\qquad\text{for every }n\ge1.
\end{equation}
For $p=2$,
\begin{equation}\label{eq:gammasignpeq2}
\gamma_{2,1}=2,
\qquad
\gamma_{2,n}=0
\qquad\text{for every }n\ge2.
\end{equation}
For $2<p<4$,
\begin{equation}\label{eq:gammasign2lpl4}
\gamma_{p,1}>0,
\qquad
\gamma_{p,n}<0
\qquad\text{for every }n\ge2.
\end{equation}

Finally, for every integer $\ell\ge1$ such that $2\ell<p$,
\begin{equation}\label{eq:momentidentities}
\sum_{n=1}^{\infty}n^{2\ell}\gamma_{p,n}=0.
\end{equation}
and in particular, for $2<p\le4$,
\begin{equation}\label{eq:gammasignsumeeq0}
\sum_{n=1}^{\infty}n^2\gamma_{p,n}=0.
\end{equation}
\end{proposition}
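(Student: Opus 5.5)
Write $|e^{\i\theta}-1|^p=(2-2\cos\theta)^{p/2}=2^p|\sin(\theta/2)|^p$. This is even, $2\pi$-periodic and continuous, so we will compute its cosine Fourier series with \Cref{la:TrigGammaIntegral}. Substituting $x=\theta/2$ gives
\[
\frac1\pi\int_{-\pi}^{\pi}2^p|\sin(\theta/2)|^p\cos(n\theta)\,\dd\theta
=\frac{2^{p+1}}{\pi}\int_0^\pi\sin^p(x)\cos(2nx)\,\dx
=\frac{2(-1)^n\Gamma(p+1)}{\Gamma(1+\frac p2+n)\Gamma(1+\frac p2-n)},
\]
which is $c_n$ with $c_n=-\gamma_{p,n}$ for $n\ge1$. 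Hence $f(\theta)=c_0/2+\sum_{n\ge1}c_n\cos(n\theta)$. Evaluating at $\theta=0$, where $f(0)=0$, gives $c_0/2=-\sum c_n=\sum\gamma_{p,n}$, and this yields \eqref{eq:eithetaexp}. We justify evaluation at $\theta=0$ after proving absolute summability, which is the next step.

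For the second form of $\gamma_{p,n}$ with $p\ne2$, apply the reflection formula \eqref{eq:reflectionformula} with $z=n-\frac p2$:
\[
\frac{1}{\Gamma(1+\frac p2-n)}=\frac{\Gamma(n-\frac p2)\sin(\pi(n-\frac p2))}{\pi},
\qquad
\sin\bigl(\pi(n-\tfrac p2)\bigr)=-(-1)^n\sin(\tfrac{\pi p}{2}).
\]
Plugging this in removes the sign factor $(-1)^n$ and gives the stated expression. For $p=2$ we use $1/\Gamma(2-n)=0$ for $n\ge2$, and $\gamma_{2,1}=2\cdot2/\Gamma(3)\cdot\ldots=2$ by direct evaluation.

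Summability \eqref{eq:nsquaregamman} follows from $\Gamma(n-\frac p2)/\Gamma(n+1+\frac p2)\sim n^{-p-1}$. Then $n^\sigma|\gamma_{p,n}|\lesssim n^{\sigma-p-1}$, which is summable for $\sigma<p$. In particular ($\sigma=0$) the series converges absolutely and uniformly, which justifies the pointwise identity.

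The signs come from the reflected form. For $n\ge1>\frac p2$ and $1<p<2$, every Gamma factor is positive and $\sin(\pi p/2)>0$, which gives \eqref{eq:gammasign1lpl2}. For $2<p<4$ we have $\sin(\pi p/2)<0$. For $n\ge2$, $n-\frac p2>0$ so $\Gamma(n-\frac p2)>0$ and $\gamma_{p,n}<0$. For $n=1$, $1-\frac p2\in(-1,0)$ gives $\Gamma<0$, hence $\gamma_{p,1}>0$. The case $p=2$ is already done.

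For the moment identities \eqref{eq:momentidentities}, note that for $2\ell<p$ the series $g(\theta)=\sum\gamma_{p,n}(1-\cos n\theta)$ can be differentiated termwise $2\ell$ times, since $\sum n^{2\ell}|\gamma_{p,n}|<\infty$ by \eqref{eq:nsquaregamman}; we even get $C^{2\ell}$ with uniform convergence. Thus $g^{(2\ell)}(0)=\pm\sum n^{2\ell}\gamma_{p,n}$. On the other hand, $g(\theta)=2^p|\sin(\theta/2)|^p=|\theta|^p(1+O(\theta^2))$ near $0$, with $|\theta|^p h(\theta)$ and $h$ smooth and even. Its $2\ell$-th derivative at $0$ vanishes because $2\ell<p$: each derivative of order $\le 2\ell$ is $O(|\theta|^{p-2\ell})\to0$.

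This gives \eqref{eq:momentidentities}. For $2<p<4$, \eqref{eq:gammasignsumeeq0} is the case $\ell=1$. For $p=4$, $\gamma_{4,n}$ vanishes for $n\ge3$, and direct evaluation gives $\gamma_{4,1}=8$ and $\gamma_{4,2}=-2$, so $\sum n^2\gamma_{4,n}=8-8=0$. For $p=4$ this direct check replaces the borderline summability argument.

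The main obstacle is bookkeeping: tracking the signs through the reflection formula, and justifying termwise differentiation at the borderline exponent.
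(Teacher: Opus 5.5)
Your proposal is correct and follows essentially the same route as the paper. Both compute the Fourier coefficients via \Cref{la:TrigGammaIntegral}, evaluate at $\theta=0$ once absolute summability is known, use Euler's reflection formula for the second form and the $n^{-p-1}$ Gamma-ratio asymptotics, read the signs off the reflected form, and differentiate termwise at $\theta=0$ for the moment identities; you treat general $\ell$, whereas the paper only writes out $\ell=1$. Two minor remarks. The reflection formula needs $p\notin 2\Z$; for even $p\ge 4$ one instead notes $\gamma_{p,n}=0$ for $n>p/2$, so summability is trivial. Your separate check at $p=4$ is unnecessary, since $\ell=1$, $p=4$ already satisfies $2\ell<p$.
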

\begin{proof}
For $n \geq 1$, we define
\[
\begin{split}
 \beta_{p,n} \coloneqq & \frac{1}{\pi} {\int_{-\pi}^\pi |e^{\i \sigma}-1|^p \cos(n\sigma) \dd\sigma}.
\end{split}
 \]
We have
\[
|e^{i\theta}-1|^2
=
2-2\cos\theta
=
4\sin^2\frac{\theta}{2}.
\]
We obtain with \eqref{eq:trigintegral}
\[
\begin{split}
 \beta_{p,n} &=  \frac{2^{p}}{\pi} {\int_{-\pi}^\pi \left| \sin\brac{\frac{\sigma}{2}} \right|^p \cos(n\sigma) \dd\sigma}\\[2mm]
 &=  \frac{2^{p+1}}{\pi} \int_{-\frac{\pi}{2}}^{\frac{\pi}{2}} \left|\sin\brac{\sigma}\right|^p \cos(2n\sigma) \dd\sigma\\[2mm]
 &=  \frac{2^{p+1}}{\pi} \int_{0}^{\pi} \sin^p\brac{\sigma}\, \cos(2n\sigma) \dd\sigma\\[2mm]
&=  \frac{2^{p+1}}{\pi}\,
(-1)^n\,
\frac{
\pi\Gamma(p+1)
}{
2^p
\Gamma\left(1+\frac p2+n\right)
\Gamma\left(1+\frac p2-n\right)
}\\[2mm]
&=  
\frac{ 2
	(-1)^n
\Gamma(p+1)
}{
\Gamma\left(1+\frac p2+n\right)
\Gamma\left(1+\frac p2-n\right)
}.
 \end{split}
 \]
The function $\theta \mapsto |e^{\i\theta}-1|^p$ is even and $2\pi$-periodic. Thus, we can write it as a Fourier series
\[
 |e^{\i \theta}-1|^p = \beta_{0,p} +  \sum_{n=1}^\infty \beta_{p,n} \cos(n\theta).
\]
Evaluating this at $\theta = 0$ we thus have
\[
 \beta_{0,p} = -\sum_{n=1}^\infty \beta_{p,n}.
\]
This \emph{formally} leads to
\[
 |e^{\i \theta}-1|^p = \sum_{n=1}^\infty \beta_{p,n} (\cos(n\theta)-1)
\]
or, with
\[
\gamma_{p,n}
\coloneqq -\beta_{p,n} =
-2(-1)^n
\frac{\Gamma(p+1)}
{
\Gamma\left(1+\frac p2+n\right)
\Gamma\left(1+\frac p2-n\right)
}\\
\]
we have \eqref{eq:eithetaexp}.

For this argument to be precise, we need to ensure that
\[
 \sum_{n=1}^\infty \abs{\beta_{p,n}} < \infty.
\]
We use the Euler's reflection formula \eqref{eq:reflectionformula}, but only if $z=1+\frac p2-n \not \in \Z$, i.e.\ when $\frac{p}{2} \not \in \Z$. In that case we find, using that $\sin(a+(1-n) \pi) = (-1)^{1-n} \sin(a)$ for all $n \in \N$,
\[
\frac1{\Gamma\left(1+\frac p2-n\right)}
=
\frac{
\sin\left(\pi\left(1+\frac p2-n\right)\right)
}{
\pi
}
\Gamma\left(n-\frac p2\right)
=(-1)^{n+1}
\frac{
\sin\left(\pi \frac{p}{2}\right)
}{
\pi
}
\Gamma\left(n-\frac p2\right).
\]
Thus
\begin{equation}\label{eq:gammapnv2}
\beta_{p,n}= -
\frac{2}{\pi}
\frac{\Gamma(p+1)}
{
\Gamma\left(1+\frac p2+n\right)
}
\sin\left(\pi \frac{p}{2} \right)
\Gamma\left(n-\frac p2\right) \quad \text{ for } p \not \in 2\Z.
\end{equation}
Since $\abs{\sin\left(\pi\left(1+\frac p2-n\right)\right)}
=\abs{\sin\left(\frac{\pi p}{2}\right)}$
we obtain, whenever the formula has no pole (in which case $\beta_{p,n} = 0$)
\[
\abs{\beta_{p,n}}
\aleq
\abs{\frac{
\Gamma(p+1)\sin\left(\frac{\pi p}{2}\right)
\,
\Gamma\left(n-\frac p2\right)
}{
\Gamma\left(n+1+\frac p2\right)
}} \aleq \abs{\frac{
\Gamma\left(n-\frac p2\right)
}{
\Gamma\left(n+1+\frac p2\right)
}}.
\]
For large $n$, using \eqref{eq:prodformula}, we have
\begin{equation}\label{eq:ratio_Gamma}
\frac{
\Gamma\left(n-\frac p2\right)
}{
\Gamma\left(n+1+\frac p2\right)
}
\aleq
n^{-p-1}.
\end{equation}
Thus certainly for any $p>1$, it holds
\[
 \sum_{n =1}^\infty \abs{\beta_{p,n}} < \infty.
\]
In particular, the estimate \eqref{eq:ratio_Gamma} also implies \eqref{eq:nsquaregamman}.

\underline{Now we prove the estimates \eqref{eq:gammasign1lpl2}, \eqref{eq:gammasignpeq2}, \eqref{eq:gammasign2lpl4}}.
From \eqref{eq:gammapnv2}, we have
\[
\gamma_{p,n}=
\frac{2}{\pi}
\frac{\Gamma(p+1)}
{
\Gamma\left(1+\frac p2+n\right)
}
\sin\left(\pi \frac{p}{2} \right)
\Gamma\left(n-\frac p2\right).
\\
\]
If \underline{$1<p<2$}, then
for every $n\ge1$,
\[
n-\frac p2>0.
\]
We also have
\[
\sin\left(\frac{\pi p}{2}\right)>0.
\]
Therefore, it holds
\[
\gamma_{p,n}>0
\qquad\text{for every }n\ge1.
\]
This gives \eqref{eq:gammasign1lpl2}

If \underline{$p=2$}, then directly
\[
|e^{i\theta}-1|^2
=
2(1-\cos\theta).
\]
Thus
\[
\gamma_{2,1}=2,
\qquad
\gamma_{2,n}=0
\qquad\text{for }n\ge2.
\]
This is \eqref{eq:gammasignpeq2}.

If \underline{$2<p<4$}, we observe $\sin\left(\frac{\pi p}{2}\right)<0$. Since $\Gamma(1-\frac{p}{2}) < 0$, and $\Gamma (n-\frac{p}{2}) > 0$ for $n \geq 2$ we find \eqref{eq:gammasign2lpl4}.

Finally, since \eqref{eq:nsquaregamman} and $p>2$ we have $\sum_{n=1}^{\infty}n^2|\gamma_{p,n}|<\infty$, and thus we can differentiate both sides of \eqref{eq:eithetaexp} at $\theta =0$, and find
\[
 0 = \sum_{n=1}^\infty \gamma_{p,n} n^2.
\]
This \eqref{eq:gammasignsumeeq0}.
\end{proof}

The previous lemma leads to the following elementary representation. Elementary as it is, it is the crucial observation needed to pass from $L^p$ to $L^2$.
\begin{lemma}\label{la:PointwisePowerIdentity}
Let $p>1$, and let $\gamma_{p,n}$ be the coefficients from
\Cref{pr:Fourierexpansion}. If $a,b\in\S^1$, then
\[
|a-b|^p
=
\frac12
\sum_{n=1}^{\infty}
\gamma_{p,n}|a^n-b^n|^2.
\]
\end{lemma}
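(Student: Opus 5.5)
The plan is to reduce the identity to the scalar expansion \eqref{eq:eithetaexp} of \Cref{pr:Fourierexpansion} through rotation invariance of the modulus on $\S^1$. Given $a,b\in\S^1$, I write $a\bar b=e^{\i\theta}$ for some $\theta\in\R$. Since $|b|=1$, I get
\[
|a-b|^p=|b|^p\,|a\bar b-1|^p=|e^{\i\theta}-1|^p .
\]
In the same way, for each $n\ge1$ one has $a^n\bar b^n=e^{\i n\theta}$ and $|b^n|=1$. This gives
\[
|a^n-b^n|^2=|e^{\i n\theta}-1|^2=2-2\cos(n\theta)=2\bigl(1-\cos(n\theta)\bigr).
\]

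Substituting into the right-hand side of the claim, the sum becomes
\[
\frac12\sum_{n=1}^{\infty}\gamma_{p,n}|a^n-b^n|^2=\sum_{n=1}^{\infty}\gamma_{p,n}\bigl(1-\cos(n\theta)\bigr).
\]
By \eqref{eq:eithetaexp}, this equals $|e^{\i\theta}-1|^p=|a-b|^p$, which is the claimed identity.

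The only point needing care is convergence, so that the series manipulations are legitimate. Each term is bounded by $2|\gamma_{p,n}|$. The bound \eqref{eq:nsquaregamman} with $\sigma=0<p$ gives $\sum_n|\gamma_{p,n}|<\infty$. Hence the series converges absolutely and uniformly in $a,b$, and the identity holds pointwise for all $a,b\in\S^1$. I do not expect any real obstacle: all of the analytic content, namely the validity of the cosine expansion at every $\theta$, is already contained in \Cref{pr:Fourierexpansion}.
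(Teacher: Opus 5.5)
Your proposal is correct and follows essentially the same route as the paper: the paper writes $a=e^{\i\alpha}$, $b=e^{\i\beta}$ and reduces to \eqref{eq:eithetaexp} with $\theta=\alpha-\beta$, which is the same as your $a\bar b=e^{\i\theta}$. Your added remark that $\sum_n|\gamma_{p,n}|<\infty$ by \eqref{eq:nsquaregamman} with $\sigma=0$ is a harmless extra check of absolute convergence.
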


\begin{proof}
We write $a=e^{\i\alpha}$ and $b=e^{\i\beta}$. Then we have
\[
|a-b|^p
=
|e^{\i(\alpha-\beta)}-1|^p.
\]
By Lemma \ref{pr:Fourierexpansion}, we obtain 
\[
|e^{\i(\alpha-\beta)}-1|^p
=
\sum_{n=1}^{\infty}
\gamma_{p,n}
\left(
1-\cos(n(\alpha-\beta))
\right).
\]
Moreover it holds
\[
|a^n-b^n|^2
=
|e^{\i n\alpha}-e^{\i n\beta}|^2
=
2-2\cos(n(\alpha-\beta)).
\]
\end{proof}

\section{Sobolev regularity for local minimizers}\label{s:regularity}
Throughout this section, we use
\[
        E_s(v)
        \coloneqq
        \int_{\S^1}\int_{\S^1}
        \frac{|v(x)-v(y)|^p}{|x-y|^2}
        \dd\ell(x)\dd\ell(y),
        \qquad
        p=\frac1s.
\]

The following is an adaptation of the arguments in \cite{BL17}, and the H\"older regularity theory from, e.g., \cite{Sucks23}. Observe that the threshold $s>\frac{1}{4}$ is \emph{better} than the estimate one might expect from \cite{BL17}. This is due to the fact that we work globally on $\S^1$, and thus the ``tail space regularity'', which is kept constant in \cite{BL17} is improving in every step of the bootstrap, with this in mind we indeed obtain the same level of regularity as  expected from \cite{BL17}. What is curious is that the threshold $s>\frac{1}{4}$ in \Cref{th:minH12} seems to be completely independent of the threshold $s>\frac{1}{4}$ in \Cref{th:blaschkemin}.

\begin{theorem}[Minimizers belong to $H^{1/2}$]
\label{th:minH12}
Let $\frac14<s<1$, $p=\frac1s$.
Let $u\in W^{s,p}(\S^1,\S^1)$ be a minimizer of $E_s$ among maps $v\colon \S^1\to\S^1$ with the same degree as $u$.
Then $u\in H^{1/2}(\S^1,\mathbb R^2)$.
\end{theorem}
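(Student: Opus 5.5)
The plan is to split according to whether $p\le 2$ or $p>2$. For $\frac12\le s<1$, i.e.\ $1<p\le 2$, nothing about minimality should be needed: $W^{s,p}(\S^1)$ with $sp=1$ embeds into $W^{\frac12,2}(\S^1)$. Both spaces have differential dimension $s-\frac1p=\frac12-\frac12=0$, and in one dimension the Besov embedding $B^{s}_{p,p}\hookrightarrow B^{1/2}_{2,2}$ holds for $p\le 2$ at equal differential dimension. So the whole difficulty is the range $\frac14<s<\frac12$, i.e.\ $2<p<4$, where genuine regularity of minimizers must be used. There I aim to show that $u\in W^{\sigma,p}(\S^1)$ for some $\sigma>\frac12$. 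Since $u$ is bounded, $p>2$ and $\S^1$ is compact, this gives $u\in W^{\sigma,2}\subset H^{1/2}$, via $\int\int |u(x)-u(y)|^2|x-y|^{-2}\le \brac{\int\int |u(x)-u(y)|^p|x-y|^{-1-\sigma p}}^{2/p}\cdot C$ by H\"older, because $(-2+\frac{2}{p}(1+\sigma p))\cdot\frac{p}{p-2}>-1$ when $\sigma>\frac12$.

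\textbf{Step 1 (Euler--Lagrange equation and H\"older continuity).} Vary $u$ by $u_t=\frac{u+t\varphi}{|u+t\varphi|}$ for smooth $\varphi$. This preserves the degree for small $t$, since $u$ is VMO. It yields the equation
\[
(-\Delta_p)^s u = \lambda(u)\,u,\qquad \lambda(u)(x)=\int_{\S^1}\frac{|u(x)-u(y)|^p}{|x-y|^2}\,\dd y ,
\]
in the weak sense. From the H\"older regularity theory for minimizing fractional $p$-harmonic maps in the critical case, as in \cite{Sucks23}, I take $u\in C^{\alpha}(\S^1)$ for some $\alpha>0$. This already gives that the right-hand side is in $L^1$, with an improved Morrey-type decay $\int_{B_r}\lambda\aleq r^{\alpha p}$, and that $u$ is locally close to constants. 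Hence the equation becomes a fractional $p$-Laplace equation with small right-hand side in a good space.

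\textbf{Step 2 (Brasco--Lindgren difference quotients, bootstrap).} Following \cite{BL17}, test the equation with $\delta_h\brac{|\delta_h u|^{\beta-1}\delta_h u}$ (with a cutoff, or globally on $\S^1$, which is simpler), where $\delta_h u=u(\cdot+h)-u$. The fractional $p$-Laplacian's monotonicity for $p\ge 2$ then gives a gain of Besov regularity: from $\sup_h \|\delta_h u\|_{L^q}/|h|^{\tau}<\infty$ one obtains an improved exponent $\tau'$ (with larger $q$). The right-hand side term $\lambda(u)u\cdot\delta_h(\ldots)$ is estimated using $|\delta_h u|$ and the current bound on $\lambda$; note that $\lambda$ itself is controlled by the current regularity of $u$, since $\lambda\in L^{r}$ once $u\in W^{\tau,q}$ with $\tau>s$. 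In the localized setting of \cite{BL17} the gain saturates at $\sigma<\frac{sp}{p-1}=\frac1{p-1}$, which exceeds $\frac12$ only for $p<3$. Because we work globally on $\S^1$, both the ``tail'' and the right-hand side $\lambda$ improve with each step. I will iterate, tracking the exponent recursion $\sigma_{k+1}=F(\sigma_k)$, and show that its fixed point lies above $\frac12$ exactly when $p<4$. Finitely many steps then give $\sigma>\frac12$.

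\textbf{Main obstacle.} The main obstacle is Step 2: setting up the bootstrap so that the improvement of $\lambda(u)$ genuinely feeds back, and checking that the recursion reaches $\sigma>\frac12$ for all $p<4$. I would first try the choice $q=p$ throughout, with the estimate $|\lambda(u)|\le$ (fractional gradient of $u$ at order $s$)$^p$. The right-hand side is then bounded via $\|\lambda\|_{L^{r}}\aleq [u]_{W^{s+\epsilon,\,\cdot}}^p$, and I would compute the resulting linear recursion for $\sigma_k$. If the exponents do not close, I would switch to the full Besov scale $B^{\tau}_{q,\infty}$ with increasing $q$, as in \cite{BL17}, using boundedness of $u$ to trade integrability for differentiability.
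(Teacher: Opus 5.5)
\textbf{There is a genuine gap: the case $2<p<4$, the only one with content, is not actually proved.}

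Your reduction for $1<p\le 2$ is fine, and so is your final H\"older step from $W^{\sigma,p}$ ($\sigma>\frac12$, $p>2$) to $H^{1/2}$. For $2<p<4$, however, the recursion $\sigma_{k+1}=F(\sigma_k)$ is never written down. The claim that its fixed point exceeds $\frac12$ exactly when $p<4$ is read off from the statement, not derived.

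The concrete missing piece is the Lagrange multiplier term. Test $(-\Delta_p)^s u=\lambda u$ with, e.g., $\delta_{-h}(\eta^p\delta_h u)$. You must first use $u\cdot\delta_h u=-\frac12|\delta_h u|^2$, otherwise $\delta_h\lambda$ appears. The term then becomes $\frac12\int(\lambda(x)+\lambda(x+h))\eta^p|\delta_h u|^2$. Here $\lambda(x)=\int|u(x)-u(y)|^p|x-y|^{-2}\,dy$ contains all near-diagonal interactions and is of the same order as the energy.

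With what you actually have ($\lambda\in L^1$, $u\in C^\alpha$), this term is $O(|h|^{2\alpha})$. That gives $u\in\mathcal N^{\mu,p}$ only for $\mu<s+2s\alpha$. Since $\mathcal N^{\mu,p}\subset C^{\mu-s}$ and $2s\alpha<\alpha$, nothing can be iterated in the H\"older scale. To iterate in the $L^p$ scale and obtain the gain $|h|^{2\gamma}$ from $u\in\mathcal N^{\gamma,p}$, you need $\lambda\in L^{p/(p-2)}$. That is higher integrability of the $p$-th power of the Gagliardo $s$-gradient, itself a regularity statement of the kind being proved. Your line ``$\lambda\in L^r$ once $u\in W^{\tau,q}$, $\tau>s$'' does not say which $r$. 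Whether the resulting coupled recursion reaches $\frac12$ for every $p<4$, rather than stalling or stopping at the Brasco--Lindgren threshold $p<3$, is exactly the point you leave open with ``if the exponents do not close''.

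\textbf{How the paper avoids this.} It never writes an Euler--Lagrange equation, so no multiplier ever appears. Using H\"older continuity, it lifts $u=e^{i\varphi}$ on a small arc $B(x_0,R)$ and compares $u$ with two competitors:
\begin{itemize}
\item the inner variation $v_h=u\circ\psi_h$, a translation by $h$ on $B(x_0,r)$. A change of variables plus minimality gives $E_s(v_h)-E_s(u)\le C\rho^{-2}h^2$, independently of the regularity of $u$;
\item the phase midpoint $m_h=e^{i(\varphi+\varphi\circ\psi_h)/2}$, for which $E_s(u)\le E_s(m_h)$.
\end{itemize}
Hence $E_s(u)+E_s(v_h)-2E_s(m_h)\le C\rho^{-2}h^2$.

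For $x,y\in B(x_0,R)$ the integrand is $f(\varphi(x)-\varphi(y))$ with $f(t)=|e^{it}-1|^p$. Since $f''(t)\gtrsim|t|^{p-2}$ for small $t$ when $p>2$, the second difference is at least $c|\theta_h(x)-\theta_h(y)|^p$, where $\theta_h=\varphi\circ\psi_h-\varphi$, with no error term. Only pairs with $y\notin B(x_0,R)$ produce an error. By global semiconvexity it is bounded by $\frac{C}{R-\rho}\int|\theta_h|^2\lesssim|h|^{2\gamma}$, using H\"older's inequality and $p>2$.

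This yields $[\delta_h u]^p_{W^{s,p}}\lesssim|h|^2+|h|^{2\gamma}$, hence the recursion $\gamma_{j+1}<s+2s\gamma_j$. Its limit $\frac{s}{1-2s}$ exceeds $\frac12$ if and only if $s>\frac14$, which is where $p<4$ comes from.

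If you want to keep an equation-based route, the analogous fix would be to use the tangential (phase) equation from variations $ue^{it\zeta}$, which carries no multiplier. You would then test it on the lifted arc with $\delta_{-h}(\eta^p\delta_h\varphi)$.
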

For $s \geq \frac{1}{2}$ \Cref{th:minH12} follows from Sobolev embedding. For $s \in (0,\frac{1}{2})$ we first recall the following result of \cite{Sucks23}, which means that the a priori critical Euler--Lagrange equation becomes subcritical and only bootstrapping is needed.
\begin{theorem}\label{th:heolderreg}
Let $u$ be as in \Cref{th:minH12}. Then for some $\eps > 0$ we have $u \in C^\eps$.
\end{theorem}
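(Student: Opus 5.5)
This is the Hölder regularity result for minimizers of the degree-constrained problem, and it is attributed to \cite{Sucks23}. My plan is to reproduce the standard hole-filling/Morrey-decay argument for minimizing $W^{s,p}$-maps into $\S^1$, with $sp=1$, i.e.\ in the critical regime.

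The first step is to localize. Degree is a topological constraint that is stable under small perturbations in $W^{s,p}$. By the VMO property, any competitor which agrees with $u$ outside a small arc $I$ and has small energy concentration keeps the same degree. So $u$ is a local minimizer with respect to compactly supported perturbations in arcs of small energy.

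The second step is a decay estimate via comparison. Fix an arc $I=B_r(x_0)$ with small energy $\int_{2I}\int_{2I}\frac{|u(x)-u(y)|^p}{|x-y|^2}<\delta$. Build a competitor $w$ that equals $u$ outside $I$. Inside $I$, take $w$ to be an interpolation between $u$ and a constant $\bar u$ close to the mean, projected back to $\S^1$. The projection is well defined because, by the small-energy (VMO) condition, $u$ on $I$ stays near $\bar u$ away from a small set. A cutoff $\eta$ makes this precise. Comparing $E_s(u)\le E_s(w)$ and estimating the cross terms between $I$ and $\S^1\setminus I$ gives a Widman-type hole-filling inequality
\[
\Phi(r/2)\le \theta\,\Phi(r)+\text{tail}(r),\qquad \theta<1,
\]
where $\Phi(r)$ is the energy of $u$ localized to $B_r(x_0)\times B_r(x_0)$, and the tail is controlled by $\sum_k 2^{-ks}$ times the energy on dyadic annuli.

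The third step is to iterate. Iterating the inequality, with the tail absorbed in the usual way for nonlocal energies, yields $\Phi(r)\aleq r^{\alpha}$ for some $\alpha>0$ uniformly in $x_0$. Since $sp=1$ is critical, Morrey/Campanato decay of the $W^{s,p}$ energy implies $u\in C^{\eps}$ by the fractional Morrey embedding. Absolute continuity of the integral gives uniform smallness at small scales, so the argument applies at every point.

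The main obstacle is the second step. The nonlocal interaction between $I$ and its complement, and the constraint $|w|=1$, must be handled without losing the factor needed for $\theta<1$. I would first try the Schikorra-style approach of extending to a harmonic-type function and projecting, using the averaging trick over translates of the projection point to control the projection's Lipschitz constant.
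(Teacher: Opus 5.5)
\emph{Verdict: genuine gap.} The paper does not prove this statement; it quotes it from \cite{Sucks23}. Your sketch therefore has to stand on its own, and it does not. Everything hinges on Step~2, which you leave open. The mechanism you propose for it breaks down exactly where \Cref{th:heolderreg} is used: $s\in(\frac14,\frac12)$, i.e.\ $p\in(2,4)$ (for $s\ge\frac12$ the paper uses Sobolev embedding instead).

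Small energy on $2I$ only says that $u$ is close to $\bar u$ \emph{in measure}. On the exceptional set $u$ can sit near $-\bar u$, and where $\eta\approx\frac12$ the interpolant $X=\eta\bar u+(1-\eta)u$ is then near $0$. There the radial projection has Lipschitz constant $\sim|X|^{-1}$, so $E_s(w)$ is not controlled by the energy of $u$.

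The averaging over projection centres $a\in B_{1/2}(0)\subset\R^2$ needs $\int_{B_{1/2}}|X-a|^{-p}\,\dd a<\infty$, i.e.\ $p<2$, the ambient dimension of $\S^1$. For $p>2$ the averaged bound is only $\int_{B_{1/2}}|\pi_a(X)-\pi_a(Y)|^p\,\dd a\approx|X-Y|^2$ for $|X-Y|$ small (with an extra logarithm at $p=2$). So the averaged competitor energy is controlled by a truncated $\dot H^{\frac12}$-seminorm of $X$. That seminorm is not bounded by $[u]_{W^{\frac1p,p}}$ when $p>2$; this is the same failure of $W^{\frac1p,p}\subset H^{\frac12}$ the paper points to when it remarks that $D(u)$ need not be defined for $s<\frac12$. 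Hence the inequality ``energy of $w$ $\aleq$ energy of $u$ near the annulus'', on which the whole hole-filling rests, is unproven in the relevant range.

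What is missing is a competitor that stays uniformly away from $0$ by using smallness of the energy at \emph{every} scale below $r$, not just off a small set. Two possible routes:
\begin{itemize}
\item Use averages. For $B_\tau(x)\subset 2I$ one has $\operatorname{dist}(\bar u_{B_\tau(x)},\S^1)\le\brac{\iint_{B_\tau(x)\times B_\tau(x)}\frac{|u(y)-u(z)|^p}{|y-z|^2}}^{1/p}$. So one can project averages taken at scales comparable to the distance to $\S^1\setminus I$, and interpolate those.
\item Lift $u$ on the two arcs of the annulus (small energy rules out a full turn) and interpolate the phase, so that $|w|=1$ automatically.
\end{itemize}
In either version the real work is a $W^{\frac1p,p}$-bound for the new map in terms of the energy of $u$ near $I$. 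For the lift this is a linear lifting estimate, which in the critical case $sp=1$ fails without a smallness assumption. None of this is in your sketch.

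A secondary point concerns the tail. With $\Phi(r)$ the energy on $B_r\times B_r$, the cross terms produce tail contributions with $O(1)$ constants. An inequality of the form $\Phi(r/2)\le\theta\Phi(r)+C\sum_k 2^{-k}\Phi(2^kr)$ does not iterate to decay unless $C$ is small compared with $1-\theta$, so ``absorbed in the usual way'' is not enough. This is repaired by letting $\Phi(r)$ be the energy of all pairs with at least one point in $B_r$. Then the interaction of the constant part of $w$ with $\S^1\setminus B_r$ is bounded by the cross energy of $u$ between the annulus and $\S^1\setminus B_r$, which lies in $\Phi(r)-\Phi(r/2)$. One gets the tail-free inequality $\Phi(r/2)\le\frac{C}{C+1}\Phi(r)$.
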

Observe that there is no estimate of the $C^\eps$-norm of $u$, since the energy is conformally invariant the $C^\eps$-norm can be arbitrarily large.
\begin{definition}
\label{def:nikolskii-seminorm}
For $0<\gamma<1$, $1\le p<\infty$ and for a $2\pi$-periodic function $f$, we define
\[
        [f]_{\mathcal N^{\gamma,p}}
        \coloneqq
        \sup_{0<|h|<1}
        \frac{\|f(\cdot+h)-f(\cdot)\|_{L^p(0,2\pi)}}{|h|^\gamma}.
\]
\end{definition}
The following is proven in \cite[Proposition 2.6]{BL17}.
\begin{lemma}[Sobolev implies Nikol'skii]
\label{lem:Sobolev-implies-Nikolskii}
Let $0<\gamma<1$ and $1\le p<\infty$.
If $f\in W^{\gamma,p}(\S^1)$, then
\[
        [f]_{\mathcal N^{\gamma,p}}
        \le
        C [f]_{W^{\gamma,p}(\S^1)}.
\]
\end{lemma}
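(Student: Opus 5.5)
The plan is to reduce the Sobolev seminorm to an averaged $L^p$-modulus of continuity and then take the supremum over $h$. I work on $(0,2\pi)$ with $f$ extended $2\pi$-periodically, and use the equivalent seminorm
\[
[f]_{W^{\gamma,p}}^p \approx \int_{-\pi}^{\pi} \frac{\|f(\cdot+\tau)-f(\cdot)\|_{L^p(0,2\pi)}^p}{|\tau|^{1+\gamma p}}\,\dd\tau .
\]
This equivalence holds because $|e^{\i x}-e^{\i y}|\approx |x-y|$ for the periodic distance $|x-y|\le\pi$. Set $\omega(\tau)\coloneqq \|f(\cdot+\tau)-f(\cdot)\|_{L^p(0,2\pi)}$. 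It suffices to show $\omega(h)^p\aleq |h|^{\gamma p}[f]^p_{W^{\gamma,p}}$ for every $0<|h|<1$.

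Fix such an $h$ and let $\tau$ range over the set $I_h=\{\tau: |h|/2\le|\tau|\le|h|\}$, or over the analogous set with the appropriate sign so that $h-\tau$ stays comparable to $h$. For every $\tau\in I_h$, translation invariance of the $L^p$ norm on the circle and the triangle inequality give
\[
\omega(h)\le \omega(\tau)+\omega(h-\tau).
\]
Raise this to the power $p$ and average over $\tau\in I_h$, a set of measure $\approx|h|$. Since $|\tau|\approx|h|$ and $|h-\tau|\aleq|h|$, this yields
\[
\omega(h)^p \aleq \frac{1}{|h|}\int_{|\sigma|\le 2|h|}\omega(\sigma)^p\,\dd\sigma .
\]
Here both $\omega(\tau)$ and $\omega(h-\tau)$ are controlled after the change of variables $\sigma=h-\tau$.

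The main step is to make the right-hand side carry the weight. On the region $|\sigma|\le 2|h|$ we have $|\sigma|^{-(1+\gamma p)}\ge (2|h|)^{-(1+\gamma p)}$, so
\[
\frac{1}{|h|}\int_{|\sigma|\le2|h|}\omega(\sigma)^p\,\dd\sigma \aleq |h|^{\gamma p}\int_{|\sigma|\le 2|h|}\frac{\omega(\sigma)^p}{|\sigma|^{1+\gamma p}}\,\dd\sigma \aleq |h|^{\gamma p}[f]^p_{W^{\gamma,p}}.
\]
Since $|h|<1$, we have $2|h|<2<\pi$, so the range stays inside the periodic window where the seminorm equivalence holds.

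Taking $p$-th roots and the supremum over $h$ gives the claim. I do not expect any genuine obstacle. The only care needed is the bookkeeping of the constant in the equivalence between chordal and periodic distance, and the choice of averaging set so that $h-\tau$ does not degenerate, which is handled by taking $\tau$ on the same side as $h$.
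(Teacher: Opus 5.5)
Your argument is correct, and it takes a different route only in the sense that the paper gives no proof of its own. It simply quotes \cite[Proposition 2.6]{BL17}, which is formulated for functions on $\R^N$, and leaves the passage to the periodic setting implicit.

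You instead give the standard self-contained averaging proof directly on the circle. It has three steps: the translation triangle inequality $\omega(h)\le\omega(\tau)+\omega(h-\tau)$, an average over $|\tau|\aleq|h|$, and the fact that the kernel $|\sigma|^{-(1+\gamma p)}$ is bounded below by $(2|h|)^{-(1+\gamma p)}$ on $\{|\sigma|\le 2|h|\}$. The lemma only asks for some constant $C=C(\gamma,p)$, so nothing sharper is needed.

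The citation buys brevity. Your route buys a complete proof on $\S^1$ with no localization or extension step, and it makes the two circle-specific points explicit:
\begin{itemize}
\item The restriction $0<|h|<1$ keeps $\{|\sigma|\le2|h|\}$ inside $(-\pi,\pi)$.
\item Since $2|\sin(\tau/2)|\le|\tau|$, the only direction of your seminorm equivalence that is actually used, $\int_{-\pi}^{\pi}\omega(\tau)^p|\tau|^{-1-\gamma p}\,\dd\tau\le[f]^p_{W^{\gamma,p}(\S^1)}$, even holds with constant one.
\end{itemize}

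One remark in your write-up is backwards, though harmless. Take $\tau$ with the same sign as $h$ and $|h|/2\le|\tau|\le|h|$. Then $|h-\tau|\le|h|/2$, which vanishes at $\tau=h$, so $h-\tau$ does degenerate. The opposite sign would keep $|h-\tau|\in[\tfrac32|h|,2|h|]$. No non-degeneracy is needed, however. You only use $|h-\tau|\le2|h|$, and small $|\sigma|$ only makes the weight $|\sigma|^{-(1+\gamma p)}$ larger. You could therefore simply average over all $|\tau|\le|h|$.
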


The following is just a (rough) Poincar\'e type inequality and can be easily proven by hand using the Gagliardo seminorm.
\begin{lemma}[Nikol'skii regularity above $1/2$ implies $H^{1/2}$]
\label{lem:Nikolskii-above-half-implies-Hhalf}
Let $p>2$ and $\gamma>\frac12$. If $f\in L^p(\S^1)$ with $[f]_{\mathcal N^{\gamma,p}}<\infty$,
then $f\in H^{1/2}(\S^1)$.
\end{lemma}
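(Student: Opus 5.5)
The plan is to bound the Gagliardo $H^{1/2}$ seminorm directly by the Nikol'skii seminorm, splitting the double integral according to the size of the increment. I would parametrize $\S^1$ by $[0,2\pi)$, so that $|x-y|$ is comparable to the periodic distance $|h|$ with $h=y-x\in(-\pi,\pi]$. By Fubini,
\[
[f]_{H^{1/2}}^2 \approx \int_{-\pi}^{\pi} \frac{\|f(\cdot+h)-f(\cdot)\|_{L^2(0,2\pi)}^2}{|h|^2}\,\dd h .
\]
The whole point is then to estimate the inner $L^2$ norm by the $L^p$ norm and integrate in $h$.

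For the small increments $|h|<1$, Hölder's inequality on the bounded interval $(0,2\pi)$, using $p>2$, gives
\[
\|f(\cdot+h)-f\|_{L^2}\le (2\pi)^{\frac12-\frac1p}\|f(\cdot+h)-f\|_{L^p}\le C\,[f]_{\mathcal N^{\gamma,p}}|h|^\gamma .
\]
The contribution of this region is therefore bounded by
\[
C[f]_{\mathcal N^{\gamma,p}}^2\int_{|h|<1}|h|^{2\gamma-2}\,\dd h,
\]
which is finite exactly because $2\gamma-2>-1$, that is, $\gamma>\frac12$.

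For the large increments $1\le|h|\le\pi$, the weight $|h|^{-2}$ is bounded. Using Hölder again, $\|f(\cdot+h)-f\|_{L^2}\le 2\|f\|_{L^2}\aleq\|f\|_{L^p}$, so this part is controlled by $C\|f\|_{L^p}^2<\infty$. Combining the two regions gives $[f]_{H^{1/2}}\aleq [f]_{\mathcal N^{\gamma,p}}+\|f\|_{L^p}$. Since $f\in L^p\subset L^2$ on the circle, this shows $f\in H^{1/2}(\S^1)$.

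There is no real obstacle here. The only points needing care are the comparability of chordal and arc distance (for $x,y\in\S^1$ one has $|x-y|\approx|h|$ with $|h|\le\pi$), and the fact that the Nikol'skii supremum only covers $|h|<1$, which is exactly why the large increments are handled separately by the $L^p$ norm.
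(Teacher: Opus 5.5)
Your proof is correct and is the by-hand Gagliardo-seminorm argument the paper has in mind; the paper writes out no proof, only the remark that the lemma is easily proven by hand this way. The steps are: Fubini in the increment $h$; H\"older from $L^p$ to $L^2$ using $p>2$; the Nikol'skii bound, which gives the weight $|h|^{2\gamma-2}$, integrable on $|h|<1$ exactly when $\gamma>\frac12$; and the $L^p$ norm for $1\le|h|\le\pi$.
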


The following is proven in \cite{MMS26}
\begin{lemma}[Lifting]
\label{lem:uniform-small-arc-lifting}
Let $\alpha>0$ and $\lambda>0$.
There exist constants $r_{{lift}}>0$ and $C_{{lift}}>0$, depending only on $\alpha$ and $\lambda$ such that the following holds.
Let $u\in C^\alpha(\S^1,\S^1)$ satisfying
\[
        [u]_{C^\alpha(\S^1)}\le \lambda.
\]
If $I\subset\S^1$ is an arc of length $0<|I|\le r_{{lift}}$, then there exists a unique lift $\varphi\colon I \to [-\frac{\pi}{4},\frac{\pi}{4}]$ such that
\[
	\begin{cases}
        u=e^{\i\varphi} \qquad \text{on }I, \\[2mm]
        [\varphi]_{C^\alpha(I)} \le C_{{lift}}(1+\lambda).
    \end{cases}
\]
\end{lemma}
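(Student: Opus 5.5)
The lifting statement is local and quantitative, so the plan is to build $\varphi$ by hand from a single reference point and use the H\"older bound to keep $u$ inside a quarter-circle arc over $I$. Fix an arc $I$ with $|I|\le r_{lift}$ and a point $x_0\in I$. The given range $[-\frac{\pi}{4},\frac{\pi}{4}]$ should be read up to a global rotation: I would normalize by replacing $u$ with $\overline{u(x_0)}\,u$. This multiplies by a unimodular constant, does not change $[u]_{C^\alpha}$, and lets me assume $u(x_0)=1$. (Without such a normalization the statement would fail for, e.g., constant maps $u\equiv -1$, so the rotation is clearly intended.)

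\emph{Choice of $r_{lift}$.} For every $x\in I$ we have
\[
|u(x)-1| = |u(x)-u(x_0)| \le \lambda |x-x_0|^\alpha \le \lambda r_{lift}^\alpha .
\]
I choose $r_{lift}$ so that $\lambda r_{lift}^\alpha \le 2\sin(\pi/8)$. Since $|e^{\i\theta}-1| = 2|\sin(\theta/2)|$, this forces $u(I)$ to lie in the closed arc $\{e^{\i\theta}: |\theta|\le \frac{\pi}{4}\}$. On that arc the principal argument $\arg$ is a smooth inverse of $\theta\mapsto e^{\i\theta}$, so I define $\varphi \coloneqq \arg\circ u$, which maps $I$ into $[-\frac{\pi}{4},\frac{\pi}{4}]$.

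\emph{Uniqueness.} Suppose $\psi$ is another lift with values in $[-\frac{\pi}{4},\frac{\pi}{4}]$. Then $e^{\i(\varphi-\psi)}=1$ forces $\varphi-\psi\in 2\pi\Z$ pointwise. But $|\varphi-\psi|\le \frac{\pi}{2}<2\pi$, so $\varphi=\psi$. This needs no continuity of $\psi$.

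\emph{H\"older bound.} For $\theta_1,\theta_2\in[-\frac{\pi}{4},\frac{\pi}{4}]$ the chord and arc lengths are comparable:
\[
|e^{\i\theta_1}-e^{\i\theta_2}| = 2\bigl|\sin\tfrac{\theta_1-\theta_2}{2}\bigr| \ge \tfrac{2}{\pi}|\theta_1-\theta_2|,
\]
using $|\theta_1-\theta_2|\le\frac{\pi}{2}$ and $\sin t\ge \frac{2}{\pi}t$ on $[0,\frac{\pi}{2}]$. Applying this to $\theta_i=\varphi(x_i)$ gives
\[
|\varphi(x_1)-\varphi(x_2)| \le \tfrac{\pi}{2}\,|u(x_1)-u(x_2)| \le \tfrac{\pi}{2}\lambda |x_1-x_2|^\alpha .
\]
So $[\varphi]_{C^\alpha(I)}\le C_{lift}(1+\lambda)$ with $C_{lift}=\frac{\pi}{2}$, and $r_{lift}$ depends only on $\alpha$ and $\lambda$, as required.

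The main obstacle is not analytic. It is only the interpretation of the fixed target interval $[-\frac{\pi}{4},\frac{\pi}{4}]$, which has to be understood modulo a rotation depending on $I$. Once that normalization is made, everything else is the elementary comparison between the chord $|e^{\i\theta_1}-e^{\i\theta_2}|$ and the arc $|\theta_1-\theta_2|$.
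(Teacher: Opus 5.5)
Your proof is correct. The paper gives no argument of its own for this lemma; it quotes it from \cite{MMS26}, so there is no in-paper route to compare against. Your proof is complete and self-contained, and it rests on three steps:
\begin{itemize}
\item normalize so that $u(x_0)=1$;
\item choose $r_{{lift}}$ with $\lambda r_{{lift}}^\alpha\le 2\sin(\pi/8)$, so that $u(I)$ stays in the arc $\{e^{\i\theta}:|\theta|\le\frac{\pi}{4}\}$;
\item apply the chord--arc inequality $|e^{\i\theta_1}-e^{\i\theta_2}|\ge\frac{2}{\pi}|\theta_1-\theta_2|$ for $|\theta_1-\theta_2|\le\frac{\pi}{2}$.
\end{itemize}
This gives $r_{{lift}}=(2\sin(\pi/8)/\lambda)^{1/\alpha}$, and even an absolute constant $C_{{lift}}=\frac{\pi}{2}$.

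Your remark about the target interval is also right. The range $[-\frac{\pi}{4},\frac{\pi}{4}]$ can only be achieved after a rotation of the target, as $u\equiv-1$ shows. Uniqueness is then to be read for lifts into that fixed interval, and your pointwise $2\pi\Z$ argument settles it without any continuity assumption.

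This reinterpretation costs nothing in the paper's only application, \Cref{prop:localized-superquadratic-difference-quotient}. That argument uses only the existence of some lift $u=e^{\i\varphi}$ on $B(x_0,R)$ satisfying the H\"older bound. You obtain such a lift from yours by adding the constant $\arg u(x_0)$, which leaves the seminorm unchanged.
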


The following is the counterpart to the underlying argument in the regularity theory for $p\leq 2$ used in \cite[Lemma 6.4]{MMS26}
\begin{lemma}[convexity and semiconvexity for $p>2$]
\label{lem:superquadratic-chordal-convexity}
Let $p>2$. There exist constants $\eta_{{cvx}}>0$, $c_{{cvx}}>0$ and $C_{{sc}}>0$ depending only on $p$, such that the following hold. If $\xi,\zeta\in [-\eta_{\rm cvx}, \eta_{\rm cvx}]$, then it holds
\begin{equation}\label{eq:convexity1st}
\begin{aligned}
        &|e^{\i\xi}-1|^p
        +
        |e^{\i\zeta}-1|^p
        -
        2\left|e^{\i\frac{\xi+\zeta}{2}}-1\right|^p \ge
        c_{{cvx}}|\xi-\zeta|^p.
\end{aligned}
\end{equation}
Moreover, for every $\omega\in\S^1$ and every $\xi,\zeta\in\mathbb R$, we have
\begin{equation}\label{eq:convexityglobal}
\begin{aligned}
        &|e^{\i\xi}-\omega|^p
        +
        |e^{\i\zeta}-\omega|^p
        -
        2\left|e^{\i\frac{\xi+\zeta}{2}}-\omega\right|^p
        \ge
        -C_{{sc}}|\xi-\zeta|^2.
\end{aligned}
\end{equation}
\end{lemma}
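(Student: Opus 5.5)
We need to prove Lemma (convexity and semiconvexity for $p>2$). Set $f(t)\coloneqq |e^{\i t}-1|^p = 2^p|\sin(t/2)|^p$. The plan is to treat the two inequalities separately: the first is a local convexity estimate near $0$, the second a global semiconvexity estimate coming from the $C^{1,1}$-type regularity of $|\cdot|^p$ restricted to the circle.

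\textbf{First inequality \eqref{eq:convexity1st}.} For $|t|\le 2\eta_{\rm cvx}$ small, $f(t)=|t|^p g(t)$ with $g(t)=\bigl(\tfrac{2\sin(t/2)}{t}\bigr)^p$ smooth, even, $g(0)=1$ and $g(t)=1-\tfrac{p}{24}t^2+O(t^4)$. I will reduce to the model function $|t|^p$. For $h(t)=|t|^p$ with $p\ge 2$, the inequality
\[
h(\xi)+h(\zeta)-2h\Bigl(\tfrac{\xi+\zeta}{2}\Bigr)\ge c_p|\xi-\zeta|^p
\]
holds, with $c_p=2^{1-p}$ by Clarkson's inequality, or alternatively by scaling/compactness: normalize $|\xi-\zeta|=2$, write $\xi=m+1$, $\zeta=m-1$, note the left side is positive for every $m$ by strict convexity and grows like $p(p-1)|m|^{p-2}$ as $|m|\to\infty$, hence is bounded below by a positive constant. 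The perturbation from $g$ is handled via second derivatives: the left side equals $\tfrac{(\xi-\zeta)^2}{4}\int_0^1 (1-\tau)\bigl[f''(m+\tau\delta)+f''(m-\tau\delta)\bigr]\,\dd\tau$ with $\delta=\tfrac{\xi-\zeta}{2}$. Since $f''(t)=p(p-1)|t|^{p-2}(1+O(t^2))$ for small $t$ (the corrections $|t|^{p}g''$, $|t|^{p-1}g'$ are $O(|t|^p)$), choosing $\eta_{\rm cvx}$ small gives $f''\ge \tfrac12 h''$ on $[-\eta_{\rm cvx},\eta_{\rm cvx}]$. Hence the left side is at least half the corresponding quantity for $h$, which is at least $\tfrac12 c_p|\xi-\zeta|^p$. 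The main (mild) point is this comparison of second derivatives; it needs $p>2$ only to ensure $f\in C^2$ away from issues at $0$, and $f''$ is integrable even at $0$ since $p-2>0$.

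\textbf{Second inequality \eqref{eq:convexityglobal}.} Write $\omega=e^{\i\phi}$, so the left side is $F(\xi)+F(\zeta)-2F(\tfrac{\xi+\zeta}{2})$ with $F(t)=f(t-\phi)$. The function $f$ is $C^2$ on $\R$ with $\sup|f''|\le C_p$: indeed $f(t)=2^p|\sin(t/2)|^p$ and for $p>2$ the map $x\mapsto|x|^p$ is $C^2$ with bounded second derivative on $[-1,1]$, composed with the smooth function $2\sin(t/2)$. Then Taylor's formula as above gives the lower bound $-\tfrac{(\xi-\zeta)^2}{4}\sup|f''|$, i.e.\ $C_{sc}=\tfrac14\sup|f''|$, independent of $\omega$. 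No step here seems hard; the only care needed is the uniformity of the expansion in the first part.
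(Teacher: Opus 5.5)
Your argument is correct and follows essentially the same route as the paper. Both proofs feed a lower bound $f''(t)\ge c|t|^{p-2}$ near $0$ into the integral form of the second difference, and both use a bound on $f''$ for the global semiconvexity (you use $\sup|f''|<\infty$, the paper only $f''\ge -p2^{p-2}$); the one cosmetic difference is that you close the local estimate by comparing with the model $|t|^p$ and invoking Clarkson's inequality, with explicit constant $2^{1-p}$, whereas the paper proves $\int_{-1}^{1}(1-|\sigma|)|a+\sigma|^{p-2}\dd\sigma\ge c$ directly by compactness.
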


\begin{proof}
We define
\[
        f(t)\coloneqq |e^{\i t}-1|^p
        =
        \left(2\left|\sin\frac t2\right|\right)^p.
\]
For $0<|t|<\pi$, direct differentiation gives
\[
        f''(t)
        =
        p\left(2\left|\sin\frac t2\right|\right)^{p-2}
        \left[
        (p-1)\cos^2\frac t2
        -
        \sin^2\frac t2
        \right].
\]
Choose $\eta_{{cvx}}\in(0,\frac12)$ so small that
\[
        \sin^2\frac{\eta_{{cvx}}}{2}
        \le
        \frac{p-1}{2p}.
\]
Then for $0<|t|\le\eta_{{cvx}}$, it holds
\[
\begin{aligned}
        (p-1)\cos^2\frac t2-\sin^2\frac t2
        &=
        (p-1)-p\sin^2\frac t2 \ge
        \frac{p-1}{2}.
\end{aligned}
\]
Also, for $0<|t|\le\eta_{{cvx}}$, we have
\[
        2\left|\sin\frac t2\right|\ge c|t|.
\]
Since $p-2>0$, we get
\[
        f''(t)\ge c|t|^{p-2}
        \qquad
        \text{for }0<|t|\le \eta_{{cvx}}.
\]
Now let us prove the estimates. If $\xi = \zeta$ there is nothing to prove, so from now on assume $\xi \neq \zeta$, i.e. $\abs{\frac{\xi-\zeta}{2}}>0$.

We begin with \underline{\eqref{eq:convexity1st}:}
Since $p>2$, the function $f$ is $C^2$ near $0$, with $f''(0)=0$. Therefore
\[
\begin{aligned}
        &f(\xi)+f(\zeta)-2f\left(\frac{\xi+\zeta}{2}\right)
        =
        \int_{-1}^{1}
        (1-|\sigma|)
        f''\left(
        \frac{\xi+\zeta}{2}
        +
        \sigma\frac{\xi-\zeta}{2}
        \right)
        \left(\frac{\xi-\zeta}{2}\right)^2
        \dd\sigma.
\end{aligned}
\]
The arguments of $f''$ lie in $[-\eta_{{cvx}},\eta_{{cvx}}]$. Thus
\[
\begin{aligned}
        &f(\xi)+f(\zeta)-2f\left(\frac{\xi+\zeta}{2}\right) \ge
        c\abs{\frac{\xi-\zeta}{2}}^2
        \int_{-1}^{1}
        (1-|\sigma|)
        \left|
        \frac{\xi+\zeta}{2}
        +
        \sigma\frac{\xi-\zeta}{2}
        \right|^{p-2}
        \dd\sigma.
\end{aligned}
\]
We claim that for all $A\in\mathbb R$, if $\abs{\frac{\xi-\zeta}{2}}>0$ then
\[
        \int_{-1}^{1}
        (1-|\sigma|)
        \left|
        A+\sigma \abs{\frac{\xi-\zeta}{2}}
        \right|^{p-2}
        \dd\sigma
        \ge
        c\abs{\frac{\xi-\zeta}{2}}^{p-2}.
\]
 Dividing by $\abs{\frac{\xi-\zeta}{2}}$, it is enough to prove
\[
        \int_{-1}^{1}(1-|\sigma|)|a+\sigma|^{p-2}\dd\sigma\ge c.
\]
If $|a|\ge2$, then $|a+\sigma|\ge1$ for all $\sigma\in[-1,1]$, and the integral is at least
\[
        \int_{-1}^{1}(1-|\sigma|)\dd\sigma=1.
\]
If $|a|\le2$, the following function is continuous and strictly positive on the compact interval $[-2,2]$
\[
        a\mapsto
        \int_{-1}^{1}(1-|\sigma|)|a+\sigma|^{p-2}\dd\sigma.
\]
Hence it has a positive minimum. This proves the claim. Therefore
\[
        f(\xi)+f(\zeta)-2f\left(\frac{\xi+\zeta}{2}\right)
        \ge
        c\abs{\frac{\xi-\zeta}{2}}^p
        =
        c|\xi-\zeta|^p.
\]
This proves the local convexity estimate \eqref{eq:convexity1st}

For \underline{\eqref{eq:convexityglobal}}, we use the same formula for $f''$. Since
\[
        (p-1)\cos^2\frac t2-\sin^2\frac t2\ge -1
\]
and
\[
        \left(2\left|\sin\frac t2\right|\right)^{p-2}\le 2^{p-2},
\]
we obtain for $0<|t|<\pi$,
\[
        f''(t)\ge -p2^{p-2}.
\]
Since $f$ is $C^2$ at the points of $2\pi\mathbb Z$ for $p>2$, this lower bound holds in the distributional sense on every interval. Hence the function $t\mapsto f(t)+C_{{sc}}t^2$ is convex on every interval of length $2\pi$, for $C_{{sc}}$ large enough depending only on $p$. Therefore, it holds
\[
        f(\xi)+f(\zeta)-2f\left(\frac{\xi+\zeta}{2}\right)
        \ge
        -C_{{sc}}|\xi-\zeta|^2.
\]
Rotating the target gives the same estimate with $1$ replaced by any $\omega\in\S^1$. This prove \eqref{eq:convexityglobal}.
\end{proof}

The following was proven in \cite[Lemma 6.3]{MMS26}
\begin{lemma}[Inner variations]
\label{lem:uniform-inner-variations}
There exist constants $r_g>0$, $c_g>0$ and $C_g>0$ such that the following holds.

Let $I\subset\S^1$ be an arc of length $0<r\le r_g$
Let $X\in C^\infty(\S^1,\mathbb R)$ such that $\operatorname{spt}X\Subset I$ and, in arclength coordinates,
\begin{equation}\label{eq:Xestimates}
	|X|\le C_g,
	\qquad
	\left|\frac{dX}{d\ell}\right|\le \frac{C_g}{r},
	\qquad
	\left|\frac{d^2X}{d\ell^2}\right|\le \frac{C_g}{r^2} \qquad
	\left|\frac{d^3X}{d\ell^3}\right|\le \frac{C_g}{r^3}.
\end{equation}
For $|h|\le c_g r$, the map $\psi_h\colon \S^1\to \S^1$ defined by $\psi_h(e^{\i t})\coloneqq e^{\i(t+hX(e^{it}))}$ is an orientation-preserving diffeomorphism. Consequently, for every continuous map
$u\colon \S^1\to\S^1$ one has $\deg(u\circ\psi_h)=\deg u$.

Furthermore, let $p \in (1,\infty)$ and $s = \frac{1}{p} \in (0,1)$.
If $u\in W^{s,p}(\S^1,\S^1)$ is a minimizer of $E_{s}$ among maps $v\colon \S^1 \to \S^1$ with the same degree as $u$, then
\begin{equation}\label{eq:energyestimateucircpsih}
	0\le
	E_s(u\circ\psi_h)-E_s(u)
	\le
	C_g\, r^{-2}E_s(u)h^2.
\end{equation}
\end{lemma}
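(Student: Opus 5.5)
The plan is to treat $g(h)\coloneqq E_s(u\circ\psi_h)$ as a $C^2$ function of $h$ near $0$. Minimality then forces $g'(0)=0$, and a Taylor expansion with a uniform bound on $g''$ gives \eqref{eq:energyestimateucircpsih}. The lower bound $0\le g(h)-g(0)$ is immediate from minimality, once we know that $u\circ\psi_h$ is an admissible competitor of the same degree.

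\emph{Step 1 (diffeomorphism and degree).} In the coordinate $t$, $\psi_h$ lifts to $\tilde\psi_h(t)=t+hX(e^{\i t})$, which satisfies $\tilde\psi_h(t+2\pi)=\tilde\psi_h(t)+2\pi$. We have $\tilde\psi_h'(t)=1+h\,\frac{dX}{d\ell}\ge 1-c_gC_g\ge\frac12$ as soon as $c_g\le (2C_g)^{-1}$. Hence $\tilde\psi_h$ is a strictly increasing smooth bijection of $\R$ commuting with $2\pi$-translations, so $\psi_h$ is an orientation-preserving diffeomorphism. The family $\tau\mapsto\psi_{\tau h}$, $\tau\in[0,1]$, is a homotopy of such diffeomorphisms to the identity. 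Therefore $\deg(u\circ\psi_h)=\deg u$ for continuous $u$, and by density/continuity of the degree also for $u\in W^{s,p}$. Since $W^{s,p}$ is invariant under composition with diffeomorphisms, $u\circ\psi_h$ is admissible, and so $g(h)\ge g(0)$.

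\emph{Step 2 (moving the variation into the kernel).} Let $\phi_h=\psi_h^{-1}$, with lift $\tilde\phi_h(t)=t-hY_h(t)$. By the inverse function theorem, $Y_h$ depends smoothly on $h$ and satisfies the same bounds \eqref{eq:Xestimates} up to constants: $|\partial_t^kY_h|\aleq r^{-k}$, and the same holds for the $h$-derivatives. Changing variables $x\mapsto\phi_h(x)$, $y\mapsto\phi_h(y)$ gives
\[
g(h)=\int_{\S^1}\int_{\S^1}\frac{|u(x)-u(y)|^p}{|x-y|^2}\,R_h(x,y)\dd x\dd y,
\qquad
R_h(x,y)=\frac{\sin^2\frac{x-y}{2}}{\sin^2\frac{\tilde\phi_h(x)-\tilde\phi_h(y)}{2}}\,\tilde\phi_h'(x)\tilde\phi_h'(y).
\]
Here we used that the chordal distance is $2|\sin\frac{x-y}{2}|$ and that $R_0\equiv1$.

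\emph{Step 3 (uniform bounds on $\partial_hR_h$, $\partial_h^2R_h$; the main obstacle).} The key claim is
\[
\sup_{x,y}|\partial_hR_h|\aleq r^{-1},\qquad \sup_{x,y}|\partial_h^2R_h|\aleq r^{-2}\qquad\text{for }|h|\le c_gr .
\]
The Jacobian factors are harmless, since $\partial_h^k\tilde\phi_h'=O(r^{-1})$ for $k=1,2$. For the sine quotient, write $\Delta=h\,(Y_h(x)-Y_h(y))$ and use
\[
\frac{\sin\left(\frac{x-y}{2}-\frac{\Delta}{2}\right)}{\sin\frac{x-y}{2}}=\cos\tfrac{\Delta}{2}-\cot\left(\tfrac{x-y}{2}\right)\sin\tfrac{\Delta}{2}.
\]
There are two regimes. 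When $|x-y|\lesssim r$, the mean value theorem gives $|Y_h(x)-Y_h(y)|\aleq r^{-1}|x-y|$, which cancels the singularity of $\cot$. When $|x-y|\gtrsim r$, we have $|\cot\frac{x-y}{2}|\aleq r^{-1}$ (and $\cot$ is bounded near antipodal points), while $|Y_h(x)-Y_h(y)|\le 2C$. Consequently each $h$-derivative of this quotient costs at most a factor $r^{-1}$, and the quotient stays in $[\frac12,2]$, so its inverse square has the same derivative bounds. This case analysis, kept uniform in $h$, is where the care is needed.

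\emph{Step 4 (conclusion).} Because these bounds are uniform and $E_s(u)<\infty$, dominated convergence shows that $g\in C^2$ on $|h|\le c_gr$, with $|g''(h)|\aleq r^{-2}E_s(u)$. Since $g$ has a minimum at the interior point $h=0$, we get $g'(0)=0$. Taylor's theorem then gives $0\le g(h)-g(0)\le\frac12\sup|g''|\,h^2\le C_g r^{-2}E_s(u)h^2$, after enlarging $C_g$ and shrinking $r_g,c_g$ if necessary.
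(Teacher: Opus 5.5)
The paper does not prove this lemma; it quotes it from \cite[Lemma 6.3]{MMS26}, so there is no in-paper argument to compare with. Your proof is correct and self-contained. It is the standard domain-variation argument: push $\psi_h$ onto the kernel via $x\mapsto\psi_h^{-1}(x)$, bound $\partial_hR_h$ and $\partial_h^2R_h$ pointwise off the diagonal, differentiate under the integral against the $L^1$ density $|u(x)-u(y)|^p|x-y|^{-2}$, then use $g'(0)=0$ from minimality and Taylor's formula. Its key merit is that it never differentiates $u$, which is essential since $u$ is only in $W^{s,p}$.

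Some bookkeeping in Step 3 should be fixed, though none of it changes the conclusion.

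First, the $h$-derivatives of $Y_h=X\circ\tilde\phi_h$ do not obey the same bounds as $Y_h$. Already $\partial_hY_h=X'(\tilde\phi_h)\,\partial_h\tilde\phi_h=O(r^{-1})$. The correct scaling is $|\partial_h^j\partial_t^kY_h|\aleq r^{-j-k}$ for $j\le 2$, $k\le 1$, and each explicit factor $h$ gives back a factor $r$. Let $\delta$ denote the distance of $x,y$ on $\S^1$. This scaling yields $|\Delta|\aleq c_g\min(\delta,r)$, $|\partial_h\Delta|\aleq\min(\delta/r,1)$ and $|\partial_h^2\Delta|\aleq\min(\delta/r^2,1/r)$. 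The worst term, $\cot(\frac{x-y}{2})\,\partial_h^2\Delta$, is then exactly of size $r^{-2}$. The Lipschitz bound on $\partial_h^2Y_h$ used there is where the hypothesis on $d^3X/d\ell^3$ enters.

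Second, $\partial_h^2\tilde\phi_h'$ is $O(r^{-2})$, not $O(r^{-1})$; at $h=0$ it equals $2(X'^2+XX'')$. This is harmless, since $r^{-2}$ is the target bound.

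Third, the lemma as stated uses the same $C_g$ both to bound $X$ and in the conclusion, so your final step of enlarging $C_g$ is circular. What your argument actually shows is this: if $X$ obeys the bounds with some constant $K$, then \eqref{eq:energyestimateucircpsih} holds for $|h|\le c(K)\,r$ with a constant $C(K)$. By rescaling $hX=(Kh)(X/K)$ one may take $c(K)\sim K^{-1}$ and $C(K)\sim K^2$. This is also the form in which the lemma is used in Proposition~\ref{prop:localized-superquadratic-difference-quotient}, where $X=1$ on a ball.
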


The following is the basis for the improved regularity
\begin{proposition}
\label{prop:localized-superquadratic-difference-quotient}
Let $2<p<\infty$, identify $s=\frac1p \in (0,\frac{1}{2})$ and fix
$\alpha,\lambda>0$. There exist constants $R_{{outer}}>0$, $\varepsilon_{{sep}}>0$, $C>0$,
depending only on $p,\alpha,\lambda$, such that the following holds.
Let
$u\in C^\alpha(\S^1,\S^1)\cap W^{s,p}(\S^1,\S^1)
$
satisfying
\[
        [u]_{C^\alpha(\S^1)}
        +
        [u]_{W^{s,p}(\S^1)}
        +
        E_s(u)
        \le
        \lambda.
\]
Assume that $u$ minimizes $E_s$ among maps $\S^1\to\S^1$ of the same degree.
Fix $x_0\in\S^1$. Let
\[
        0<r<\rho<R<R_{{outer}},
        \qquad
        r\le \frac{\rho}{4},
        \qquad
        \frac{\rho}{R-\rho}\le \varepsilon_{{sep}}.
\]
Choose a lift from \Cref{lem:uniform-small-arc-lifting}, $u=e^{\i\varphi}$ on $B(x_0,R)$.
Let $X\in C^\infty(\S^1,\mathbb R)$ satisfying $X=1$ on $B(x_0,r)$, $\operatorname{spt}X\Subset B(x_0,\rho)$ and, in arclength coordinates,
\[
        |X|\le C,
        \qquad
        \left|\frac{dX}{d\ell}\right|\le \frac C\rho,
        \qquad
        \left|\frac{d^2X}{d\ell^2}\right|\le \frac C{\rho^2},
        \qquad
        \left|\frac{d^3X}{d\ell^3}\right|\le \frac C{\rho^3}.
\]
For $|h|\le c\rho$ we define
\[
        \psi_h(e^{\i t})\coloneqq e^{\i(t+hX(e^{\i t}))}.
\]
We define
\[
        \theta_h(x)\coloneqq \varphi(\psi_h(x))-\varphi(x)
        \qquad\text{for }x\in B(x_0,R).
\]
Then it holds
\begin{equation}
\label{eq:localized-superquadratic-theta-estimate}
\begin{aligned}
        &\int_{B(x_0,R)}\int_{B(x_0,R)}
        \frac{|\theta_h(x)-\theta_h(y)|^p}{|x-y|^2}
        \dd\ell(x)\dd\ell(y) \le
        C\rho^{-2}h^2
        +
        \frac{C}{R-\rho}
        \int_{B(x_0,\rho)}|\theta_h(x)|^2\dd\ell(x).
\end{aligned}
\end{equation}
\end{proposition}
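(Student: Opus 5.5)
We compare $u$ with $u\circ\psi_h$ and $u\circ\psi_{-h}$, and also with the ``midpoint'' competitor built from the lifts, and play off the convexity of \Cref{lem:superquadratic-chordal-convexity} against the upper bound \eqref{eq:energyestimateucircpsih} from \Cref{lem:uniform-inner-variations}. On $B(x_0,R)$ we write $u\circ\psi_{\pm h}=e^{\i(\varphi+\theta_{\pm h})}$; outside $B(x_0,\rho)$ we have $\theta_{\pm h}=0$. By \Cref{lem:uniform-small-arc-lifting} and $|h|\le c\rho$, all lifts take values in a small interval, so differences $\varphi(x)-\varphi(y)$ and $\theta(x)-\theta(y)$ stay within $[-\eta_{\rm cvx},\eta_{\rm cvx}]$ when $x,y\in B(x_0,R)$ and $R<R_{\rm outer}$ is small.

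\emph{Step 1 (competitor).} Define $w=e^{\i(\varphi+\frac12(\theta_h+\theta_{-h}))}$ on $B(x_0,R)$ and $w=u$ elsewhere. Since $\theta_{\pm h}$ are supported in $B(x_0,\rho)$ and continuous, $w$ is a continuous $\S^1$-map homotopic to $u$, so $\deg w=\deg u$. Minimality gives $E_s(w)\ge E_s(u)$. Combining with \eqref{eq:energyestimateucircpsih},
\[
E_s(u\circ\psi_h)+E_s(u\circ\psi_{-h})-2E_s(w)\le 2C_g\rho^{-2}\lambda h^2 .
\]

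\emph{Step 2 (near-diagonal convexity).} For $x,y\in B(x_0,R)$, set $\xi=\varphi(x)-\varphi(y)+\theta_h(x)-\theta_h(y)$ and $\zeta$ the analogue with $-h$. Applying \eqref{eq:convexity1st}, the integrand difference is at least $c_{\rm cvx}|(\theta_h-\theta_{-h})(x)-(\theta_h-\theta_{-h})(y)|^p/|x-y|^2$. Here I am aware that the estimate produces $\theta_h-\theta_{-h}$ rather than $\theta_h$. I would instead run the comparison with competitors $u$ and $u\circ\psi_h$ and midpoint $e^{\i(\varphi+\theta_h/2)}$, using $E_s(u\circ\psi_h)-E_s(u)\le Ch^2\rho^{-2}$ and $E_s(\text{midpoint})\ge E_s(u)$. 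Then the left-hand sum is $\le Ch^2\rho^{-2}$, and the convexity yields $c\,[\theta_h]^p$ over $B(x_0,R)^2$. This is the version I would use.

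\emph{Step 3 (far interactions).} The pairs with $x\in B(x_0,\rho)$ and $y\notin B(x_0,R)$ (and symmetric ones) are the main obstacle, since lifts are not available and differences may be large. I would use the global semiconvexity \eqref{eq:convexityglobal} with $\omega=u(y)$, $\xi=\varphi(x)+\theta_h(x)$, $\zeta=\varphi(x)$. This gives a lower bound $-C_{sc}|\theta_h(x)|^2/|x-y|^2$. Integrating over $|x-y|\ge R-\rho$ (which holds since $x\in B(x_0,\rho)$ and $y\notin B(x_0,R)$) gives
\[
\int_{B(x_0,\rho)}|\theta_h|^2\int_{|x-y|\ge R-\rho}\frac{\dd y}{|x-y|^2}\aleq\frac{1}{R-\rho}\int_{B(x_0,\rho)}|\theta_h|^2 .
\]
Pairs with both points outside $B(x_0,R)$ contribute zero. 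Pairs with $x\in B(x_0,R)\setminus B(x_0,\rho)$ and $y\notin B(x_0,R)$ also vanish, since $\theta_h(x)=0$ there.

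\emph{Step 4 (conclusion).} Summing the three regions gives $c_{\rm cvx}\iint_{B(x_0,R)^2}\frac{|\theta_h(x)-\theta_h(y)|^p}{|x-y|^2}\le C\rho^{-2}h^2+\frac{C}{R-\rho}\int_{B(x_0,\rho)}|\theta_h|^2$, which is \eqref{eq:localized-superquadratic-theta-estimate}. The separation condition $\rho/(R-\rho)\le\varepsilon_{\rm sep}$ is used only to ensure that the midpoint lift is well defined and stays in the convexity range near $\partial B(x_0,R)$.
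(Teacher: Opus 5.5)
Once you discard the symmetric $\pm h$ competitor of Step~1 (which, as you noticed, only controls $\theta_h-\theta_{-h}$), your argument is the paper's: compare $u$, $u\circ\psi_h$ and the lifted midpoint $e^{\i(\varphi+\theta_h/2)}=e^{\i\frac{\varphi+\varphi\circ\psi_h}{2}}$, bound $E_s(u)+E_s(u\circ\psi_h)-2E_s(\text{midpoint})\le C\rho^{-2}h^2$ via minimality and \eqref{eq:energyestimateucircpsih}, then apply \eqref{eq:convexity1st} on $B(x_0,R)^2$ and \eqref{eq:convexityglobal} with $\omega=u(y)$ on the cross terms. One minor correction: the separation hypothesis $\rho/(R-\rho)\le\varepsilon_{\rm sep}$ plays no role in this argument --- the midpoint is well defined simply because $\theta_h=0$ on $B(x_0,R)\setminus B(x_0,\rho)$, and the convexity range comes from taking $R_{\rm outer}$ small.
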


\begin{proof}
Choose $R_{{outer}}$ smaller than the lifting radius from
\Cref{lem:uniform-small-arc-lifting}, smaller than the geometric radius in
\Cref{lem:uniform-inner-variations}, and small enough that
\[
        C_{{lift}}(1+\lambda)R_{{outer}}^\alpha
        \le
        \eta_{{cvx}},
\]
where $\eta_{{cvx}}$ is from \Cref{lem:superquadratic-chordal-convexity}. By \Cref{lem:uniform-small-arc-lifting}, there is a lift $u=e^{\i\varphi}$ on $B(x_0,R)$ and
\[
        [\varphi]_{C^\alpha(B(x_0,R))}
        \le
        C_{{lift}}(1+\lambda).
\]
Since
\[
        |h|\le c\rho,
        \qquad
        \left|\frac{dX}{d\ell}\right|\le \frac C\rho,
\]
choosing $c$ small gives
\[
        |\psi_h(x)-\psi_h(y)|\le C|x-y|
        \qquad
        \forall x,y\in B(x_0,R).
\]
Thus, it holds
\begin{equation}
\label{eq:local-phase-smallness-superquadratic}
\begin{aligned}
        &|\varphi(x)-\varphi(y)|
        +
        |\varphi(\psi_h(x))-\varphi(\psi_h(y))| \le
        C_{{lift}}(1+\lambda)|x-y|^\alpha
        \le
        \eta_{{cvx}}.
\end{aligned}
\end{equation}
for all $x,y\in B(x_0,R)$.

Since $\operatorname{spt}X\Subset B(x_0,\rho)$, one has
\[
        \psi_h(x)=x
        \qquad
        \forall x\in B(x_0,R)\setminus B(x_0,\rho).
\]
Hence it holds
\[
        \theta_h(x)=0
        \qquad
        \forall x\in B(x_0,R)\setminus B(x_0,\rho).
\]
We define $v_h\coloneqq u\circ\psi_h$. On $B(x_0,R)$, we obtain $v_h(x)=e^{\i\varphi(\psi_h(x))}$.
We define for $x\in \S^1$,
\[
        m_h(x)\coloneqq
        \begin{cases}
        	\displaystyle e^{\i\frac{\varphi(x)+\varphi(\psi_h(x))}{2}} & \text{ if }x\in B(x_0,R),\\[2mm]
			u(x) & \text{ if }x\in\S^1\setminus B(x_0,R).
		\end{cases}
\]
Since $\theta_h=0$ near $\partial B(x_0,R)$, $m_h$ is globally well-defined. The map $v_h$ has the same degree as $u$ because $h \mapsto v_h$ is a homotopy, similarly the map $m_h$. By minimality, we have
$$
E_s(u)\le \min\Big( E_s(v_h), E_s(m_h) \Big)
$$
By \Cref{lem:uniform-inner-variations}, using $E_s(u)\le\lambda$, we obtain
\[
        E_s(v_h)-E_s(u)
        \aleq_\lambda
        \rho^{-2}h^2.
\]
Therefore, we end up with the estimate
\begin{equation}
\label{eq:local-superquadratic-deficit-upper}
        E_s(u)+E_s(v_h)-2E_s(m_h)
        \le
        C\rho^{-2}h^2.
\end{equation}
For $x,y\in B(x_0,R)$, using the lift, it holds
\begin{align*}
	& |u(x)-u(y)|^p = \left|e^{\i(\varphi(x)-\varphi(y))}-1\right|^p,\\[2mm]
	& |v_h(x)-v_h(y)|^p = \left|e^{\i(\varphi(\psi_h(x))-\varphi(\psi_h(y)))}-1\right|^p,\\[2mm]
	& |m_h(x)-m_h(y)|^p = \left| e^{\i\frac{\varphi(x)-\varphi(y)+\varphi(\psi_h(x))-\varphi(\psi_h(y))}2}
	-1 \right|^p.
\end{align*}
By \eqref{eq:local-phase-smallness-superquadratic} and
\Cref{lem:superquadratic-chordal-convexity}, specifically
\eqref{eq:convexity1st},
\begin{equation}
\label{eq:local-superquadratic-interior-lower}
\begin{aligned}
        &|u(x)-u(y)|^p
        +
        |v_h(x)-v_h(y)|^p
        -
        2|m_h(x)-m_h(y)|^p
        \\[2mm]
        &\qquad\ge
        c
        \left|
        \varphi(\psi_h(x))-\varphi(\psi_h(y))
        -
        \varphi(x)+\varphi(y)
        \right|^p
        \\[2mm]
        &\qquad=
        c|\theta_h(x)-\theta_h(y)|^p .
\end{aligned}
\end{equation}

Now take $ x\in B(x_0,R)$ and $y\in \S^1\setminus B(x_0,R)$.
If $x\in B(x_0,R)\setminus B(x_0,\rho)$, then $\psi_h(x)=x$, hence $v_h(x)=m_h(x)=u(x)$, and the contribution is zero. Thus only $x\in B(x_0,\rho)$ matters. For such $x$ and $y$,
$v_h(y)=m_h(y)=u(y)$, in particular $\theta_h(y) = 0$. Thus, by \Cref{lem:superquadratic-chordal-convexity}, specifically
\eqref{eq:convexityglobal},
\begin{equation}
\label{eq:local-superquadratic-cross-lower}
\begin{aligned}
        &|u(x)-u(y)|^p
        +
        |v_h(x)-u(y)|^p
        -
        2|m_h(x)-u(y)|^p \ge
        -C|\theta_h(x)|^2 .
\end{aligned}
\end{equation}

Combining \eqref{eq:local-superquadratic-deficit-upper},
\eqref{eq:local-superquadratic-interior-lower}, and
\eqref{eq:local-superquadratic-cross-lower}, and using symmetry of the cross terms, we obtain
\[
\begin{split}
        &\int_{B(x_0,R)}\int_{B(x_0,R)}
        \frac{|\theta_h(x)-\theta_h(y)|^p}{|x-y|^2}
        \dd\ell(x)\dd\ell(y)
        \\[2mm]
        \aleq &\int_{B(x_0,R)}\int_{B(x_0,R)}
        \frac{|u(x)-u(y)|^p}{|x-y|^2}
        \dd\ell(x)\dd\ell(y)
        +
        \int_{B(x_0,R)}\int_{B(x_0,R)}
        \frac{|v_h(x)-v_h(y)|^p}{|x-y|^2}
        \dd\ell(x)\dd\ell(y)\\[2mm]
        & - 2\int_{B(x_0,R)}\int_{B(x_0,R)} \frac{|m_h(x)-m_h(y)|^p}{|x-y|^2} \dd\ell(x)\dd\ell(y)\\[2mm]
        =& E_s(u)+E_s(v_h)-2E_s(m_h)\\[2mm]
        &-2\int_{\S \setminus B(x_0,R)}\int_{B(x_0,\rho)} \frac{|u(x)-u(y)|^p}{|x-y|^2} \dd\ell(x)\dd\ell(y)
        + 2\int_{\S \setminus B(x_0,R)}\int_{B(x_0,\rho)} \frac{|v_h(x)-v_h(y)|^p}{|x-y|^2} \dd\ell(x)\dd\ell(y)\\[2mm]
        & - 4\int_{\S \setminus B(x_0,R)}\int_{B(x_0,\rho)} \frac{|m_h(x)-m_h(y)|^p}{|x-y|^2} \dd\ell(x)\dd\ell(y)\\[2mm]
        \leq&C\rho^{-2}h^2 + C\int_{B(x_0,\rho)} \int_{\S^1\setminus B(x_0,R)} \frac{|\theta_h(x)|^2}{|x-y|^2} \dd\ell(y)\dd\ell(x).
\end{split}
\]
For $x\in B(x_0,\rho)$, it holds
\[
        \int_{\S^1\setminus B(x_0,R)}
        \frac{\dd\ell(y)}{|x-y|^2}
        \le
        \frac{C}{R-\rho}.
\]
Therefore, we have
\[
\begin{aligned}
        &\int_{B(x_0,R)}\int_{B(x_0,R)}
        \frac{|\theta_h(x)-\theta_h(y)|^p}{|x-y|^2}
        \dd\ell(x)\dd\ell(y) \le
        C\rho^{-2}h^2
        +
        \frac{C}{R-\rho}
        \int_{B(x_0,\rho)}|\theta_h(x)|^2\dd\ell(x).
\end{aligned}
\]
This proves \eqref{eq:localized-superquadratic-theta-estimate}.
\end{proof}

The following is the main gain in regularity bootstrap:
\begin{proposition}
\label{lem:local-bootstrap-step}
Let $s \in (0,\frac12)$, $p=\frac1s$. Let $u$ be as in \Cref{th:minH12}. Assume that for some $\gamma \in (0,1)$
\[
        [u]_{\mathcal N^{\gamma,p}}<\infty.
\]
Then for every $\mu \in (0,1)$ such that
\[
        0<\mu<\min\left\{s+\frac{2\gamma}{p},1\right\},
\]
one has
\[
        [u]_{\mathcal N^{\mu,p}}<\infty.
\]
\end{proposition}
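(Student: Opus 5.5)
We argue locally and then cover $\S^1$ by finitely many arcs. By \Cref{th:heolderreg}, $u\in C^\alpha$ for some $\alpha>0$. Hence we may fix $\lambda$ with $[u]_{C^\alpha}+[u]_{W^{s,p}}+E_s(u)\le\lambda$. Since $u$ is fixed, the constants may depend on $u$; this sidesteps the lack of uniform bounds noted after \Cref{th:heolderreg}.

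Fix $x_0$ and radii $R<R_{\rm outer}$ and $\rho$ with $\rho/(R-\rho)\le\eps_{\rm sep}$ and $r=\rho/4$, all comparable to a fixed small scale. Take the lift $u=e^{\i\varphi}$ on $B(x_0,R)$ from \Cref{lem:uniform-small-arc-lifting} and a cutoff $X$ as in \Cref{prop:localized-superquadratic-difference-quotient}. For $x\in B(x_0,r)$ we have $\psi_h(x)=e^{\i h}x$, so $\theta_h(x)=\varphi(e^{\i h}x)-\varphi(x)$ is exactly the difference quotient of the phase, and $|u(e^{\i h}x)-u(x)|\approx|\theta_h(x)|$.

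\textbf{Step 1 (right-hand side of \eqref{eq:localized-superquadratic-theta-estimate}).} We need $\int_{B(x_0,\rho)}|\theta_h|^2$ for $\theta_h(x)=\varphi(e^{\i hX(x)}x)-\varphi(x)$, where the shift $hX(x)$ varies with $x$. Since $X$ is smooth and bounded, we can compare with translations by writing $\theta_h(x)$ as a difference at displacement $|hX(x)|\le C|h|$. We then control it by the Nikol'skii seminorm, averaging over displacements $t\in[0,C|h|]$ if necessary, to get
\[
\int_{B(x_0,\rho)}|\theta_h|^p\aleq |h|^{\gamma p}[u]_{\mathcal N^{\gamma,p}}^p .
\]
The phase differences are comparable to chordal differences because the lift is Hölder with small oscillation. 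Since $p>2$, Hölder's inequality on the bounded arc gives
\[
\int_{B(x_0,\rho)}|\theta_h|^2\aleq |h|^{2\gamma}.
\]
So the right-hand side of \eqref{eq:localized-superquadratic-theta-estimate} is $\aleq h^2+|h|^{2\gamma}\aleq |h|^{2\gamma}$ for $|h|<1$.

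\textbf{Step 2 (from a $W^{s,p}$ bound on $\theta_h$ to the gain).} Restrict the left-hand side of \eqref{eq:localized-superquadratic-theta-estimate} to $B(x_0,r)$, where $\theta_h=\tau_h\varphi-\varphi$. This gives
\[
[\tau_h\varphi-\varphi]_{W^{s,p}(B(x_0,r))}^p\aleq |h|^{2\gamma},
\]
that is, $[\tau_h\varphi-\varphi]_{W^{s,p}}\aleq |h|^{2\gamma/p}$. Thus we have "$s$ derivatives" of a first difference of size $|h|^{2\gamma/p}$. The standard Nikol'skii/Besov iteration (as in \cite{BL17}, e.g.\ their Lemma 2.3/2.6 mechanism) converts this into a second-order bound. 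Applying \Cref{lem:Sobolev-implies-Nikolskii} to $\tau_h\varphi-\varphi$ with increment $h$ gives
\[
\|\tau_h^2\varphi-2\tau_h\varphi+\varphi\|_{L^p}\aleq |h|^{s+2\gamma/p}.
\]
Since $s+2\gamma/p<\mu$ is not needed, but only $\mu<\min\{s+2\gamma/p,1\}$, the classical Marchaud-type lemma applies: a second difference bound of order $\sigma<2$ implies a first difference bound of order $\min(\sigma,1)^-$. This yields $[\varphi]_{\mathcal N^{\mu,p}(B(x_0,r/2))}<\infty$ for every $\mu<\min\{s+2\gamma/p,1\}$. Here the localization loss is handled by restricting to a smaller ball together with the $L^p$ bound of $\varphi$. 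Passing back from $\varphi$ to $u=e^{\i\varphi}$ is a Lipschitz operation, and covering $\S^1$ by finitely many balls $B(x_0,r/2)$ (with $|h|\ge r/4$ handled trivially by the $L^\infty$ bound) gives $[u]_{\mathcal N^{\mu,p}}<\infty$.

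\textbf{Main obstacle.} I expect the delicate point to be Step 1: $\theta_h$ comes from the non-rigid diffeomorphism $\psi_h$, so its control by the translation-based Nikol'skii seminorm requires a change of variables argument. There one uses that $x\mapsto \psi_h(x)$ has Jacobian $1+O(h/\rho)$ and writes $\varphi(\psi_h x)-\varphi(x)$ through the translation by $hX(x)$. A secondary technical issue is stating the second-difference (Marchaud) step cleanly with strict inequality in $\mu$.
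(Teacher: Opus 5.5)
Your proposal is correct and follows the paper's route. You bound the right-hand side of \eqref{eq:localized-superquadratic-theta-estimate} by $|h|^2+|h|^{2\gamma}$ using the Nikol'skii hypothesis and H\"older's inequality (with $p>2$), read the left-hand side as a $W^{s,p}$ bound of order $|h|^{2\gamma/p}$ on first differences, and convert this into $\mathcal N^{\mu,p}$ regularity through a second-difference (Marchaud-type) argument as in \cite{BL17}; beyond this, you supply details the paper leaves implicit: averaging plus a change of variables to handle the variable shift $hX(x)$, restricting to $B(x_0,r)$ where $\theta_h$ is a genuine translation difference, and the final covering argument.
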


\begin{proof}
Fix $x_0\in\S^1$. Choose $R$ suitably small so that \Cref{prop:localized-superquadratic-difference-quotient} is applicable, then we have from \eqref{eq:localized-superquadratic-theta-estimate} for $\theta_h$ as in \Cref{prop:localized-superquadratic-difference-quotient},
\[
\begin{aligned}
        &\int_{B(x_0,R)}\int_{B(x_0,R)}
        \frac{|\theta_h(x)-\theta_h(y)|^p}{|x-y|^2}
        \dd\ell(x)\dd\ell(y) \le
        C\rho^{-2}h^2
        +
        \frac{C}{R-\rho}
        \int_{B(x_0,\rho)}|\theta_h(x)|^2\dd\ell(x).
\end{aligned}
\]
Since $[u]_{\mathcal N^{\gamma,p}} < \infty$
this becomes
\[
\begin{aligned}
        &\int_{B(x_0,R)}\int_{B(x_0,R)}
        \frac{|\theta_h(x)-\theta_h(y)|^p}{|x-y|^2}
        \dd\ell(x)\dd\ell(y)
        \aleq_{R,u} \brac{|h|^2 + |h|^{2\gamma}}.
\end{aligned}
\]
Since this holds for all $B(x_0,R)$ and $R$ is uniform, we conclude that for all $|h| \ll 1$
\[
 [\delta_h u]_{W^{s,p}}^p \aleq \brac{|h|^2 + |h|^{2\gamma}}.
\]
So for $\mu \in (0,1)$, $\mu < s+\frac{2\gamma}{p}$ this implies $u \in \mathcal{N}^{\mu,p}$.
\end{proof}
\begin{proof}[Proof of \Cref{th:minH12}]
If $s\ge\frac12$ then the conclusion follows from the critical Sobolev embedding.
Assume now $\frac14<s<\frac12$, i.e.\ $p=\frac1s>2$. By \Cref{th:heolderreg}, $u\in C^\varepsilon(\S^1)$ for some $\varepsilon>0$. Let $\tilde{\gamma}_0\coloneqq s$ and
\[
        \tilde{\gamma}_{j+1}\coloneqq s+\frac{2\gamma_j}{p} = s +2s\gamma_j
\]
Then by induction
\[
        \tilde{\gamma}_j
        = \frac{s}{1-2s}
        -
        \left(
        \frac{s}{1-2s}-s
        \right)(2s)^j.
\]
In particular $\tilde{\gamma}_j$ is an increasing sequence since $2s < 1$, converging to $\frac{s}{1-2s}$. We have $\frac{s}{1-2s} > \frac{1}{2}$ if and only if $s > \frac{1}{4}$.
Thus, in the case $s \in (\frac{1}{4},\frac{1}{2})$ there exists an increasing sequence $\gamma_j$ with $\gamma_0 = s$ and
\[
 \gamma_{j+1}<s+\frac{2\gamma_j}{p}
\]
and for some finite $J \in \N$ we have $\gamma_J> \frac{1}{2}$.
Applying \Cref{lem:local-bootstrap-step} iteratively, we conclude that $[u]_{\mathcal{N}^{\gamma_J,p}} < \infty$.
Hence we conclude with \Cref{lem:Nikolskii-above-half-implies-Hhalf} that $u\in H^{1/2}(\S^1,\mathbb R^2)$.
\end{proof}

\section{Minimizers of any degree: Proof of Theorem~\ref{th:blaschkemin} and \ref{s:stability}}\label{s:proofs}
The backbone of our argument is the following representation formula.
\begin{proposition}\label{pr:Esvsid}
Let $u \in C^\infty(\S^1,\S^1)$ with $\deg u = d \geq 1$.
For any $s \in (0,1)$, we have 
\[
E_s(u)-\, E_s(z^d)
=
4\pi^2
\sum_{n=1}^{\infty}\gamma_{p,n}D(u^n),
\]
and the right-hand side is absolutely convergent. Here $D$ denotes the defect from \Cref{def:defect}, and $\gamma_{p,n}$ is defined in \Cref{pr:Fourierexpansion}.
\end{proposition}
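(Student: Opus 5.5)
The plan is to apply \Cref{la:PointwisePowerIdentity} pointwise inside the double integral, then use the quadratic decomposition \eqref{eq:hhvsdefect} for each power $u^n$. Since $u$ is smooth with values in $\S^1$, for all $x,y\in\S^1$ we have
\[
|u(x)-u(y)|^p=\frac12\sum_{n=1}^\infty \gamma_{p,n}|u^n(x)-u^n(y)|^2 .
\]
Dividing by $|x-y|^2$ and integrating over $\S^1\times\S^1$ gives formally
\[
E_s(u)=\frac12\sum_{n=1}^\infty\gamma_{p,n}E_{1/2}(u^n).
\]
By \Cref{la:degun}, $\deg u^n=nd\ge0$. Hence \eqref{eq:hhvsdefect} yields
\[
E_{1/2}(u^n)=4\pi^2 nd+8\pi^2D(u^n),
\]
and therefore
\[
E_s(u)=2\pi^2 d\sum_{n}n\gamma_{p,n}+4\pi^2\sum_n\gamma_{p,n}D(u^n).
\]
Applying the same computation to $u=z^d$ removes the first term. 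Indeed, $(z^d)^n=z^{nd}$ has only nonnegative Fourier modes, so $D(z^{nd})=0$, and thus $E_s(z^d)=2\pi^2 d\sum_n n\gamma_{p,n}$. Subtracting the two expansions gives the claimed identity. Alternatively, one can cross-check the constant with \Cref{la:energofzk}.

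The only real issue is justifying the interchange of sum and integral, together with absolute convergence. The $\gamma_{p,n}$ may change sign when $p>2$, so Tonelli is not available directly, and I would use Fubini instead. For that I need
\[
\sum_n|\gamma_{p,n}|\iint\frac{|u^n(x)-u^n(y)|^2}{|x-y|^2}\,\dx\,\dy=\sum_n|\gamma_{p,n}|E_{1/2}(u^n)<\infty .
\]
Since $u$ is Lipschitz, $|u^n(x)-u^n(y)|\le n\|u'\|_\infty|x-y|$, so the integrand is bounded by $\min(4,Cn^2|x-y|^2)/|x-y|^2$. This already gives $E_{1/2}(u^n)\aleq n$, which is exactly \Cref{la:Bvroughestimate}. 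Combined with \eqref{eq:nsquaregamman} for $\sigma=1<p$, the series $\sum_n|\gamma_{p,n}|\,n$ converges, which justifies Fubini.

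The same bound gives absolute convergence of the right-hand side, because $0\le D(u^n)\le E_{1/2}(u^n)\aleq n$. It also gives absolute convergence of $\sum_n n\gamma_{p,n}$, so the splitting of the series into the degree part and the defect part is legitimate.
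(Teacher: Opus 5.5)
Your proof is correct and follows the paper's argument. You use the pointwise identity from \Cref{la:PointwisePowerIdentity}, integrate to get $E_s(u)=\frac12\sum_n\gamma_{p,n}E_{1/2}(u^n)$, apply \eqref{eq:hhvsdefect} with $\deg u^n=nd$, and cancel the degree series against the same expansion for $z^d$. Your explicit Fubini justification (via $E_{1/2}(u^n)\aleq n$ and $\sum_n n|\gamma_{p,n}|<\infty$) fills in a step the paper leaves implicit, and computing $E_s(z^d)$ directly from $D(z^{nd})=0$ is a harmless variant of the paper's route through $E_s(\id)$ and \Cref{la:energofzk}.
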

\begin{proof}
The series on the right-hand side of the claimed is absolutely convergent: by \Cref{la:Bvroughestimate} we have $|D(u^n)| \aleq_{u} n$, and this implies convergence by \eqref{eq:nsquaregamman}.
By \Cref{la:PointwisePowerIdentity} we have
\[
|u(x)-u(y)|^p
=
\frac12
\sum_{n=1}^{\infty}
\gamma_{p,n}|u(x)^n-u(y)^n|^2.
\]
In particular for any $s = \frac{1}{p} \in (0,1)$, it holds
\begin{equation}\label{eq:EsinE12}
E_s(u)
=
\frac12
\sum_{n=1}^{\infty}
\gamma_{p,n}E_{1/2}(u^n).
\end{equation}
Combining \eqref{eq:hhvsdefect} and \Cref{la:degun}, we have $\deg(u^n)=nd$ and we find
\[
E_s(u)
=2\pi^2\, d\,
\sum_{n=1}^{\infty}
\gamma_{p,n}\, n
+
4 \pi^2
\sum_{n=1}^{\infty}
\gamma_{p,n}\, D(u^n).
\]
Observe that the first series converges for all $p \in (1,\infty)$ by \eqref{eq:nsquaregamman}.
On the other hand, for the identity, we have using \eqref{eq:EsinE12} and \Cref{la:energofzk}
\[
E_s(\id)
=
\frac12
\sum_{n=1}^{\infty}
\gamma_{p,n}E_{1/2}(\id^n) =
2\pi^2
\sum_{n=1}^{\infty}
\gamma_{p,n}  |n|
\]
and again convergence of this series follows for all $p \in (1,\infty)$ by \eqref{eq:nsquaregamman}.
Thus, we obtain 
\[
E_s(u) - d\, E_s(\id)
=4 \pi^2
\sum_{n=1}^{\infty}
\gamma_{p,n}\, D(u^n).
\]
We conclude with \Cref{la:energofzk}, $d\, E_s(\id)=E_s(z^d)$.
\end{proof}

The following in particular proves the first part of Theorem~\ref{th:blaschkemin}: $z^d$ (and all conformal transforms of it) are minimizers among all $W^{s,\frac{1}{s}}(\S^1,\S^1)$ maps of degree $d$. By density, it suffices to assume $u \in C^\infty(\S^1,\S^1)$.

\begin{corollary}\label{co:est}
Let $u \in C^\infty(\S^1,\S^1)$ with $\deg u = d$. If $p=\frac{1}{s} \in (1,2]$ then for some $c = c_{p}>0$ we have
\begin{equation}\label{eq:stability1}
E_s(u)-\, E_s(z^d) \geq c\, D(u) \geq 0.
\end{equation}
If $p \in (2,4)$ we have for some $c_p > 0$
\[
E_s(u)-\, E_s(z^d) \geq c_p (4D(u)-D(u^2)) \geq 0.
\]
Here $D$ denotes the defect from \Cref{def:defect}.
In particular, it holds
\begin{equation}\label{eq:zdismin}
 E_s(u) \geq E_s(z^d) \qquad \forall u \in C^\infty(\S^1,\S^1),\ \deg u = d
\end{equation}
\end{corollary}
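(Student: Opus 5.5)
The plan is to read everything off the representation formula of \Cref{pr:Esvsid}, which for smooth $u$ of degree $d\ge 1$ gives
\[
E_s(u)-E_s(z^d)=4\pi^2\sum_{n\ge1}\gamma_{p,n}D(u^n),
\]
and then to combine it with the sign information on $\gamma_{p,n}$ from \Cref{pr:Fourierexpansion} and the power inequality of \Cref{la:defect}.

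First I will reduce to $d\ge 0$ and check that $d=0$ is covered. The proof of \Cref{pr:Esvsid} only uses $\deg(u^n)=n\deg u$ (\Cref{la:degun}, valid for $\deg u\ge0$) and the decomposition \eqref{eq:hhvsdefect} (valid for $\deg\ge0$). So it applies verbatim to $d=0$, where $E_s(z^0)=0$. For $d<0$, I replace $u$ by $\bar u$. Conjugation is an isometry of the target, so $E_s(\bar u)=E_s(u)$, $\deg\bar u=-d>0$ and $\overline{z^{d}}=z^{-d}$. The estimates are then stated with the defect taken of $\bar u$, which is the natural convention for negative degree.

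\emph{Case $p\in(1,2]$.} For $1<p<2$, \eqref{eq:gammasign1lpl2} gives $\gamma_{p,n}>0$ for all $n$, and $D\ge0$ by definition. Discarding all terms with $n\ge2$ yields $E_s(u)-E_s(z^d)\ge 4\pi^2\gamma_{p,1}D(u)$. For $p=2$, \eqref{eq:gammasignpeq2} leaves only the $n=1$ term, so the difference equals exactly $8\pi^2D(u)$. This proves \eqref{eq:stability1} with $c_p=4\pi^2\gamma_{p,1}>0$.

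\emph{Case $p\in(2,4)$.} This is the main step, because the coefficients $\gamma_{p,n}$, $n\ge2$, are now negative. The key is the moment identity \eqref{eq:gammasignsumeeq0}, which lets me eliminate $\gamma_{p,1}$:
\[
\gamma_{p,1}=\sum_{n\ge2}n^2|\gamma_{p,n}|,
\]
using \eqref{eq:gammasign2lpl4}. Substituting this into the representation formula and regrouping gives
\[
E_s(u)-E_s(z^d)=4\pi^2\sum_{n\ge2}|\gamma_{p,n}|\bigl(n^2D(u)-D(u^n)\bigr).
\]
The regrouping is legitimate because both series converge absolutely. Indeed, $D(u^n)\lesssim_u n$ by \Cref{la:Bvroughestimate}, and $\sum n^2|\gamma_{p,n}|<\infty$ by \eqref{eq:nsquaregamman}, since $2<p$.

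By \Cref{la:defect}, every bracket $n^2D(u)-D(u^n)$ is nonnegative. Keeping only the $n=2$ term gives the claimed bound with $c_p=4\pi^2|\gamma_{p,2}|>0$, together with $4D(u)-D(u^2)\ge0$.

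Finally, \eqref{eq:zdismin} follows in both ranges because the right-hand sides are nonnegative. For $p=4$, which is needed in \Cref{th:blaschkemin}, I would only assert \eqref{eq:zdismin}. I believe the same regrouping works there, since \eqref{eq:gammasignsumeeq0} is stated for $p\le4$. However, the sign statement \eqref{eq:gammasign2lpl4} is only given for $p<4$, so I would check the signs of $\gamma_{4,n}$ directly from the explicit formula: for $n\ge3$ the second Gamma factor has a pole, so these coefficients vanish.
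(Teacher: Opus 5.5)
Your proposal is correct and follows the paper's proof essentially verbatim: the representation formula of \Cref{pr:Esvsid}, positivity of $\gamma_{p,n}$ for $p\le 2$, and for $p\in(2,4)$ subtracting $D(u)\sum_n n^2\gamma_{p,n}=0$ and using \Cref{la:defect} to keep only the $n=2$ term. Your constant $c_p=4\pi^2|\gamma_{p,2}|$ is the right one (the paper's text says $\gamma_{p,2}>0$, a sign slip since $\gamma_{p,2}<0$ for $p\in(2,4)$), and your treatment of $d\le 0$ is a needed clarification the paper omits: for $d<0$ the defect must be taken of $\bar u$, since literally $D(\bar z)=1$ while $E_s(\bar z)-E_s(z^{-1})=0$.
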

\begin{proof}
 From \Cref{pr:Esvsid} we have
 \[
  E_s(u) - E_s(z^d) =4\pi^2
\sum_{n=1}^{\infty}\gamma_{p,n}D(u^n).
 \]
If $p \in (1,2]$, \Cref{pr:Fourierexpansion} \eqref{eq:gammasign1lpl2} we know $\gamma_{p,n}\ge0$, and $\gamma_{p,1} > 0$, so \eqref{eq:stability1} holds.
If $p \in (2,4)$, recall that by \eqref{eq:gammasign2lpl4}
\[
\gamma_{p,1}>0,
\qquad
\gamma_{p,n}\le0
\qquad\text{for }n\ge2.
\]
By \eqref{eq:gammasignsumeeq0}, we obtain
\[
\begin{split}
\sum_{n=1}^{\infty}\gamma_{p,n}\, D(u^n) 
= \sum_{n=1}^{\infty}\gamma_{p,n} \brac{D(u^n)-n^2 D(u)}
&= \sum_{n=2}^{\infty}\gamma_{p,n} \brac{D(u^n)-n^2 D(u)}.
\end{split}
\]
Since $\gamma_{p,n} \leq 0$ for $n \geq 2$ and $D(u^n)-n^2 D(u) \leq 0$ by \Cref{la:defect}, we obtain
\[
\begin{split}
&\sum_{n=1}^{\infty}\gamma_{p,n}D(u^n) 
\geq \gamma_{p,2}\, \brac{4 D(u)-D(u^2)}.
\end{split}
\]
Since $\gamma_{p,2}>0$ we conclude. The inequality \eqref{eq:zdismin} now follows since $D(u) \geq 0$ and by $4 D(u)-D(u^2)\geq 0$ (the latter by \Cref{la:defect}).
\end{proof}

The next corollary shows the second part of Theorem~\ref{th:blaschkemin}: any minimizer of $E_s$ among degree $d$ maps must be a Blaschke product
\begin{corollary}
Assume $s\in (\frac{1}{4},1)$.
Assume $u \in W^{s,\frac{1}{s}}(\S^1,\S^1)$ minimizes $E_s$ among degree $d$ maps. Then $u\in \mathscr{B}_d^+$.
\end{corollary}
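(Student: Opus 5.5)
We argue for $d\ge 1$; the case $d\le -1$ follows by complex conjugation, and for $d=0$ a minimizer has zero energy and is therefore constant. The first step is regularity. By \Cref{th:minH12}, a minimizer $u$ lies in $H^{\frac12}(\S^1,\R^2)$, and hence $D(u^n)$ is defined and finite for every $n$. (For $s\ge\frac12$ this already follows from Sobolev embedding.) By \eqref{eq:zdismin} and density of smooth maps, $E_s(u)=E_s(z^d)$. Indeed, $z^d$ is admissible, so $E_s(u)\le E_s(z^d)$, while approximation gives the reverse inequality.

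The second step extends the lower bounds of \Cref{co:est} from smooth maps to $u$. Take smooth degree-$d$ maps $u_k\to u$ in $W^{s,p}\cap H^{\frac12}$. Such maps exist: since $u\in H^{1/2}\cap W^{s,p}$ is VMO, the standard mollify-and-project approximation converges in both norms. Then $E_s(u_k)\to E_s(u)$. By \Cref{la:continuityBuH12}, $D(u_k)\to D(u)$. Moreover $u_k^2\to u^2$ in $H^{\frac12}$, because $|u_k|=1$ and $H^{1/2}\cap L^\infty$ is an algebra, so $D(u_k^2)\to D(u^2)$ as well. Passing to the limit in \Cref{co:est} gives two cases:
\[
0=E_s(u)-E_s(z^d)\ge c_p D(u) \quad (1<p\le 2),\qquad 0\ge c_p\bigl(4D(u)-D(u^2)\bigr)\quad (2<p<4).
\]
In the first case $D(u)=0$. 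In the second case $D(u^2)\ge 4D(u)$, and combined with \Cref{la:defect} this gives equality $D(u^2)=4D(u)$. That lemma is stated for $W^{\frac12,2}$ maps and holds after approximation. Its equality clause with $n=2$ then shows that $u$ is a Blaschke product.

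The endpoint $p=4$, where $\gamma_{4,n}=0$ for $n\ge3$, needs a separate check. A direct computation gives $|e^{\i\theta}-1|^4 = 8(1-\cos\theta)-2(1-\cos2\theta)+\dots$, and the moment identity \eqref{eq:momentidentities} forces $\gamma_{4,1}=-4\gamma_{4,2}>0$. The same argument then applies, with the sum reducing to $\gamma_{4,2}(D(u^2)-4D(u))$ and $\gamma_{4,2}<0$.

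The final step applies \Cref{la:zero-or-equality-defect-blaschke}: $D(u)=0$ means $u$ equals a.e. a finite Blaschke product, and its degree must be $d$, so $u\in\mathscr B_d^+$.

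I expect the main obstacle to be the limit passage in the second step. It requires approximating $u$ by smooth $\S^1$-valued maps of the same degree simultaneously in $W^{s,p}$ and $H^{\frac12}$, so that both $E_s$ and the defects $D(u^n)$ converge. Producing the $H^{1/2}$ regularity needed for this is exactly the role of \Cref{th:minH12}, and it is where the restriction $s>\frac14$ enters.
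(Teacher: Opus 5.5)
Your proposal is correct and follows essentially the same route as the paper: $H^{1/2}$-regularity (Sobolev embedding for $p\le 2$, \Cref{th:minH12} for $2<p<4$), then smooth approximation with \Cref{la:continuityBuH12} to pass to the limit in \Cref{co:est}, and finally either the zero-defect lemma or the equality case $D(u^2)=4D(u)$ of \Cref{la:defect}. Your separate paragraph on $p=4$ is unnecessary, since $s>\frac14$ means $p<4$, and \Cref{th:minH12} would not be available at $s=\frac14$ anyway.
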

\begin{proof}
Let first \underline{$p \in (1,2]$}. Then $u \in H^{\frac{1}{2}}(\S^1,\S^1)$ by Sobolev embedding, thus $D(u)$ is well-defined and continuous, by \Cref{la:continuityBuH12}. Thus, by smooth approximation from \Cref{co:est}, \eqref{eq:stability1},
\[
 D(u) = \limsup_{k \to \infty} D(u_k) =0.
\]
Consequently, $u$ is a Blaschke product by \Cref{la:zero-or-equality-defect-blaschke}.

Now assume \underline{$p \in (2,4)$}. Observe that $D(u)$ may not be well-defined for $u \in W^{s,\frac{1}{s}}$ when $s<\frac{1}{2}$, indeed this is related to \cite[Open problem 5.5]{BFavorite}. However, since $u$ is a minimizer, $u \in H^{\frac{1}{2}}(\S^1,\S^1)$. See \Cref{th:minH12}.
\footnote{Here the threshold $p=4$ is the blocking ingredient for studying the case $p>4$.}. Thus, again by smooth approximation, \Cref{la:continuityBuH12}, and \Cref{co:est} we conclude
\[
 D(u^2) = 4D(u)
\]
Thus we conclude by the equality case in \Cref{la:defect}.
\end{proof}

Lastly, \Cref{s:stability} follows from the following, combined with \Cref{co:est} and smooth approximation observing that we can apply \Cref{la:continuityBuH12} by Sobolev embedding.

\begin{corollary}[Local and global stability for $p<2$]
Let $p = \frac{1}{s} \in (1,2)$ and $u \in W^{s,\frac{1}{s}}(\S^1,\S^1)$.
\begin{itemize}
\item If $\deg u = 1$, then there exists a
constant $C_p>0$ such that
\[
\inf_{B_0\in{\rm B}_1^+}
\|u-B_0\|_{\dot H^{\frac12}(\S^1)}^2
\le
C\, D(u) .
\]
\item if $\deg u =d\geq 2$, we have the same inequality if in addition the assumption
\[
\inf_{B_0\in{\rm B}_d^+}\|u-B_0\|_{\dot H^{\frac12}(\S^1)}^2 < \eps_{d,p}
\]
for a small constant $\eps_{d,p}>0$
\end{itemize}
\end{corollary}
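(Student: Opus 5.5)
This corollary needs no new analysis: it is the $\dot H^{\frac12}$-stability of \Cref{la:B-controls-Blaschke} (from \cite{DSW23}) applied to $u$, once we check that $u$ is admissible there. The energy comparison of \Cref{co:est} enters only in deducing \Cref{s:stability} afterwards, through $c_p D(u)\le E_s(u)-E_s(z^d)$.

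First we check that $u\in \dot H^{\frac12}(\S^1,\S^1)$. Since $p=\frac1s\in(1,2)$ and $s\cdot\frac1s=1=\frac12\cdot 2$, the critical fractional Sobolev embedding on the circle gives $W^{s,\frac1s}\hookrightarrow W^{\frac12,2}$. Hence $u$ has finite $H^{\frac12}$-seminorm and is a $\dot H^{\frac12}$-map into $\S^1$.

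Next we check that $D(u)$ is well defined. It is given by \Cref{def:defect} through the Fourier coefficients. Its continuity along smooth approximations $u_k\to u$ in $W^{s,\frac1s}$, and hence in $\dot H^{\frac12}$, follows from \Cref{la:continuityBuH12}. The degree is also preserved along such approximations, since it is continuous in $\dot H^{\frac12}$ (VMO).

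With these two facts, the two cases follow directly. If $\deg u=1$, the first bullet of \Cref{la:B-controls-Blaschke} gives $\inf_{B_0}\|u-B_0\|^2_{\dot H^{\frac12}}\le C\,D(u)$ with a universal constant; we may take $C_p=C$. If $d\ge2$, we take $\eps_{d,p}\coloneqq\eps_d$ from \eqref{hyp:smallness}. Under the stated smallness hypothesis, the second bullet of \Cref{la:B-controls-Blaschke} then yields the same inequality.

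Should the lemma only be available for smooth maps, we would argue by approximation. We approximate $u$ by smooth $u_k$ of the same degree. Both sides of the inequality are continuous in $\dot H^{\frac12}$: the left side because the infimum of distances is $1$-Lipschitz in $u$, the right side by \Cref{la:continuityBuH12}. For $d\ge2$, the smallness assumption is an open condition, so it passes to $u_k$ for large $k$. Passing to the limit then gives the claim for $u$.

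The only genuinely delicate point is this approximation step. We must make sure that $\dot H^{\frac12}$-convergence is actually available, which is exactly why the restriction $p<2$ is needed, and that the degree is stable under the approximation. Both facts are standard, so we expect no real obstruction.
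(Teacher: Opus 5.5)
Your proposal is correct and follows the paper's route: the corollary is \Cref{la:B-controls-Blaschke} applied to $u$, which is admissible because for $p<2$ the critical embedding $W^{s,\frac1s}\hookrightarrow W^{\frac12,2}$ puts $u$ in $\dot H^{\frac12}$, so $D(u)$ and $\deg u$ make sense. You are also right that \Cref{co:est}, which the paper cites alongside, only serves to pass from $D(u)$ to the energy deficit $E_s(u)-E_s(z^d)$; your fallback approximation argument is harmless but unnecessary, since \Cref{la:B-controls-Blaschke} is already stated for $\dot H^{\frac12}$ maps.
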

\begin{proof}
Since $p \in (1,2)$ this follows from \Cref{co:est} and \Cref{la:B-controls-Blaschke}.
\end{proof}

	\bibliographystyle{abbrv}%
	\bibliography{bib}%
\end{document}